\documentclass[12pt]{article}

\usepackage{graphicx} 
\usepackage{subcaption}
\usepackage[margin=1in]{geometry}
\usepackage{amsfonts,amsmath, amsthm}
\usepackage{cases,verbatim}
\usepackage{textalpha}
\usepackage{color, hyperref}
\usepackage{enumitem}

\newcommand{\X}{X}
\newcommand{\Y}{Y}
\newcommand{\truesol}{u^\dagger}    
\newcommand{\noisydata}{f^\delta}   
\newcommand{\op}{K}              

\newcommand{\fidelitynoisy}{H_{f^{\delta}}}
\newcommand{\de}{^{\dagger}}

\usepackage{algorithm}
\usepackage{algpseudocode}

\usepackage{pgfplotstable}
\usepackage{booktabs}
\usepackage{longtable}
\usepackage{pdflscape}
\usepackage{geometry}

\usepackage{xcolor}
\usepackage{colortbl}

\usepackage[maxbibnames=99, giveninits=true]{biblatex}

\renewbibmacro{in:}{}
\renewbibmacro*{issue+date}{%
  \setunit{\addcomma\space}
    \iffieldundef{issue}
      {\usebibmacro{date}}
      {\printfield{issue}%
       \setunit*{\addspace}%
       \usebibmacro{date}}
      \newunit}
\newtheorem{remark}{Remark}
\title{\textbf{Primal–dual methods and acceleration for Morozov and equality constrained regularization}}
\author{
Diana-Elena Mirciu\\
Department of Mathematics, University of Klagenfurt, Austria\\
\texttt{diana-elena.mirciu@aau.at}
\and
Martin Benning\\
Department of Computer Science, University College London, United Kingdom\\
\texttt{martin.benning@ucl.ac.uk}
\and
Elena Resmerita\\
Department of Mathematics, University of Klagenfurt, Austria\\
\texttt{elena.resmerita@aau.at}
}
\date{\today}

\DeclareMathOperator*{\argmin}{\arg \min}
\newcommand{\mb}[1]{\textcolor{blue}{Martin: #1}}


\theoremstyle{plain}
\newtheorem{theo}{Theorem}[section]
\newtheorem{lemma}[theo]{Lemma}
\newtheorem{example}[theo]{Example}
\newtheorem{prop}[theo]{Proposition}

\usepackage{listings}
\usepackage{xcolor}

\usepackage{float} 

\usepackage{pgfplotstable} 
\pgfplotsset{compat=1.18}

\usepackage{mathtools}
\mathtoolsset{showonlyrefs}

\usepackage{titlesec}

\titleformat{\section}
  {\normalfont\Large\bfseries}{\thesection}{1em}{}

\titleformat{\subsection}
  {\normalfont\large\bfseries\itshape}{\thesubsection}{1em}{}

\titleformat{\subsubsection}
  {\normalfont\normalsize\itshape}{\thesubsubsection}{1em}{}

\titleformat{\paragraph}[runin]
  {\normalfont\normalsize\bfseries\itshape}{\theparagraph}{1em}{}[.]

\titleformat{\subparagraph}[runin]
    {\normalfont\normalsize\itshape}{\thesubparagraph}{1em}{}[.]

\begin{document}

\maketitle
\begin{abstract}
This work develops a regularization analysis of non-accelerated and accelerated primal-dual methods for solving linear inverse problems in the presence of noisy data. We investigate a Condat-Vũ algorithm and an accelerated primal-dual hybrid gradient method in Hilbert spaces, with focus on quantifying the effect of data perturbations on the reconstruction error. For the non-accelerated scheme, we derive error estimates  in terms of Bregman distances, whereas for the accelerated scheme we establish error estimates in norm. The study accommodates a general class of convex data fidelities satisfying suitable perturbation conditions, which are verified explicitly for equality constrained and Morozov regularization. For the non-accelerated method, the analysis is further extended to Banach spaces, taking into account non-Euclidean geometries and including a particular non-reflexive setting tailored to  nonnegative solution reconstruction. The results recover known behavior in classical settings while extending the regularization analysis to these more general frameworks. Numerical experiments with representative regularizers, including sparsity and entropic models, support the theoretical findings and illustrate practical performance under noise.

\end{abstract}

\noindent \textbf{Keywords:} primal-dual splitting algorithms, accelerated PDHG, iterative regularization, Morozov regularization

\section{Introduction}
In many applications across imaging sciences, medical diagnostics, and engineering, one faces the fundamental challenge of recovering an unknown, true signal or image $u\de \in \X$ from indirect measurements in $\Y$, where $X$ and $Y$ are Hilbert spaces. We denote by $f$ the perfect data and by $f^\delta$ the noisy observation. The inverse model is written as
\begin{equation} \label{eq:inverse_problem}
    \op u\de = f, \qquad \| f - f^\delta \| \le \delta.
\end{equation}
Here, $\op \colon \X \to \Y$ is a bounded linear forward operator. In realistic scenarios, the perfect data $f$ is unavailable, and one only records $f^\delta$.

Inverse problems are inherently ill-posed~\cite{engl1996regularization}, which means  that a direct inversion of the operator may strongly amplify measurement noise, yielding unstable and practically unreliable reconstructions. To overcome this instability, one employs regularization methods that stabilize the inversion process by incorporating a priori knowledge about the unknown signal, usually encoded by some regularizer $J$. Therefore, our work considers the following  problem for some generically denoted penalty $J$, 
\begin{equation}\label{original}
\inf\limits_{u\in \X}J(u) \text{ such that } \op u=f.
\end{equation}

The classical and most widely used approach is the variational Tikhonov regularization. In this framework, an approximate solution is computed as a minimizer of a functional that balances a data fidelity term, which enforces consistency with noisy measurements, against a regularizer $J$, which promotes desired structural properties such as smoothness or sparsity~\cite{benning2018modern}. Despite its success, a key difficulty in variational regularization is the appropriate choice of the regularization parameter, which often requires solving the underlying optimization problem multiple times,  thus dealing with computational challenges in the case of large-scale, high-dimensional problems.

Alternatively, one may consider the equality constraint $\op u=f^\delta$ and solve the resulting problem using an efficient iterative scheme. In this case, iteration must be stopped  early enough to prevent convergence to inaccurate reconstructions caused by fitting noisy data \cite{molinari2024iterative}.

Morozov regularization (residual method) \cite{grasmair2011residual} solves the operator equation using the inequality constraint $\|Ku-f^\delta\|\leq\delta$. It provides an attractive alternative to Tikhonov regularization, since it  incorporates a residual bound determined by the noise level, instead of requiring the choice of a regularization parameter. The reader is referred to \cite{lorenz2013necessary} for a detailed comparison of several regularization strategies, including these two. Combined with an efficient iterative method, Morozov regularization can stably and accurately reconstruct the solution at lower computational cost than methods requiring repeated parameter selection.

Alongside variational methods whose designs include  explicit regularizers, iterative regularization exploits the stabilization effect of the iteration itself,  by terminating the process according to the noise level. The foundations of iterative regularization date back to the late 1970s, with \cite{bakushinskii1977methods,bakushinskii1979principle} developing  regularizing methods for monotone variational inequalities and \cite{thomas1979approximation} studying iterative approximations of generalized inverses. Comprehensive studies of the evolution of regularization theory, including iterative methods and their extensions to nonlinear problems and Banach space settings, can be found in the monographs \cite{engl1996regularization, kaltenbacher2008iterative, scherzer2009variational, schuster2012regularization, vogel2002computational, hansen2010discrete, ito2014inverse}.

The current work focuses on primal–dual splitting schemes for solving linear inverse problems. Although they have a  longer history, their recent developments in optimization and mathematical imaging have been strongly influenced by the primal-dual hybrid gradient (PDHG) method also known as the  Chambolle-Pock algorithm \cite{chambolle2011first}. It is a first-order splitting scheme for convex optimization problems with saddle-point formulation, which has been further developed by Condat and Vũ to include an additional smooth term with Lipschitz continuous gradient \cite{condat2013primal, vu2013splitting}. These procedures adjust the primal and dual variables separately, mainly requiring  evaluations of proximal mappings associated to possibly non-smooth functions and often amounting to closed-form updates.  Moreover, they perform operator splitting by decoupling the operator $K$ from the non-smooth terms appearing in the objective function. It is worth noting that, from an inverse problem perspective, the primal iterates approximate a desired solution, while the dual iterates approximate the source element appearing in the source condition and thus provide information relevant to convergence rate estimates.

\color{black}

More recently, the machine learning and optimization communities have shown increased interest in the implicit regularization properties of primal-dual methods, particularly for non-smooth and non-strongly convex penalties - \cite{matet2017don}, \cite{garrigos2018iterative}, \cite{calatroni2021accelerated}, \cite{villa2023implicit}, \cite{vega2024fast}, \cite{molinari2024iterative}, \cite{gao2025model}.  For further developments on primal–dual methods in related settings, we refer to \cite{hohage2014generalization}, \cite{valkonen2014primal}, \cite{jiang2023bregman}, \cite{huang2026adaptive}, \cite{darbon2021accelerated}, and to \cite{silveti2023stochastic} for a stochastic first-order primal–dual method. Note that the literature on primal-dual methods in the optimization community is vast, and a comprehensive review is beyond the scope of this work.


Our study extends previous works on the regularizing effect of certain primal-dual splitting methods, in order to stably deal with noisy data. It is devoted to (non-)accelerated primal–dual hybrid gradient  schemes in Hilbert spaces, as well as Condat–Vũ procedures adapted to non-Euclidean geometries in Banach spaces, such as simplex constraints via entropic mirror descent.
Although the considered methods and their analysis allow for general data fidelity terms, the complete theoretical results are established only for equality constrained and Morozov-type inequality constrained problems, where the required assumptions can be verified.

We next clarify the distinct roles played by the early stopping in the two constrained settings considered in this work.
In the equality constrained problem, convergence means exact fitting of the noisy data, so early stopping is essential for avoiding an unstable noisy limit. The situation is, however, somewhat different in the inequality constrained setting.
 For each fixed noise level, running the primal-dual procedures longer makes the iterates approach the solution of the corresponding noisy Morozov problem. That limit is already regularized, as known from the literature \cite{grasmair2011necessary}.
Independently, as the noise level tends to zero, the solutions of the Morozov problems approach a $J$-minimizing solution of the exact inverse problem. Therefore, fully solving each Morozov problem is theoretically stable. 
Our approach combines these two processes and stops the iterates  in order to balance  the error terms due to the optimization accuracy and the noise propagation. Therefore, in primal-dual schemes applied to the inequality constrained problem, early stopping is no longer the primary regularization mechanism, as in the equality constrained setting, but rather provides additional stability and improved accuracy.

Morozov constrained formulations have also been considered by \cite{haltmeier2024data} in the context of data-driven regularization, where a PDHG  scheme is used for the numerical solution, and by \cite{rigie2015joint} for multi-channel spectral CT reconstruction using a similar type of algorithm.

\subsection{Contributions}
Our main contributions are as follows. Firstly, we consider  the Condat-V\~u primal-dual splitting algorithm~\cite{condat2013primal, chambolle2011first} with noisy observations and general data fidelities verifying certain stability assumptions, and  derive convergence rates under the classical source condition. We establish these results first in a Hilbert space setting and subsequently extend them to general reflexive Banach spaces, as well as to a particular nonreflexive setting arising from entropic regularization, where  directional derivatives are used instead of gradients.  
Secondly, we study the accelerated primal-dual hybrid gradient algorithm in the presence of noisy data and obtain error estimates in Hilbert spaces. 
Finally, we validate these theoretical findings through numerical experiments and compare the observed behavior with the theoretical guaranties.

To our knowledge, our analysis is mainly complementary to the works \cite{molinari2024iterative}, \cite{calatroni2021accelerated}, and \cite{darbon2021accelerated}. On the one hand, \cite{molinari2024iterative} investigates the regularizing properties of primal–dual methods for the equality constrained problem under noisy data, but the analysis is restricted to the non-accelerated schemes. On the other hand,  \cite{calatroni2021accelerated} deals with acceleration under general data fidelities within a dual diagonal regularization framework, where optimization iterations employ a vanishing regularization parameter. Our approach instead applies accelerated PDHG directly to the primal–dual formulation, without introducing such an additional diagonal parameter. Last but not least, 
\cite{darbon2021accelerated}  establishes convergence for an accelerated variant of PDHG in Banach spaces within a non-Euclidean geometry and for a general class of convex data fidelities. However,  that analysis is carried out in reflexive spaces and considers only exact data.  

\subsection{Outline}


The remainder of this paper is organized as follows. Section~\ref{sect:Preliminaries} introduces the mathematical notation, defines the inverse problem setting, and recalls the necessary background on convex analysis. Section \ref{sect:particular-instances} shows ergodic convergence rates for the Condat-V\~u algorithm with noisy data in Hilbert spaces, while Section \ref{Acc-PDHG-section} analyzes an accelerated version of the PDHG method in a similar context. Examples of data fidelities that fit this framework are provided in Section \ref{sect:concrete-frameworks}.
Subsequently, Section \ref{sect:Condat-Vu-general} studies a Bregman version of the Condat-V\~u algorithm, tailored for reflexive Banach spaces, while Section \ref{ex:entropic_mirror_example} presents an entropic PDHG algorithm in a natural non-reflexive setting.
Additionally, in Section~\ref{sect:numerics}, we present numerical experiments, focusing on a sparsity problem and a hyperspectral abundance map recovery. Finally, Appendix~\ref{appendix:proofs} includes additional proofs and figures.

\section{Preliminaries}\label{sect:Preliminaries}
This section presents the notation, recalls the Fenchel duality background, and states a boundedness result for positive sequences, which is essential to proving  error estimates for the Condat-V\~u algorithm in a general setting of spaces.

Fix $p>1$ and assume the following:
    \begin{enumerate}[start=1,label={(A\arabic*)}]
    \item $\X, \Y$ are Hilbert spaces and the operator $\op :\X\rightarrow \Y$ is linear, bounded and ill-posed;\label{A1} 
    \item The data fidelities $H_f, H_{f^{\delta}}:\Y\rightarrow\mathbb{R}\cup\{+\infty\}$ are proper, convex and lower semicontinuous; \label{A2}
    \item The function $R:\X\rightarrow\mathbb{R}\cup\{+\infty\}$ is proper, convex and lower semicontinuous. \label{A3}
    \item The function $G:\X\rightarrow\mathbb{R}\cup\{+\infty\}$ is proper, convex, Fr\'echet differentiable and $L$-smooth, meaning that 
    $
        \forall u_1, u_2\in\operatorname{int\; dom}G, \quad D_G(u_1,u_2)\leq\frac{L}{p}\Vert u_1-u_2\Vert^p_\X,
    $ 
    where $D_G$ is the Bregman distance with respect to $G$. \label{A4}
    \item There exists $\bar{u}\in\operatorname{dom }(H_{f}\circ \op )\cap\operatorname{dom } R\cap\operatorname{dom } G$, such that $R+G$ is  continuous at $\bar{u}$.  \label{A5}
    \end{enumerate}

    In general, for a non-differentiable, convex function $g$, and two elements $x,y\in \operatorname{dom}g$ such that $\partial g(y)\neq\emptyset$, the Bregman distance with respect to $g$ and a subgradient $\xi\in\partial g(y)$ is given by $D_g(x,y) = g(x) - g(y) - \langle\xi, x-y\rangle.$ In addition, one can also work with the symmetric Bregman distance, 
    $D_g^s(x,y) := D_g(x,y)+D_g(y,x).$

We aim to solve 
\begin{equation}\label{intitial-pb-exact}
    \inf\limits_{u\in \X}\{ R(u)+G(u)\} \text{ such that } \op u=f,
\end{equation}
which can be equivalently formulated as 
\begin{equation}\label{optim}
    \inf\limits_{u\in \X}\{ H_{f}(\op u)+R(u)+G(u)\},
\end{equation}
with 
\begin{equation}\label{h_f-formula}
    H_f(z) = i_{\{f\}}(z).
\end{equation}
Here $i_C$ denotes the indicator function of a nonempty set $C$, i.e., it vanishes on $C$ and is $+\infty$ otherwise.

Although our study focuses on this specific choice of $H_f$, we formulate the assumptions for a general function $H_f$ to highlight the main ingredients underlying the proofs. We discuss in Remark \ref{remark:particular-cases} how the general assumptions simplify for the specific $H_f$ mentioned above.

Throughout this work, $u\de\in \X$ denotes a solution of problem \eqref{intitial-pb-exact}, for which both $R$ and $G$ are finite.
Due to  \eqref{h_f-formula}, one has the following inclusion for any $v\in \Y,$ 
\begin{gather}\label{incl-init}
    v \in\partial H_f(\op u\de).
\end{gather}

When deriving convergence rates for the considered algorithms, $u^\dagger$ is assumed to satisfy the source condition
 \begin{enumerate}[start=6,label={(A\arabic*)}]
    \item  $-\op ^*v\de\in\partial(R+G)(u\de), \text{ for some } v\de\in \Y.$ \label{source-condition}
\end{enumerate}
One can make the choice $v=v\de$ in \eqref{incl-init} to get 
\begin{gather}\label{opt-con}
    v\de \in\partial H_f(\op u\de),
\end{gather}
which is equivalent to 
\begin{gather}\label{OC1-gen-n}
    \op u\de\in\partial H_f^*(v\de).
\end{gather}

The Fenchel dual problem associated with \eqref{optim} has the formulation: 
    \begin{equation}\label{dualpb}
        \sup\limits_{v\in \Y}-\left\{ H^*_f(v)+(R+G)^* (-\op ^*v)\right\}.
    \end{equation}

Let $\mathcal{S}:\Y\times \Y \rightarrow[0,+\infty]$ be an appropriate discrepancy function in the data space with the property that $\mathcal{S}(y,z)=0$ if and only if $y=z.$ In practice, due to measurement errors, one does not record the exact data $f$, but only an approximation $f^{\delta}$. Therefore, we consider the following formulation for problem \eqref{optim} with noisy data,
\begin{equation}\label{noisy-data-optim-pb}
    \inf\limits_{u\in \X}\{ H_{f^{\delta}}(\op u)+R(u)+G(u)\},
\end{equation}
where $H_{f^{\delta}} = i_{C^{\delta}},$ with $C^{\delta} = \{ z : \mathcal{S}(f^{\delta},z)\leq \delta\}$.
 Although this setting includes the equality constrained problem for $\mathcal{S}(f^{\delta},\cdot)=i_{\{f^\delta\}}$, it is mainly known as the residual method, or Morozov regularization, studied in \cite{grasmair2011residual},
     \begin{equation}
    \inf\limits_{u\in \X}\{ R(u)+G(u)\} \text{ such that } \mathcal{S}(f^{\delta},\op u)\leq \delta.
\end{equation}

Of particular interest in this work is the instance $\mathcal{S}(f^{\delta},z) = \| f^{\delta} - z \|,$ which corresponds to the inequality constrained problem
\begin{equation}\label{morozov-reg}
    \inf\limits_{u\in \X}\{ R(u)+G(u)\} \text{ such that } \|\op u-f^{\delta}\|\leq\delta.
\end{equation}
To maintain generality, we first consider the framework employing a general data fidelity $H_{f^{\delta}}$, and subsequently verify that our assumptions hold for the choice $H_{f^{\delta}} = i_{C^{\delta}}$ discussed in Section \ref{sect:concrete-frameworks}. 
For technical reasons, throughout this work we quantify the noise level in terms of the norm of the data space. More precisely, we assume 
\begin{equation}\label{eq:noise_level}
     \| f - f^\delta \| \le \delta.
\end{equation}

The primal-dual formulation associated with the exact data problem reads as
\begin{align}\label{saddle-point-problem}
    \inf_{u\in \X} \sup_{v\in \Y} R(u) + G(u) + \langle v, \op u \rangle - H_f^*(v),
\end{align}
where the corresponding objective function 
$$\mathcal{L}:X\times Y\rightarrow\mathbb{R}\cup\{+\infty\},\qquad \mathcal{L}(u,v) := R(u) + G(u) + \langle v, \op u \rangle - H_f^*(v)$$ is  called the Lagrangian.
Note that  source condition \ref{source-condition} is equivalent to $(u\de, v\de)$ being a saddle point for  $\mathcal{L}$, i.e., for any $(u,v)\in X\times Y$, one has $\mathcal{L}(u\de, v)-\mathcal{L}(u, v\de)\leq0.$
\color{black}

As mentioned at the beginning of this section, we  provide a boundedness result which is essential to proving  error estimates for the Condat-V\~u algoritm in the Banach space setting, that  extends in some sense a lemma from \cite{schmidt2011convergence}. Its   proof can be found in Appendix \ref{appendix:proof-aux-result}. While the original version was designed for quadratic inequalities ($q=2$), our generalization holds for any order $q>1$. 

\begin{theo}\label{lemma-Schmidt-gen}
   Fix $p,q>1$ with $\frac{1}{p}+\frac{1}{q}=1$. 
 Let $(a_n)$ and $(\lambda_n)$ be  non-negative sequences, and $(S_n)$ a non-decreasing sequence verifying:
$
    \forall n\geq 1, a_n^q \le S_n + \sum_{k=1}^n \lambda_k a_k.
$
Then we have the following:
$
    \forall n\geq 1,  a_n \le \left(\frac{q}{q-1}S_n +\left(\sum_{i=1}^n \lambda_i\right)^p\right)^{1/q}.
$
\end{theo}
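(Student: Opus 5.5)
The plan is to reduce the recursive inequality to a single scalar inequality for the running maximum, and then absorb the linear term with Young's inequality. Throughout, set $\Lambda_n := \sum_{i=1}^n \lambda_i$. Since $\frac1p+\frac1q=1$, we have $p=\frac{q}{q-1}$.

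\textbf{Step 1: pass to the running maximum.} Fix $n\geq 1$ and let $A_n := \max_{1\le k\le n} a_k$, attained at some index $m\le n$. Apply the hypothesis at index $m$. Since $(S_n)$ is non-decreasing, $S_m\le S_n$. Since $\lambda_k, a_k\ge 0$, we get $\sum_{k=1}^m \lambda_k a_k \le A_n \sum_{k=1}^m\lambda_k \le A_n\Lambda_n$. Hence
\begin{equation*}
A_n^q = a_m^q \le S_n + \Lambda_n A_n .
\end{equation*}
This is the only step where the structure of the hypothesis matters. Using the maximum index $m$, rather than $n$ itself, is what lets us bound every $a_k$ in the sum by the same quantity $A_n$ that appears on the left. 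The monotonicity of $(S_n)$ is used exactly here.

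\textbf{Step 2: Young's inequality.} Apply Young's inequality with conjugate exponents $p,q$ to obtain $\Lambda_n A_n \le \frac1q A_n^q + \frac1p \Lambda_n^p$. Substituting gives
\begin{equation*}
\Bigl(1-\tfrac1q\Bigr) A_n^q \le S_n + \tfrac1p \Lambda_n^p .
\end{equation*}
Multiplying by $\frac{q}{q-1}$ and using $\frac{q}{(q-1)p}=1$ yields
\begin{equation*}
A_n^q \le \tfrac{q}{q-1} S_n + \Lambda_n^p .
\end{equation*}

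\textbf{Step 3: conclude.} Since $a_n\le A_n$, taking $q$-th roots gives the claimed bound. The right-hand side is nonnegative because the hypothesis at any index forces $S_n + \Lambda_n A_n \ge 0$, and in any case $A_n^q\ge 0$ bounds it from below.

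The only delicate point is Step 1, namely choosing the maximizing index so that the sum can be controlled uniformly. For $q=2$ this recovers, up to the constant, the quadratic lemma of \cite{schmidt2011convergence}. There, solving the quadratic inequality exactly gives a slightly sharper constant, while Young's inequality is what allows a general $q>1$.
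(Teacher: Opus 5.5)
Your proof is correct and follows essentially the same route as the paper's. Both arguments pass to the running maximum $\max_{k\le n}a_k$ using the monotonicity of $(S_n)$ and non-negativity, absorb the term $\max_{k\le n}a_k\cdot\sum_{i\le n}\lambda_i$ via Young's inequality, and conclude with $\frac{q}{(q-1)p}=1$; the only cosmetic difference is that you apply the hypothesis at the maximizing index, whereas the paper bounds every $a_k^q$, $k\le n$, uniformly and then takes the maximum.
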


For the sake of completeness, we recall the  result from \cite{schmidt2011convergence} which shows a slightly different upper bound.

\begin{prop}[{\cite[12]{schmidt2011convergence}}, Lemma 1]\label{lemma-Schmidt}
    Let $(a_n)$ and $(\lambda_n)$ be two non-negative sequences and $(S_n)$ a non-decreasing sequence with $a_1\leq\frac{1}{2}\lambda_1 + \sqrt{S_1 +\frac{1}{4}\lambda_1^2} $, such that 
    \begin{equation}\label{lemma-hyp}
        \forall n\geq1,\quad a_n^2\leq S_n+ \sum\limits_{k=1}^n \lambda_k a_k.
    \end{equation}
    Then the following inequality holds:
    $
        \forall n\geq1, \quad a_n\leq\frac{1}{2}\sum\limits_{k=1}^n \lambda_k + \sqrt{S_n+\left( \frac{1}{2}\sum\limits_{k=1}^n \lambda_k \right)^2}.
    $
\end{prop}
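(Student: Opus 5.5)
The plan is to bypass a delicate step-by-step induction and work with the running maximum $b_n := \max_{1\le k\le n} a_k$. Write $\Lambda_n := \sum_{k=1}^n \lambda_k$. Since the $\lambda_k$ are non-negative, $(\Lambda_n)$ is non-decreasing, and by hypothesis so is $(S_n)$. The target bound $\tfrac12\Lambda_n + \sqrt{S_n + \tfrac14\Lambda_n^2}$ is exactly the largest root of $t^2 - \Lambda_n t - S_n = 0$. So it suffices to show that $b_n$ satisfies the quadratic inequality $b_n^2 \le S_n + \Lambda_n b_n$, and then use $a_n \le b_n$.

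\textbf{Step 1 (uniform bound for earlier indices).} Fix $n$ and let $k \le n$. Applying \eqref{lemma-hyp} at index $k$, and using that $a_i \le b_n$ for $i\le k\le n$, that $\lambda_i \ge 0$, and that $S_k\le S_n$, $\Lambda_k \le \Lambda_n$, I get
\begin{equation}
 a_k^2 \le S_k + \sum_{i=1}^k \lambda_i a_i \le S_n + \Lambda_k b_n \le S_n + \Lambda_n b_n .
\end{equation}
Here the non-negativity of $b_n$ is needed to replace $\Lambda_k b_n$ by $\Lambda_n b_n$. Taking the maximum over $k\le n$ gives $b_n^2 \le S_n + \Lambda_n b_n$.

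\textbf{Step 2 (solve the quadratic).} The map $t\mapsto t^2 - \Lambda_n t - S_n$ is a convex parabola. By Step 1 it is non-positive at $t=b_n\ge 0$, so $b_n$ lies between the two roots. If $S_n\ge 0$ the roots are real. If $S_n<0$, the inequality $b_n^2-\Lambda_n b_n\le S_n$ still forces the discriminant $\Lambda_n^2+4S_n$ to be non-negative, so the square root in the bound is well defined. In either case $b_n \le \tfrac12\Lambda_n + \sqrt{S_n+\tfrac14\Lambda_n^2}$, and hence $a_n\le b_n$ gives the claim.

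The only point needing care is the sign of $S_n$, which is not assumed non-negative. This is handled by noting that Step 1 itself guarantees the discriminant is non-negative, so no separate case analysis is needed. The extra assumption on $a_1$ in the statement is the $n=1$ case of the conclusion. It follows automatically from \eqref{lemma-hyp} at $n=1$, i.e.\ $a_1^2\le S_1+\lambda_1 a_1$, so the argument does not need it.
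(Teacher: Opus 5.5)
Your proof is correct. Step 1 legitimately yields $b_n^2\le S_n+\Lambda_n b_n$, with $\Lambda_n=\sum_{k=1}^n\lambda_k$; it uses $a_i\le b_n$, $\lambda_i\ge 0$, the monotonicity of $(S_n)$ and $(\Lambda_n)$, and $b_n\ge 0$. Step 2 then correctly extracts the larger root, and you rightly note that this same inequality forces $S_n+\tfrac14\Lambda_n^2\ge 0$.

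The paper does not prove this proposition; it quotes it from \cite{schmidt2011convergence}. Your reduction to the running maximum is, however, exactly the device used in the appendix proof of the paper's generalization, Theorem \ref{lemma-Schmidt-gen}. The two arguments differ only in the final step:
\begin{itemize}
\item The paper bounds $b_n\Lambda_n$ by Young's inequality, which for $q=2$ only gives $a_n\le\sqrt{2S_n+\Lambda_n^2}$.
\item You solve the quadratic exactly and so obtain the sharper bound $\tfrac12\Lambda_n+\sqrt{S_n+\tfrac14\Lambda_n^2}$ stated in the proposition. It is indeed no larger than $\sqrt{2S_n+\Lambda_n^2}$, by $(x+y)^2\le 2(x^2+y^2)$.
\end{itemize}
What Young's inequality buys the paper is the treatment of general exponents $q>1$, where the inequality $b_n^q\le S_n+\Lambda_n b_n$ has no closed-form root. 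What your exact-root step buys is the sharp constant for $q=2$.

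Your remark that the hypothesis on $a_1$ is redundant is also correct. It matches how the paper verifies that hypothesis when applying the proposition: in Step 3 of the proof of Theorem \ref{main-theo-part} and in the proof of \eqref{estim-seq-v}, the condition is obtained as the $n=1$ case of the recursive inequality.
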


\section{The Condat-V\~u algorithm with noisy data in the Hilbert space setting}\label{sect:particular-instances}

Error estimates for the Condat-Vũ algorithm in the exact data setting were established in \cite{chambolle2011first,chambolle2016ergodic}. To the best of our knowledge, corresponding estimates for noisy data have so far been obtained only in \cite{molinari2024iterative}, for the particular choice $H_{f^\delta}=i_{\{f^\delta\}}$ and for a slightly different variant of the non-accelerated algorithm investigated in \cite{chambolle2011first}. The minor distinction between the two non-accelerated schemes is discussed after presenting Algorithm \ref{Condat-Vu-al-4}.

This section shows ergodic convergence rates for the Condat-V\~u algorithm with general data fidelity functions in Hilbert spaces, in the event of noisy data.
The Hilbert space framework allows us to clearly present the main arguments before turning to the more technical proofs in Banach spaces - see  Section \ref{sect:Condat-Vu-general}. In this context, we assume that \ref{A4} holds for $p=2.$

To solve \eqref{optim}, one can derive the  Condat-V\~u algorithm from  conditions \ref{source-condition} and \eqref{OC1-gen-n} as a fixed-point procedure, namely

\begin{gather}\label{fpi-gen-n}
    \begin{pmatrix}
        0 \\ 0
    \end{pmatrix} \in
    \begin{pmatrix}
        \partial R\left(u^{k+1}\right)+ G'\left(u^{k+1}\right)+\op ^{*} v^{k+1} \\ \partial H_{f^{\delta}}^{*}\left(v^{k+1}\right)-\op  u^{k+1}
    \end{pmatrix}+\nabla B\left(u^{k+1}, v^{k+1}\right)-\nabla B\left(u^{k}, v^{k}\right),
\end{gather}
for some convex function $B$. Hereafter, we will use the shorthand $w = (u, v)$, $w^k = (u^k, v^k)$ for any $k\geq0$, and $w^{\dagger} = (u^{\dagger}, v^{\dagger})$. We will also write  $B(w)$ instead of $B(u, v)$, and denote its derivative by $\nabla B$, in order to distinguish functions of the joint variable $w$ from functions such as $G$, whose derivative is denoted by $G'$.

The function $B:\X\times \Y\rightarrow\mathbb{R}\cup\{+\infty\}$ is chosen as follows, for positive scalars $\tau, \sigma$,
\begin{equation}\label{B-fc-quadr}
    B(u, v)=\frac{1}{2 \tau}\Vert u\Vert^{2}+\frac{1}{2 \sigma}\Vert v\Vert^{2}-\langle v, \op u\rangle-G(u).
\end{equation}

The fixed-point iteration turns into
\begin{gather}\label{pre-alg}
        \begin{pmatrix}
        0 \\ 0
    \end{pmatrix} \in
    \begin{pmatrix}
        \partial R\left(u^{k+1}\right)+ G'\left(u^{k}\right)+\op ^{*} v^{k}+\frac{1}{\tau}u^{k+1}-\frac{1}{\tau}u^{k} \\ \partial H_{f^{\delta}}^{*}\left(v^{k+1}\right)-\op (2u^{k+1}-u^{k})+\frac{1}{\sigma}v^{k+1}-\frac{1}{\sigma}v^{k}
    \end{pmatrix},
\end{gather}
 yielding the Condat-V\~{u} described below.
\begin{algorithm}[H]
\caption{The Condat-V\~u algorithm for noisy data}
\label{alg:Condat-Vu-Hilbert}
\begin{algorithmic}[1]
\State \textbf{Input:} Initial values $u^0\in\operatorname{dom} G', v^0$, step sizes $\tau, \sigma > 0$, data $f^{\delta}$ 
        \For{$k = 0, 1, 2, \dots$}
        \State $u^{k+1} \gets \text{prox}_{\tau R} \left( u^{k} - \tau \left( \op^* v^{k} + G'(u^{k}) \right) \right)$
    
        \State $v^{k+1} \gets \text{prox}_{\sigma H_{f^{\delta}}^*} \left( v^{k} + \sigma \op  (2u^{k+1} - u^{k})) \right)$
        \EndFor
        \State \textbf{return} $u^{k+1}, v^{k+1}$
        \end{algorithmic}
\end{algorithm}
\vspace{-10pt}


To establish explicit error estimates in the presence of noisy data, stronger structural conditions are required.
To this end, we introduce two alternative assumptions, either of which is sufficient for the subsequent analysis. 
\begin{enumerate}[start=7,label={(A\arabic*)}]
    \item There exist a constant $C>0$ and subgradients $\zeta^{\delta} \in \partial H_{f^{\delta}}^{*}\left(v\de\right)$ and $\zeta \in \partial H_{f}^{*}\left(v\de\right)$ such that\label{A10}
    \begin{equation} \label{hypo-noisy}
        \forall k\in\mathbb{N}, \;\langle  \zeta^{\delta}-\zeta,v\de-v^k\rangle \leq C{\delta} \Vert v\de-v^k\Vert;
    \end{equation} 
    \item There exists a constant $C>0$ such that \label{redassump0}
    \begin{equation*} 
        \forall k\in\mathbb{N}, \;    H_{f^{\delta}}^*(v\de)-H_{f^{\delta}}^*(v^{k})+H_{f}^*(v^{k})-H_f^*(v\de) \leq C\delta\|v^{k}-v\de\|.
    \end{equation*} 
\end{enumerate}
\vspace{-5pt}
In Section \ref{sect:concrete-frameworks} we will discuss some cases when these inequalities are satisfied.

\begin{remark}\label{remark:particular-cases}
    If $H_f$ is given by \eqref{h_f-formula}, the inequalities from assumptions \ref{A10} and \ref{redassump0} become
    \begin{equation*}
        \forall k\in\mathbb{N}, \;\langle  \zeta^{\delta}-f,v\de-v^k\rangle \leq C{\delta} \Vert v\de-v^k\Vert,
    \end{equation*}
    respectively, 
    \begin{equation*}
        \forall k\in\mathbb{N}, \;    H_{f^{\delta}}^*(v\de)-H_{f^{\delta}}^*(v^{k})+\langle f, v^{k}-v\de\rangle \leq C\delta\|v^{k}-v\de\|.
    \end{equation*}
\end{remark}

We provide below error estimates for the ergodic averages
$
    \overline{u}^n := \frac{1}{n}\sum_{k = 1}^n u^k,  \overline{v}^n := \frac{1}{n}\sum_{k = 1}^n v^k.
$

\begin{theo}\label{main-theo-part}
    Let $\tau, \sigma>0$ be fixed. Suppose that assumptions \ref{A1} - \ref{A5} and \ref{A10} hold, and $\tau(\sigma\Vert \op \Vert^2+L)<1$.
    Then there exists $\nu > 0$ such that for any $n\geq1$,
\begin{gather}\label{estim:non-acc}
 D_{R+G}(\overline{u}^n, u^\dagger) + D_{H_{f^{\delta}}^\star}\left(\overline{v}^n, v^\dagger\right)  \leq \frac{1}{n} D_B(w\de,w^{0}) + \frac{C^2}{\nu}\delta^2 (n+1) + C\delta \sqrt{\frac{2D_B(w\de,w^0)}{\nu}}.
\end{gather}
\end{theo}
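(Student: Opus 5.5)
The plan is to use the standard Bregman-proximal (three-point) analysis of the fixed-point inclusion \eqref{fpi-gen-n} with $B$ from \eqref{B-fc-quadr}, comparing each iterate against the saddle point $w\de=(u\de,v\de)$ of the \emph{exact} Lagrangian. The noise then appears only as a perturbation term, which \ref{A10} controls. The steps are as follows.

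\textbf{Step 1: per-iteration inequality.} From \eqref{pre-alg}, take $p^{k+1}\in\partial R(u^{k+1})$ and $q^{k+1}\in\partial H_{f^\delta}^*(v^{k+1})$ satisfying the inclusion. Pair the system with $w^{k+1}-w\de$ and use the three-point identity
\[
\langle \nabla B(w^{k})-\nabla B(w^{k+1}), w^{k+1}-w\de\rangle = D_B(w\de,w^k)-D_B(w\de,w^{k+1})-D_B(w^{k+1},w^k).
\]
Together with convexity of $R$, $G$ and $H^*_{f^\delta}$, this should give
\[
D_{R+G}(u^{k+1},u\de)+D_{H_{f^\delta}^*}(v^{k+1},v\de)+\langle \zeta^\delta-\zeta, v^{k+1}-v\de\rangle \le D_B(w\de,w^k)-D_B(w\de,w^{k+1})-D_B(w^{k+1},w^k).
\]
Here the Bregman distances are taken with respect to the subgradients $-\op^*v\de$ from \ref{source-condition} and $\zeta^\delta\in\partial H^*_{f^\delta}(v\de)$. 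The cross term arises because $\op u\de=\zeta\in\partial H_f^*(v\de)$ by \eqref{OC1-gen-n} (one may take $\zeta=\op u\de$, or adjust accordingly), whereas the natural subgradient for the noisy dual Bregman distance is $\zeta^\delta$. The $G$-linearization error is absorbed using the $L$-smoothness in \ref{A4}.

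\textbf{Step 2: strong convexity of $B$.} The step-size condition $\tau(\sigma\|\op\|^2+L)<1$ makes $B$ strongly convex. Concretely, $D_B(w,w')\ge \frac{\nu}{2}\|w-w'\|^2$ for some $\nu>0$, after subtracting the $G$ part, which only costs $\frac{L}{2}\|u-u'\|^2$. This yields $D_B(w\de,w^k)\ge \frac{\nu}{2}\|v\de-v^k\|^2$. By \ref{A10}, the perturbation term is bounded by $C\delta\|v^{k+1}-v\de\|\le C\delta\sqrt{2D_B(w\de,w^{k+1})/\nu}$. This is where $\nu$ in the statement comes from.

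\textbf{Step 3: summation and the bound on $D_B$.} Drop $D_B(w^{k+1},w^k)\ge0$ and sum from $k=0$ to $n-1$. With $a_k:=\sqrt{D_B(w\de,w^k)}$ this gives
\[
\sum_{k=1}^n\big(D_{R+G}(u^k,u\de)+D_{H^*_{f^\delta}}(v^k,v\de)\big)+a_n^2\le a_0^2+C\delta\sqrt{2/\nu}\sum_{k=1}^n a_k.
\]
Since the Bregman terms are nonnegative, $a_n^2\le a_0^2+\lambda\sum_{k\le n}a_k$ with $\lambda=C\delta\sqrt{2/\nu}$. Proposition \ref{lemma-Schmidt}, or Theorem \ref{lemma-Schmidt-gen} with $q=2$, then gives $a_k\le a_0+\lambda k$. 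I expect this self-referential bound to be the main obstacle. The initial condition of Proposition \ref{lemma-Schmidt} holds trivially at $n=1$, and a crude use of $\sqrt{x+y}\le\sqrt x+\sqrt y$ suffices.

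\textbf{Step 4: assembling the estimate.} Substitute back to get
\[
\sum_{k=1}^n\lambda a_k\le n\lambda a_0+\lambda^2\frac{n(n+1)}{2}.
\]
Apply Jensen's inequality: both Bregman distances are convex in their first argument, so their values at the ergodic averages are bounded by the averages of the values. Dividing by $n$ gives
\[
\frac{D_B(w\de,w^0)}{n}+C\delta\sqrt{\frac{2D_B(w\de,w^0)}{\nu}}+\frac{C^2\delta^2}{\nu}(n+1),
\]
which is \eqref{estim:non-acc}.
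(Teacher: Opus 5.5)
Your proof is correct and has the same skeleton as the paper's: pairing the inclusion with $w^{k+1}-w^\dagger$ against the exact saddle point, the three-point identity for $B$, the bound $D_B(w^\dagger,w)\ge\frac{\nu}{2}\|v^\dagger-v\|^2$, Schmidt's lemma, and summation plus Jensen. The one real difference is the sequence to which you apply Schmidt's lemma.

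The paper first derives a recursion for the dual error alone,
$\|v^\dagger-v^n\|^2\le\frac{2C\delta}{\nu}\sum_{k\le n}\|v^\dagger-v^k\|+\frac{2}{\nu}D_B(w^\dagger,w^0)$.
It applies Proposition~\ref{lemma-Schmidt} to $a_n=\|v^\dagger-v^n\|$ and then goes back to the per-iteration identity to substitute the resulting bound. You instead bound the perturbation by $C\delta\sqrt{2/\nu}\,\sqrt{D_B(w^\dagger,w^{k+1})}$ immediately and run the lemma on $a_k=\sqrt{D_B(w^\dagger,w^k)}$. A single summed inequality then serves both as the recursion and as the final estimate, so the second pass is unnecessary.

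The constants come out identical. Your $n\lambda a_0+\lambda^2 n(n+1)/2$ equals the paper's $nC\delta\sqrt{2D_B(w^\dagger,w^0)/\nu}+C^2\delta^2 n(n+1)/\nu$. Your bound $a_n\le a_0+\lambda n$ reproduces exactly the paper's dual estimate \eqref{estim-dualvar}, and it also controls $\|u^\dagger-u^n\|$. The paper's choice carries over directly to the Banach case via Theorem~\ref{lemma-Schmidt-gen}. Yours does as well, taking $a_k=D_B(w^\dagger,w^k)^{1/q}$.

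Two small corrections. First, Theorem~\ref{lemma-Schmidt-gen} with $q=2$ only yields $a_n\le\sqrt{2a_0^2+\lambda^2n^2}\le\sqrt{2}\,a_0+\lambda n$. That would inflate the last term of \eqref{estim:non-acc} by a factor $\sqrt{2}$, so only Proposition~\ref{lemma-Schmidt} gives the stated constant. Second, in Step 1 there is nothing to ``adjust''. Assumption \ref{A10} must be read with $\zeta=\op u^\dagger$, the element supplied by \eqref{OC1-gen-n}; the paper assumes this implicitly, and it is automatic for $H_f=i_{\{f\}}$. If $\partial H_f^*(v^\dagger)$ contained some other $\zeta$, you would have no control of $\langle \zeta-\op u^\dagger, v^{k+1}-v^\dagger\rangle$.
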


\begin{proof}

Fix $n\geq1$ and  $0\leq k\leq n-1$. We structure the proof in several steps. \\
\textbf{Step 1: } We aim to rewrite the fixed-point iteration to obtain an equality in terms of the Bregman distances $D_R, D_G, D_{H_{f^{\delta}}^*}$ evaluated at iteration ${k+1}.$

By employing \ref{source-condition} and \eqref{OC1-gen-n}, iteration \eqref{fpi-gen-n} has the following form,

\begin{gather}\label{fixed-point-iteration-part}
    \begin{pmatrix}
        0 \\ 0
    \end{pmatrix} \in
    \begin{pmatrix}
        \partial R\left(u^{k+1}\right)-\partial R\left(u\de\right)+G'\left(u^{k+1}\right)-G'\left(u\de\right)+\op^{*} (v^{k+1}-v\de) \\ \partial H_{f^{\delta}}^{*}\left(v^{k+1}\right)-\partial H_f^{*}\left(v\de\right)-\op (u^{k+1}-u\de)
    \end{pmatrix}+\nabla B\left(w^{k+1}\right)-\nabla B\left(w^{k}\right).
\end{gather}

Taking the dual product with $w^{k+1} - w^\dagger$, we obtain
\begin{gather*}
    0 = D_R^{s}(u^{k+1}, u^\dagger)+D_G^{s}(u^{k+1}, u^\dagger) + \left\langle  \partial H_{f^{\delta}}^{*}\left(v^{k+1}\right)-\partial H_f^{*}\left(v\de\right),v^{k+1}-v\de\right\rangle +  \\ \langle \nabla B(w^{k+1}) - \nabla B(w^{k}), w^{k+1} - w^\dagger \rangle. 
\end{gather*}
One can prove that $B$ is jointly (strictly) convex by showing that $\nabla B$ is a (strictly) monotone operator. This requires assumption \ref{A4} and inequality $\tau(\sigma\|K\|^2+L)<1$ - for details, see Section \ref{sect:Condat-Vu-general} which covers the reflexive Banach space setting.
Further, the three-point identity for $B$ implies
\begin{gather}
D_R^{s}(u^{k+1}, u^\dagger)+D_G^{s}(u^{k+1}, u^\dagger) + D_{H_{f^{\delta}}^\star}^{s}(v^{k+1}, v^\dagger) =\nonumber \\ 
    \left\langle  \partial H_{f^{\delta}}^{*}\left(v\de\right)-\partial H_f^{*}\left(v\de\right),v\de-v^{k+1}\right\rangle  - D_B(w\de,w^{k+1})-D_B(w^{k+1},w^{k})+D_B(w\de,w^{k}).\label{main-eq-n}
\end{gather} 

\textbf{Step 2: }We need an upper bound for $\|v\de-v^n\|$. Since this cannot be derived directly, we obtain in this step just a recursive inequality for $\|v\de-v^n\|^2$. 
Note that the left-hand side of \eqref{main-eq-n} is non-negative, and thus,
\begin{equation*}
    \left\langle  \partial H_{f^{\delta}}^{*}\left(v\de\right)-\partial H_f^{*}\left(v\de\right),v\de-v^{k+1}\right\rangle - D_B(w\de,w^{k+1})-D_B(w^{k+1},w^{k})+D_B(w\de,w^{k})\geq0. 
\end{equation*}
By summing up for $k$ from $0$ to $n-1$, one gets
\begin{equation*}
   \sum\limits_{k=1}^n{\left\langle  \partial H_{f^{\delta}}^{*}\left(v\de\right)-\partial H_f^{*}\left(v\de\right),v\de-v^k\right\rangle} -D_B(w\de,w^n)+D_B(w\de,w^0) -\sum\limits_{k=1}^n{D_B(w^k,w^{k-1})}\geq 0,
\end{equation*}
which, together with 
    $
        \sum\limits_{k=1}^n{D_B(w^k,w^{k-1})}\geq 0
    $ 
yields
\begin{equation} \label{ineq-Bregman-n}
    \sum\limits_{k=1}^n{\left\langle  \partial H_{f^{\delta}}^{*}\left(v\de\right)-\partial H_f^{*}\left(v\de\right),v\de-v^k\right\rangle} +D_B(w\de,w^0)\geq D_B(w\de,w^n).
\end{equation}
The right-hand side can be rewritten as follows,
\begin{gather*}
    D_B(w\de,w^n)=\frac{1}{2\tau}\Vert u\de-u^n\Vert^2 + \frac{1}{2\sigma}\Vert v\de-v^n\Vert^2 - D_G(u\de,u^n) + \langle v\de-v^n, \op (u^n-u\de)\rangle.
\end{gather*}
    Since $G$ is $L$-smooth, one has 
     $
        D_G(u\de,u^n)\leq\frac{L}{2}\Vert u\de-u^n\Vert^2.
     $
    One assumption  of the theorem implies $\sigma < \frac{1-L\tau}{\tau\|\op \|^2}$. Hence, applying Young's inequality with $\bar{\epsilon} \in \left( \sigma, \frac{1-L\tau}{\tau\|\op \|^2} \right)$ leads to
    \begin{gather*}
        \langle v\de-v^n, \op (u^n-u\de)\rangle \geq -\Vert v\de-v^n\Vert\cdot\Vert \op \Vert\cdot\Vert u\de-u^n\Vert \geq 
        -\frac{\Vert v\de-v^n\Vert^2}{2\bar{\epsilon}}-\frac{\bar{\epsilon}\Vert \op \Vert^2\cdot\Vert u\de-u^n\Vert^2}{2},
    \end{gather*}
    which further ensures
    \begin{gather}\label{inq-appendix}
        D_B(w\de,w^n)\geq \left(\frac{1}{\tau}-L - \bar{\epsilon}\Vert \op \Vert^2\right)\frac{\Vert u\de-u^n\Vert^2}{2} + \left( \frac{1}{\sigma}-\frac{1}{\bar{\epsilon}}\right)\frac{\Vert v\de-v^n\Vert^2}{2}.
    \end{gather}
     Denote $\nu = \frac{1}{\sigma} - \frac{1}{\bar{\epsilon}}$.
     With this choice, both coefficients of the right-hand side in \eqref{inq-appendix} are positive.
    This implies
    $
            D_B(w\de,w^n)\geq\nu\frac{\Vert v\de-v^n\Vert^2}{2},
    $
    which together with \eqref{ineq-Bregman-n} yields
    \begin{equation*}
        \Vert v\de-v^n\Vert^2 \leq \frac{2}{\nu}\sum\limits_{k=1}^n{\left\langle  \partial H_{f^{\delta}}^{*}\left(v\de\right)-\partial H_f^{*}\left(v\de\right),v\de-v^k\right\rangle} +\frac{2}{\nu} D_B(w\de,w^0).
    \end{equation*}
    By employing \eqref{hypo-noisy}, one gets
    \begin{equation}\label{estim-dual-CV}
        \Vert v\de-v^n\Vert^2 \leq \frac{2C\delta}{\nu}\sum\limits_{k=1}^n{\Vert v\de-v^k\Vert} +\frac{2}{\nu} D_B(w\de,w^0).
    \end{equation}
\textbf{Step 3:} We apply Proposition \ref{lemma-Schmidt}
 to get an estimate for $\|v\de-v^n\|$.
 
We make the following choices: $a_n := \Vert v\de- v^n\Vert\geq0$, $S_n := \frac{2}{\nu}D_B(w\de,w^0) \text{ non-decreasing}$, $\lambda_n := \frac{2C{\delta}}{\nu} \geq 0$
for any $n\geq 1$. One needs to show that $a_1\leq \frac{1}{2}\lambda_1+\sqrt{S_1+\frac{1}{4}\lambda_1^2}$, which is equivalent to 
\begin{equation*}
    \|v\de-v^1\|\leq \frac{C\delta}{\nu} + \sqrt{\frac{2}{\nu}D_B(w\de,w^0) + \frac{C^2\delta^2}{\nu^2}}.
\end{equation*}
It suffices to justify that 
\begin{equation*}
     \|v\de-v^1\|^2 - \frac{2C\delta\|v\de-v^1\|}{\nu} + \frac{C^2\delta^2}{\nu^2} \leq \frac{2}{\nu}D_B(w\de,w^0) + \frac{C^2\delta^2}{\nu^2}, 
\end{equation*}
which is exactly \eqref{estim-dual-CV} for $n=1$.
Then, for every $k$, 
\begin{gather} 
    \Vert v\de- v^{k+1}\Vert\leq \frac{\delta (k+1) C}{\nu}  + \sqrt{\frac{2D_B(w\de,w^0)}{\nu} + \left( \frac{\delta (k+1) C}{\nu}  \right)^2} \leq 
     \frac{2\delta (k
    +1)C}{\nu}  + \sqrt{\frac{2D_B(w\de,w^0)}{\nu}},\label{estim-dualvar}
\end{gather}
where  the last inequality is a consequence of $\sqrt{a+b}\leq\sqrt{a}+\sqrt{b},$ for $a, b$ non-negative.\\
\textbf{Step 4: }We plug the previous estimate in \eqref{main-eq-n} to get the conclusion.

 More precisely, by employing inequalities \eqref{hypo-noisy} and \eqref{estim-dualvar} in \eqref{main-eq-n}, one finds
\begin{gather}
    D_R^{s}(u^{k+1}, u^\dagger)+D_G^{s}(u^{k+1}, u^\dagger) + D_{H_{f^{\delta}}^*}^{s}(v^{k+1}, v^\dagger) \leq \nonumber \\ C \delta \left(\frac{2\delta (k+1) C}{\nu} + \sqrt{\frac{2D_B(w\de,w^0)}{\nu}}\right) - D_B(w\de,w^{k+1})-D_B(w^{k+1},w^{k})+D_B(w\de,w^{k}).
\end{gather}
Summing up from $0$ to $n-1$ and using the sum rule for the subdifferential imply
\begin{gather*}
    \sum\limits_{k=1}^{n}{\left(D_{R+G}^{s}(u^k, u^\dagger) + D_{H_{f^{\delta}}^*}^{s}(v^k, v^\dagger)\right)} \leq
    D_B(w\de,w^{0}) +  \frac{C^2\delta^2 n(n+1)}{\nu} + C\delta n \sqrt{\frac{2D_B(w\de,w^0)}{\nu}} .
\end{gather*}
Since  convexity  holds for the one-sided Bregman distances 
$D_{R+G}(u^k, u^\dagger)$ but not necessarily for their symmetric counterparts $D_{R+G}^s(u^k, u^\dagger)$,   we can  deduce the announced  estimates for the ergodic averages $\overline{u}^n = (\sum_{k = 1}^n u^k)/n$ and $\overline{v}^n = (\sum_{k = 1}^n v^k)/n$,
\begin{gather*}
 D_{R+G}(\overline{u}^n, u^\dagger) + D_{H_{f^{\delta}}^*}\left(\overline{v}^n, v^\dagger\right)  \leq \frac{1}{n} D_B(w\de,w^{0}) + \frac{C^2}{\nu}\delta^2 (n+1) + C\delta \sqrt{\frac{2D_B(w\de,w^0)}{\nu}}.
\end{gather*} 
\end{proof}

\begin{remark}
    If $\delta = 0$ in \eqref{estim:non-acc}, we recover the estimate from \cite{chambolle2016ergodic}:
    $
         D_{R+G}(\overline{u}^n, u^\dagger) + D_{H_{f^{\delta}}^*}\left(\overline{v}^n, v^\dagger\right)  \leq \frac{1}{n} D_B(w\de,w^{0}). 
    $
\end{remark}

\begin{remark}
 If we choose the stopping index $n_{\delta}\sim\frac{1}{\delta}$, then  
    \begin{equation}\label{concl-estim-noisy-basic}
    D_{R+G}(\overline{u}^n, u^\dagger) + D_{H_{f^{\delta}}^*}\left(\overline{v}^n, v^\dagger\right)  = O(\delta).    
    \end{equation}
    This guarantees  convergence of the ergodic sequence $(\bar{u}^n)$ to the optimal solution $u\de$ in the sense of the Bregman distance, when $\delta\rightarrow0$.
    
\end{remark}

\begin{remark}
 By employing condition \ref{redassump0} instead of \ref{A10}, we can analogously derive upper bounds similar to those of \cite{molinari2024iterative} for $\mathcal{L}(\overline{u}^n, v\de) - \mathcal{L}(u\de, \overline{v}^n) = D_{R+G}(\overline{u}^n, u^\dagger) + D_{H_{f}^*}\left(\overline{v}^n, v^\dagger\right).$
 While the primal variable error estimate is the same whether Assumption \ref{A10} or Assumption \ref{redassump0} is imposed, the corresponding dual variable estimate depends on which of the two alternative assumptions is considered. In particular, under Assumption \ref{redassump0}, we obtain error estimates for $D_{H_{f^{\delta}}^*}$, that is more natural in the context of ill-posed problems.

\end{remark}

\begin{remark}
    If $H_{f^{\delta}} = i_{C^{\delta}},$ with $C^{\delta} = \{ z : \|f^{\delta}-z\|\leq \delta\}$, then $D_{H^*_{f^{\delta}}}(\bar{v}^n, v\de) = \delta D_{\|\cdot\|}(\bar{v}^n, v\de)$, which together with \eqref{concl-estim-noisy-basic} gives the boundedness of $D_{\|\cdot\|}(\bar{v}^n, v\de)$, for any $n$.
\end{remark}

\section{The accelerated PDHG algorithm with noisy data in the Hilbert space setting}\label{Acc-PDHG-section}
In this section, we establish convergence rates for the accelerated PDHG algorithm within Hilbert spaces in the case of noisy data and general data fidelity terms. Our analysis is inspired by  \cite{chambolle2011first}, which treats the case of general fidelities with exact data.

Let   $G=0$ in  \eqref{optim} and suppose that conditions \ref{A1} - \ref{A3}, \ref{A5} and \ref{redassump0}  hold.
We will work under the following standard assumption, which is instrumental in deriving the error estimates:
\begin{enumerate}[start=9,label={(A\arabic*)}]
    \item There exists $\gamma>0$ such that the function $R$ is $\gamma$-strongly convex, i.e., 
   \begin{equation*}
       \forall u_1 \in \X,\, \forall u_2\in\operatorname{dom } R, \, \forall \xi\in\partial R(u_2), \quad R(u_1) \geq R(u_2) + \langle\xi, u_1-u_2\rangle + \frac{\gamma}{2} \|u_1 - u_2\|^2.
       \end{equation*}\label{A7}
\end{enumerate}
Consider the following accelerated version investigated in \cite{chambolle2011first}, but  adapted here  for noisy data:

\begin{algorithm}[H]
\caption{Accelerated PDHG algorithm}
\label{acc-alg}
\begin{algorithmic}[1]
\State \textbf{Input:} $\tau_0, \sigma_0 > 0$ such that $\tau_0 \sigma_0 \|\op \|^2 < 1$, $\tau_{-1}\neq0$, $(u^0, v^0) \in \X \times \Y$, $\gamma > 0$, data $f^{\delta}$
\State \textbf{Initialization:} $\tilde{u}^0 \gets u^0$, $u^{-1} \gets u^0$
\For{$k = 0, 1, 2, \dots$}
    \State $v^{k+1} \gets \operatorname{prox}_{\sigma_k  H_{f^{\delta}}^*}(v^k + \sigma_k \op  \tilde{u}^k)$ 
    
    \State $u^{k+1} \gets \operatorname{prox}_{\tau_k R}(u^k - \tau_k \op ^* v^{k+1})$ 
    
    \State $\theta_k \gets 1 / \sqrt{1 + 2 \gamma \tau_k}$ 
    
    \State $\tau_{k+1} \gets \theta_k \tau_k$
    
    \State $\sigma_{k+1} \gets \sigma_k / \theta_k$
    
    \State $\tilde{u}^{k+1} \gets u^{k+1} + \theta_k (u^{k+1} - u^k)$ 
\EndFor
\State \textbf{Return:} $u^{k+1}, v^{k+1}$
\end{algorithmic}
\end{algorithm}

\begin{remark}
    Algorithm \ref{acc-alg} is closely related to the method studied in \cite{chambolle2016ergodic}, in which the dual variable is evaluated first at each iteration, followed by the update of the primal variable. Note, however, that the algorithm from \cite{chambolle2016ergodic} can also be reformulated so that the primal variable is computed first. This alternative formulation is discussed in Remark 5.3 of \cite{chambolle2016introduction} for exact data.

\end{remark}

We first prove several auxiliary lemmas, which will be used to establish the main theorem of this section.
We start by studying the standard, non-accelerated version, namely Algorithm \ref{alg:Condat-Vu-Hilbert} from Section \ref{sect:particular-instances}, following the ideas from \cite{chambolle2011first},
\begin{gather}\label{non-acc-alg}
    \begin{cases}
        v^{k+1}=\operatorname{prox}_{\sigma H_{f^{\delta}}^*}(v^k+\sigma \op \tilde{u}) \\
        u^{k+1}=\operatorname{prox}_{\tau R}(u^k-\tau \op^* \tilde{v}),
    \end{cases}
\end{gather}
for some $\tilde{u}\in \X, \tilde{v}\in \Y$ depending on the previous iterates.

\begin{lemma}\label{lemma-non-acc}
    Consider the non-accelerated algorithm \eqref{non-acc-alg}. Then the following inequality holds for any $k\geq0$ and  $\tilde{u}, \tilde{v}$ as above,
    \begin{align}
    0 \geq & H_{f^{\delta}}^*(v^{k+1}) - H_{f^{\delta}}^*(v\de) -\langle \op u\de, v^{k+1}-v\de \rangle + R(u^{k+1}) - R(u\de)  + \langle \op^*v\de, u^{k+1}-u\de \rangle + \nonumber\\
    &  \langle \op (u^{k+1}-\tilde{u}) , v^{k+1} - v\de\rangle - \langle \op (u^{k+1} - u\de), v^{k+1}-\tilde{v}\rangle + \frac{1}{\sigma}d_v +  \frac{1}{\tau} d_u  + \frac{\gamma}{2}\Vert u\de-u^{k+1}\Vert^2, \label{ineq-non-acc}
\end{align}
where $d_v = \frac{\Vert v^k-v^{k+1}\Vert^2}{2} + \frac{\Vert v\de-v^{k+1}\Vert^2}{2} - \frac{\Vert v\de-v^k\Vert^2}{2}$ and $d_u = \frac{\Vert u^k-u^{k+1}\Vert^2}{2} + \frac{\Vert u\de-u^{k+1}\Vert^2}{2} - \frac{\Vert u\de-u^k\Vert^2}{2}$.
\end{lemma}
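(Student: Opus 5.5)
The plan is to write down the optimality conditions of the two proximal steps in \eqref{non-acc-alg}, turn each into a subgradient inequality tested at $v\de$ and $u\de$, add them, and then reorganize the mixed terms. The only place strong convexity enters is the primal step, through \ref{A7}.

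For the dual step, the definition of the proximal map gives
\[
\frac{v^k-v^{k+1}}{\sigma}+\op\tilde{u}\in\partial H_{f^{\delta}}^*(v^{k+1}).
\]
Since $H_{f^{\delta}}^*$ is convex, evaluating the subgradient inequality at $v\de$ yields
\[
H_{f^{\delta}}^*(v\de)\geq H_{f^{\delta}}^*(v^{k+1})+\Big\langle \tfrac{v^k-v^{k+1}}{\sigma}+\op\tilde u,\, v\de-v^{k+1}\Big\rangle .
\]
For the primal step, similarly,
\[
\frac{u^k-u^{k+1}}{\tau}-\op^*\tilde v\in\partial R(u^{k+1}).
\]
Applying \ref{A7} with $u_1=u\de$ and $u_2=u^{k+1}$ gives
\[
R(u\de)\geq R(u^{k+1})+\Big\langle \tfrac{u^k-u^{k+1}}{\tau}-\op^*\tilde v,\, u\de-u^{k+1}\Big\rangle+\tfrac{\gamma}{2}\|u\de-u^{k+1}\|^2 .
\]

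Next, rewrite the proximal residual terms with the Hilbert-space three-point identity
\[
\langle a-b,\,c-b\rangle=\tfrac12\|a-b\|^2+\tfrac12\|c-b\|^2-\tfrac12\|a-c\|^2 .
\]
Taking $a=v^k$, $b=v^{k+1}$, $c=v\de$ gives $\langle v^k-v^{k+1},v\de-v^{k+1}\rangle=d_v$. The analogous choice in $u$ gives $d_u$. So after moving everything to one side, the two inequalities read
\[
0\geq H_{f^{\delta}}^*(v^{k+1})-H_{f^{\delta}}^*(v\de)-\langle\op\tilde u,v^{k+1}-v\de\rangle+\tfrac1\sigma d_v
\]
and
\[
0\geq R(u^{k+1})-R(u\de)+\langle \op^*\tilde v,u^{k+1}-u\de\rangle+\tfrac1\tau d_u+\tfrac\gamma2\|u\de-u^{k+1}\|^2 .
\]

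Adding the two inequalities, the remaining task is the bookkeeping of the bilinear terms, which is the only step needing care. The target \eqref{ineq-non-acc} contains $-\langle\op u\de,v^{k+1}-v\de\rangle+\langle \op^*v\de,u^{k+1}-u\de\rangle$ together with the two correction terms. It suffices to check that these four terms sum to $-\langle \op\tilde u,v^{k+1}-v\de\rangle+\langle\op^*\tilde v,u^{k+1}-u\de\rangle$.

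To check this, expand the four terms:
\[
-\langle \op u\de,v^{k+1}-v\de\rangle+\langle \op u^{k+1}-\op u\de,v\de\rangle+\langle \op u^{k+1}-\op\tilde u,v^{k+1}-v\de\rangle-\langle \op u^{k+1}-\op u\de,v^{k+1}-\tilde v\rangle .
\]
Grouping the terms containing $\op u^{k+1}$ gives $\langle \op u^{k+1},v\de+v^{k+1}-v\de-v^{k+1}+\tilde v\rangle=\langle \op u^{k+1},\tilde v\rangle$. Grouping the terms containing $\op u\de$ gives $-\langle \op u\de,v^{k+1}-v\de+v\de-v^{k+1}+\tilde v\rangle=-\langle\op u\de,\tilde v\rangle$. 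What is left is $-\langle\op\tilde u,v^{k+1}-v\de\rangle$. The total is therefore exactly $\langle \op^*\tilde v,u^{k+1}-u\de\rangle-\langle \op\tilde u,v^{k+1}-v\de\rangle$, which is the required identity.

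No assumption on $u\de,v\de$ beyond their lying in the relevant domains is used, so the saddle-point property is not needed here. Once the identity is verified, the lemma follows directly from the sum of the two inequalities.
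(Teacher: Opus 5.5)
Your proposal is correct and follows essentially the same route as the paper. The paper also uses the optimality conditions of the two proximal steps, the subgradient and strong-convexity inequalities tested at $(u\de,v\de)$, and the three-point identity for $\frac12\|\cdot\|^2$; it then compresses the rearrangement of the bilinear terms into ``adding and subtracting'' $\langle \op u\de,v\de\rangle$, $\langle \op u\de,v^{k+1}\rangle$, $\langle \op^*v\de,u^{k+1}\rangle$, which you verify explicitly and correctly.
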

\begin{proof}
   Let $k\geq0$ be arbitrary but fixed. Using equalities \eqref{non-acc-alg} and  strong convexity of $R$, one has
\begin{gather}
    \begin{cases}
        \frac{v^k-v^{k+1}}{\sigma}+\op \tilde{u} \in \partial H_{f^{\delta}}^*(v^{k+1})\nonumber \\
        \frac{u^k-u^{k+1}}{\tau}-\op^{*} \tilde{v} \in \partial R\left(u^{k+1}\right)\nonumber
    \end{cases} \Rightarrow\\
       \forall v\in \Y, \quad  H_{f^{\delta}}^*(v) - H_{f^{\delta}}^*(v^{k+1}) \geq \left\langle\frac{v^k-v^{k+1}}{\sigma}, v-v^{k+1}\right\rangle + \langle \op \tilde{u} , v-v^{k+1}\rangle, \label{Hconv}
\end{gather} 
\begin{equation}\label{Rstrconv}
\;\forall u\in \X, \quad  R(u)-R(u^{k+1}) \geq \left\langle\frac{u^k-u^{k+1}}{\tau}, u-u^{k+1}\right\rangle-\langle \op (u-u^{k+1}), \tilde{v}\rangle + \frac{\gamma}{2}\Vert u-u^{k+1}\Vert^2.
\end{equation}
Summing \eqref{Hconv} and \eqref{Rstrconv}, employing the three-point lemma for $\frac{\|\cdot\|^2}{2}$ and using $(u,v) = (u\de,v\de)$ give
\begin{align*}
    0 \geq & H_{f^{\delta}}^*(v^{k+1}) - H_{f^{\delta}}^*(v\de) + R(u^{k+1}) - R(u\de) + \langle \op \tilde{u} , v\de-v^{k+1}\rangle - \langle \op (u\de-u^{k+1}), \tilde{v}\rangle+\\
     & \frac{1}{\sigma}d_v + \frac{1}{\tau}d_u + \frac{\gamma}{2}\Vert u\de-u^{k+1}\Vert^2.
\end{align*}
By adding and subtracting  $\langle \op u\de,v\de\rangle$, $\langle \op u\de, v^{k+1} \rangle$, $ \langle \op^*v\de, u^{k+1} \rangle$, one gets the conclusion.
\end{proof}

Now we focus on the accelerated algorithm.
Our goal is to derive telescoping sums for appropriate terms by using an approach similar to that in Section 5 of \cite{chambolle2011first}.
\begin{lemma}
    Consider the accelerated Algorithm \ref{acc-alg}.
Then, for any $k\geq0$, one gets
    \begin{align}\label{recursive-ineq}
    \frac{\Delta_k}{\tau_k} \geq & \frac{\Delta_{k+1}}{\tau_{k+1}}  + \frac{2}{\tau_k}\langle \op (u^{k+1}-u^k), v^{k+1}-v\de\rangle - \frac{2}{\tau_{k-1}} \langle \op (u^{k}-u^{k-1}), v^{k}-v\de\rangle  - \nonumber \\
    &  - \frac{1}{\tau_{k-1}^2}\|u^k-u^{k-1}\|^2 + \frac{\Vert u^k-u^{k+1}\Vert^2}{\tau_k^2}  - \frac{2C\delta\|v^{k+1}-v\de\|}{\tau_k},
\end{align}
where 
$
    \Delta_k = \frac{\|v\de-v^k\|^2}{\sigma_k} + \frac{\|u\de-u^k\|^2}{\tau_k}.
$
\end{lemma}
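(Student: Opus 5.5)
The plan is to apply Lemma \ref{lemma-non-acc} to step $k$ of Algorithm \ref{acc-alg}, with $\tau=\tau_k$, $\sigma=\sigma_k$, $\tilde v=v^{k+1}$ and $\tilde u=\tilde u^k=u^k+\theta_{k-1}(u^k-u^{k-1})$. For $k=0$ we use $\tilde u^0=u^0=u^{-1}$, so the $\theta_{-1}$ term vanishes whatever the value of $\tau_{-1}$. Choosing $\tilde v=v^{k+1}$ makes the term $\langle \op(u^{k+1}-u\de),v^{k+1}-\tilde v\rangle$ in \eqref{ineq-non-acc} vanish. We then multiply the resulting inequality by $2/\tau_k$ and check that every piece has the shape claimed in \eqref{recursive-ineq}.

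\textbf{Step 1: the first line of \eqref{ineq-non-acc}.} We show it is at least $-C\delta\|v^{k+1}-v\de\|$. By \ref{redassump0} with index $k+1$,
\[
H_{f^\delta}^*(v^{k+1})-H_{f^\delta}^*(v\de)\ \ge\ H_f^*(v^{k+1})-H_f^*(v\de)-C\delta\|v^{k+1}-v\de\| .
\]
Since $\op u\de\in\partial H_f^*(v\de)$ by \eqref{OC1-gen-n}, the quantity $H_f^*(v^{k+1})-H_f^*(v\de)-\langle \op u\de,v^{k+1}-v\de\rangle$ is a Bregman distance and hence nonnegative. Similarly, with $G=0$ the source condition \ref{source-condition} gives $-\op^*v\de\in\partial R(u\de)$, so $R(u^{k+1})-R(u\de)+\langle \op^* v\de,u^{k+1}-u\de\rangle\ge 0$. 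After multiplying by $2/\tau_k$, this produces the term $-2C\delta\|v^{k+1}-v\de\|/\tau_k$.

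\textbf{Step 2: the coupling term.} This is the main obstacle. We write
\[
\langle \op(u^{k+1}-\tilde u^k),v^{k+1}-v\de\rangle=\langle \op(u^{k+1}-u^k),v^{k+1}-v\de\rangle-\theta_{k-1}\langle \op(u^k-u^{k-1}),v^{k}-v\de\rangle-\theta_{k-1}\langle \op(u^k-u^{k-1}),v^{k+1}-v^k\rangle .
\]
The last piece is bounded below by Young's inequality:
\[
-\theta_{k-1}\langle \op(u^k-u^{k-1}),v^{k+1}-v^k\rangle\ \ge\ -\frac{\|v^{k+1}-v^k\|^2}{2\sigma_k}-\frac{\theta_{k-1}^2\sigma_k\|\op\|^2}{2}\|u^k-u^{k-1}\|^2 .
\]
The first of these two terms cancels exactly against the $\frac{1}{2\sigma_k}\|v^k-v^{k+1}\|^2$ contained in $d_v/\sigma_k$.

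\textbf{Step 3: use the step-size relations.} The algorithm gives $\theta_{k-1}=\tau_k/\tau_{k-1}$, so that $\theta_{k-1}/\tau_k=1/\tau_{k-1}$. It also keeps $\sigma_k\tau_k=\sigma_0\tau_0$, hence $\sigma_k\tau_k\|\op\|^2<1$. After multiplication by $2/\tau_k$:
\begin{itemize}
\item the middle inner product becomes $-\frac{2}{\tau_{k-1}}\langle \op(u^k-u^{k-1}),v^k-v\de\rangle$;
\item the Young remainder becomes $-\frac{\sigma_k\tau_k\|\op\|^2}{\tau_{k-1}^2}\|u^k-u^{k-1}\|^2\ge-\frac{1}{\tau_{k-1}^2}\|u^k-u^{k-1}\|^2$.
\end{itemize}

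\textbf{Step 4: the remaining quadratic terms.}
\begin{itemize}
\item $d_u/\tau_k$ contributes $\|u^k-u^{k+1}\|^2/\tau_k^2$ and $-\|u\de-u^k\|^2/\tau_k^2$.
\item $d_v/\sigma_k$ contributes $-\|v\de-v^k\|^2/(\sigma_k\tau_k)$. Together with the previous item this is $-\Delta_k/\tau_k$.
\item The $u^{k+1}$ terms from $d_u/\tau_k$ and from the strong convexity term combine into $\frac{1+2\gamma\tau_k}{\tau_k^2}\|u\de-u^{k+1}\|^2=\frac{\|u\de-u^{k+1}\|^2}{\tau_{k+1}^2}$, using $\tau_{k+1}=\tau_k/\sqrt{1+2\gamma\tau_k}$.
\item The $v^{k+1}$ term is $\frac{\|v\de-v^{k+1}\|^2}{\sigma_k\tau_k}=\frac{\|v\de-v^{k+1}\|^2}{\sigma_{k+1}\tau_{k+1}}$.
\end{itemize}
The last two items together give $\Delta_{k+1}/\tau_{k+1}$. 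Moving $\Delta_k/\tau_k$ to the left-hand side yields \eqref{recursive-ineq}.
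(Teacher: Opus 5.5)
Your route is the paper's: apply Lemma \ref{lemma-non-acc} with $\tau=\tau_k$, $\sigma=\sigma_k$, $\tilde v=v^{k+1}$, $\tilde u=\tilde u^k$, bound the $H$ and $R$ terms, use Young's inequality on the extrapolation term, then use the step-size relations. Steps 2 and 3 are correct, and so are the $v$-terms and the $-\Delta_k/\tau_k$ bookkeeping in Step 4. The gap is the coefficient of $\|u^\dagger-u^{k+1}\|^2$. In Step 1 you only use $R(u^{k+1})-R(u^\dagger)+\langle K^*v^\dagger,u^{k+1}-u^\dagger\rangle\ge 0$. So that coefficient gets only $\frac{1}{2\tau_k}$ from $d_u/\tau_k$ and $\frac{\gamma}{2}$ from the last term of \eqref{ineq-non-acc}. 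After multiplying by $2/\tau_k$ you get
\[
\frac{2}{\tau_k}\Bigl(\frac{1}{2\tau_k}+\frac{\gamma}{2}\Bigr)=\frac{1+\gamma\tau_k}{\tau_k^2},
\]
not $\frac{1+2\gamma\tau_k}{\tau_k^2}=\frac{1}{\tau_{k+1}^2}$ as you claim. You would have to replace this coefficient by a strictly larger one in a lower bound, which goes the wrong way. So you cannot reach $\Delta_{k+1}/\tau_{k+1}$. Your bookkeeping only matches the rule $\tau_{k+1}=\tau_k/\sqrt{1+\gamma\tau_k}$, not $\theta_k=1/\sqrt{1+2\gamma\tau_k}$ from Algorithm \ref{acc-alg}.

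The missing idea is to use strong convexity a second time, now at the base point $u^\dagger$. By \ref{A7}, with the subgradient $-K^*v^\dagger\in\partial R(u^\dagger)$ from \ref{source-condition},
\[
R(u^{k+1})-R(u^\dagger)+\langle K^*v^\dagger,u^{k+1}-u^\dagger\rangle\ \ge\ \frac{\gamma}{2}\|u^{k+1}-u^\dagger\|^2 .
\]
This is what the paper does when it bounds the first two rows by $\frac{\gamma}{2}\|u^{k+1}-u^\dagger\|^2-C\delta\|v^{k+1}-v^\dagger\|$. The $\frac{\gamma}{2}$ already in Lemma \ref{lemma-non-acc} comes from strong convexity at the other base point, $u^{k+1}$, via the prox inclusion. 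The two inequalities are independent and add up; together they are strong monotonicity of $\partial R$ with constant $\gamma$. The coefficient then becomes $\frac{1}{2\tau_k}+\gamma$, which after multiplication by $2/\tau_k$ equals $\frac{1+2\gamma\tau_k}{\tau_k^2}=\frac{1}{\tau_{k+1}^2}$. With that change, the rest of your argument goes through as written.
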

\begin{proof}
Notice that Algorithm \ref{acc-alg} is obtained from \eqref{non-acc-alg} by considering dynamic steps $(\tau_k,\sigma_k)$ (instead of $(\tau,\sigma)$ in \eqref{non-acc-alg}) and by setting $\tilde{u} = u^k + \theta_{k-1}(u^k-u^{k-1})$, $\tilde{v} = v^{k+1}$. To prove \eqref{recursive-ineq}, we apply Lemma \ref{lemma-non-acc} for these choices.
Let $k\geq0$ be fixed. Inequality \eqref{ineq-non-acc} becomes
\begin{align}\label{ineqacc}
    \frac{\Delta_k}{2}  \geq & H_{f^{\delta}}^*(v^{k+1}) - H_{f^{\delta}}^*(v\de) -\langle \op u\de, v^{k+1}-v\de \rangle + R(u^{k+1}) - R(u\de)  + \langle \op^*v\de, u^{k+1}-u\de \rangle +  \nonumber \\
    & \langle \op (u^{k+1} - u^k - \theta_{k-1}(u^k-u^{k-1})) , v^{k+1} - v\de\rangle - \langle \op (u^{k+1} - u\de), v^{k+1}- v^{k+1} \rangle +\nonumber\\
    & \frac{\Vert v^k-v^{k+1}\Vert^2}{2\sigma_k} + \frac{\Vert v\de-v^{k+1}\Vert^2}{2\sigma_k}  + \frac{\Vert u^k-u^{k+1}\Vert^2}{2\tau_k} + \frac{\Vert u\de-u^{k+1}\Vert^2}{2\tau_k}  + \frac{\gamma}{2}\Vert u\de-u^{k+1}\Vert^2.
\end{align}
To obtain telescoping sums, we rewrite the terms on the right-hand side that involve the functions $H_{f^\delta}^*$ and $R$.
Recall  source condition \ref{source-condition} and   relation \eqref{opt-con} in this setting where $G=0$: 
\begin{align}\label{optim-cond-PDHG}
    \begin{cases}
        \op u\de \in \partial H_f^* (v\de)\\
        -\op^*v\de\in\partial R(u\de).
    \end{cases}
\end{align}

 Since they use the data fidelity  $H_f$ for exact data, we need to reformulate the first two rows of the right-hand side of \eqref{ineqacc} which contain $H_{f^{\delta}}$, as follows:
\begin{align*}
    & H_{f^{\delta}}^*(v^{k+1}) - H_{f^{\delta}}^*(v\de) -\langle \op u\de, v^{k+1}-v\de \rangle + R(u^{k+1}) - R(u\de)  + \langle \op^*v\de, u^{k+1}-u\de \rangle = \\
    &  H_{f}^*(v^{k+1}) - H_{f}^*(v\de)  -\langle \op u\de, v^{k+1}-v\de \rangle + R(u^{k+1}) - R(u\de)  + \langle \op^*v\de, u^{k+1}-u\de\rangle + \\ 
    & H_{f^{\delta}}^*(v^{k+1}) - H_{f^{\delta}}^*(v\de)  - H_{f}^*(v^{k+1}) + H_{f}^*(v\de) \color{black}\geq \frac{\gamma}{2}\Vert u^{k+1}-u\de\Vert^{2} -  C\delta\Vert v^{k+1}-v\de\Vert  .
\end{align*}
For the last inequality, we employed \ref{redassump0}, the strong convexity of $R$, and the convexity of $H_f^*$ for the corresponding subgradients from \eqref{optim-cond-PDHG}.
By substituting this in \eqref{ineqacc}, one has 
\begin{align*}
    \frac{\Delta_k}{2} \geq & \frac{1}{2 \sigma_{k}} \left\Vert v\de-v^{k+1}\right\Vert^{2}+\left(\frac{1}{2 \tau_{k}}+\gamma\right)\left\Vert u\de-u^{k+1}\right\Vert^{2}+ \langle \op (u^{k+1}-u^k), v^{k+1}-v\de\rangle \\
    &  - \theta_{k-1} \langle \op (u^{k}-u^{k-1}), v^{k+1}-v\de\rangle + \frac{\Vert v^k-v^{k+1}\Vert^2}{2\sigma_k} + \frac{\Vert u^k-u^{k+1}\Vert^2}{2\tau_k}  - C\delta\|v^{k+1}-v\de\|.
\end{align*}

We rewrite the following term by adding and subtracting $v^k$, and apply Young inequality,
\begin{align*}
    & \theta_{k-1} \langle \op (u^{k}-u^{k-1}), v^{k+1}-v\de\rangle 
    =  \theta_{k-1} \langle \op (u^{k}-u^{k-1}), v^{k+1}-v^k\rangle + \theta_{k-1} \langle \op (u^{k}-u^{k-1}), v^{k}-v\de\rangle\leq\\
    &  \frac{\Vert v^{k+1}-v^k\Vert^2}{2\sigma_k} + \frac{\theta_{k-1}^2\Vert \op \Vert^2 \|u^k-u^{k-1}\|^2\sigma_k}{2} + \theta_{k-1} \langle \op (u^{k}-u^{k-1}), v^{k}-v\de\rangle .
\end{align*}

We plug this into the previous inequality and denote by $r_k:=\langle \op (u^{k+1}-u^k), v^{k+1}-v\de\rangle - \theta_{k-1} \langle \op (u^{k}-u^{k-1}), v^{k}-v\de\rangle -\frac{\theta_{k-1}^2\Vert \op \Vert^2 \|u^k-u^{k-1}\|^2\sigma_k}{2} $ to get
\begin{align*}
    \frac{\Delta_k}{2} \geq & \frac{1}{2 \sigma_{k}}\left\Vert v\de-v^{k+1}\right\Vert^{2}+\left( \frac{1}{2 \tau_{k}}+\gamma\right)\left\Vert u\de-u^{k+1}\right\Vert^{2}+ r_k - \frac{\Vert v^{k+1}-v^k\Vert^2}{2\sigma_k}  +  \\
    & \frac{\Vert v^k-v^{k+1}\Vert^2}{2\sigma_k} + \frac{\Vert u^k-u^{k+1}\Vert^2}{2\tau_k}  - C\delta\|v^{k+1}-v\de\|,
\end{align*}
which becomes
\begin{align}\label{telescoping-sum-before-seq-choice}
    & \Delta_k \geq  \sigma_{k+1}\frac{1}{ \sigma_{k}}  \frac{\Vert v\de-v^{k+1}\Vert^{2}}{ \sigma_{k+1}} + \tau_{k+1} \color{black} \left(\frac{1}{\tau_{k}}+2\gamma\right)\frac{\Vert u\de-u^{k+1}\Vert^{2}}{\tau_{k+1}}  + 2 r_k  + \frac{\Vert u^k-u^{k+1}\Vert^2}{\tau_k}  - 2C\delta\|v^{k+1}-v\de\|.
\end{align}

We aim to get a telescoping sum for the terms $\frac{\Delta_k}{\tau_k}$. As described by Algorithm \ref{acc-alg}, choose $(\tau_k), (\sigma_k)\subset(0,\infty)$ such that $ \sigma_0\tau_0\|\op \|^2<1$ and
\begin{equation}\label{seq-choice}
    \frac{\sigma_{k+1}}{\sigma_k}  = \frac{\tau_{k+1}}{\tau_k} \left(1+2\gamma\tau_k\right) = \frac{\tau_k}{\tau_{k+1}}.
\end{equation}
This implies  $
    \tau_{k+1} = \frac{\tau_k}{\sqrt{1+2\gamma\tau_k}}
$
and transforms inequality \eqref{telescoping-sum-before-seq-choice} into
\begin{align*}
   & \Delta_k \geq  \frac{\tau_k}{\tau_{k+1}} \frac{\Vert v\de-v^{k+1}\Vert^{2}}{ \sigma_{k+1}}  + \frac{\tau_k}{\tau_{k+1}}  \frac{\Vert u\de-u^{k+1}\Vert^{2}}{\tau_{k+1}}  +2r_k + \frac{\Vert u^k-u^{k+1}\Vert^2}{\tau_k}  - 2C\delta\|v^{k+1}-v\de\|.
\end{align*}
By dividing both sides by $\tau_k>0$, and
substituting $\theta_{k-1} := \frac{\tau_k}{\tau_{k-1}}$ as in Algorithm \ref{acc-alg}, it follows that

\begin{align*}
    \frac{\Delta_k}{\tau_k} \geq & \frac{\Delta_{k+1}}{\tau_{k+1}}  + \frac{2}{\tau_k}\langle \op (u^{k+1}-u^k), v^{k+1}-v\de\rangle - \frac{2}{\tau_{k-1}} \langle \op (u^{k}-u^{k-1}), v^{k}-v\de\rangle  - \\
    &  - \frac{\tau_{k}}{\tau_{k-1}^2}\Vert \op \Vert^2 \|u^k-u^{k-1}\|^2\sigma_k + \frac{\Vert u^k-u^{k+1}\Vert^2}{\tau_k^2}  - \frac{2C\delta\|v^{k+1}-v\de\|}{\tau_k}.
\end{align*}
From \eqref{seq-choice} one has
\begin{equation}\label{prod-seq}
    \tau_k\sigma_k\|\op \|^2=\tau_0\sigma_0\|\op \|^2<1,
\end{equation} 
which, together with the previous inequality, yields the conclusion.
\end{proof}

The following lemma provides an intermediate estimate for $\|v\de-v^n\|^2$ and $\|u\de-u^n\|^2$, with a bound depending on the distance between the dual variables $v\de$ and $v^k,$ with $ 1\leq k\leq n$.
\begin{lemma}
For any $n\geq1$, the following inequality holds,
    \begin{equation} \label{estim-norm-ineq}
    \frac{1-\|\op \|^2 \sigma_n\tau_n }{\sigma_n\tau_n}\Vert v\de-v^n\Vert^2 + \frac{\Vert u\de-u^n\Vert^2}{\tau_n^2} \leq \frac{\Delta_0}{\tau_0}  + 2C\delta\sum_{k=1}^{n}\frac{\|v^{k}-v\de\|}{\tau_{k-1}}  .
    \end{equation}
\end{lemma}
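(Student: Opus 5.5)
The plan is to sum the recursive inequality \eqref{recursive-ineq} over $k=0,\dots,n-1$ and exploit its telescoping structure. Write
\[
c_k := \frac{2}{\tau_k}\langle \op (u^{k+1}-u^k), v^{k+1}-v\de\rangle, \qquad e_k := \frac{\|u^{k+1}-u^k\|^2}{\tau_k^2}.
\]
Then \eqref{recursive-ineq} reads $\frac{\Delta_k}{\tau_k} \geq \frac{\Delta_{k+1}}{\tau_{k+1}} + c_k - c_{k-1} - e_{k-1} + e_k - \frac{2C\delta\|v^{k+1}-v\de\|}{\tau_k}$.

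For $k=0$, the initialization $u^{-1}=u^0$ gives $c_{-1}=0$ and $e_{-1}=0$, so the value of $\tau_{-1}$ is irrelevant. Summing from $k=0$ to $n-1$ yields
\[
\frac{\Delta_0}{\tau_0} \geq \frac{\Delta_n}{\tau_n} + c_{n-1} + e_{n-1} - 2C\delta\sum_{k=1}^{n}\frac{\|v^k-v\de\|}{\tau_{k-1}}.
\]
The noise term already has exactly the form appearing in \eqref{estim-norm-ineq}, after the index shift $k+1\mapsto k$.

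It remains to bound $\frac{\Delta_n}{\tau_n} + c_{n-1} + e_{n-1}$ from below by the left-hand side of \eqref{estim-norm-ineq}. Here $\frac{\Delta_n}{\tau_n} = \frac{\|v\de-v^n\|^2}{\sigma_n\tau_n} + \frac{\|u\de-u^n\|^2}{\tau_n^2}$. For the cross term I apply Cauchy--Schwarz and then Young's inequality, choosing the weight so that it cancels $e_{n-1}$ completely:
\[
|c_{n-1}| \leq \frac{2\|\op\|}{\tau_{n-1}}\|u^n-u^{n-1}\|\,\|v^n-v\de\| \leq \frac{\|u^n-u^{n-1}\|^2}{\tau_{n-1}^2} + \|\op\|^2\|v^n-v\de\|^2 .
\]
The first term on the right is exactly $e_{n-1}$. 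Hence $c_{n-1}+e_{n-1} \geq -\|\op\|^2\|v^n-v\de\|^2$.

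Combining this with $\frac{\|v\de-v^n\|^2}{\sigma_n\tau_n}$ gives the coefficient
\[
\frac{1}{\sigma_n\tau_n} - \|\op\|^2 = \frac{1-\|\op\|^2\sigma_n\tau_n}{\sigma_n\tau_n}.
\]
This is positive by \eqref{prod-seq}, although positivity is not needed for the inequality itself. The $u$-term $\frac{\|u\de-u^n\|^2}{\tau_n^2}$ is left untouched, and the claim follows.

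The main point of care is the bookkeeping in the telescoping. One must check that the indices of the $c$ and $e$ terms align so that only the boundary terms at $k=-1$ and $k=n-1$ survive, and that the $k=-1$ boundary terms vanish by the initialization. One must also choose the Young weight so that no $u$-increment term remains, which is what produces the factor $1-\|\op\|^2\sigma_n\tau_n$.
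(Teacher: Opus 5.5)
Your proposal is correct and is essentially the paper's proof. You sum \eqref{recursive-ineq} over $k=0,\dots,n-1$; the $k=-1$ boundary terms vanish because $u^{-1}=u^0$; Young's inequality then absorbs the surviving cross term $\frac{2}{\tau_{n-1}}\langle \op(u^n-u^{n-1}), v^n-v^\dagger\rangle$ into $\frac{\|u^n-u^{n-1}\|^2}{\tau_{n-1}^2}$ at the cost of $\|\op\|^2\|v^n-v^\dagger\|^2$, which you regroup with $\frac{\|v^\dagger-v^n\|^2}{\sigma_n\tau_n}$. Your explicit tracking of the telescoping indices and of the Young weight simply spells out steps the paper states briefly.
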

\begin{proof}
    Let $n\geq1$ be fixed. By summing up \eqref{recursive-ineq} from $0$ to $n-1$ and using $u^{-1}=u^0$, one finds
\begin{align*} 
    & \frac{\Delta_0}{\tau_0} \geq  \frac{\Delta_{n}}{\tau_{n}}  + \frac{2}{\tau_{n-1}}\langle \op (u^{n}-u^{n-1}), v^{n}-v\de\rangle  + \frac{\Vert u^{n-1}-u^{n}\Vert^2}{\tau_{n-1}^2}  - 2C\delta\sum_{k=0}^{n-1}\frac{\|v^{k+1}-v\de\|}{\tau_k}.
\end{align*} 
By substituting  $\Delta_n$ and using Young inequality, one has
\begin{align*} 
    \frac{\Vert v\de-v^n\Vert^2}{\sigma_n\tau_n} + \frac{\Vert u\de-u^n\Vert^2}{\tau_n^2} & \leq 
     \frac{\Delta_0}{\tau_0}  + \|\op \|^2\| v^n-v\de\|^2  + 2C\delta\sum_{k=0}^{n-1}\frac{\|v^{k+1}-v\de\|}{\tau_k}.
\end{align*}
By regrouping the terms, the conclusion holds.
\end{proof}

The next lemma provides an upper bound for $\|v\de-v^n\|,$  $n\geq1$, which is independent of the previous iterates.

\begin{lemma}
Let $n\geq1$ and $\beta:=\frac{\sigma_0\tau_0}{1-\|\op \|^2\sigma_0\tau_0}$. The following estimate holds,
\begin{equation}\label{estim-seq-v}
     \Vert v\de- v^n\Vert\leq 2 C\delta\beta\sum_{k=1}^n\frac{1}{\tau_{k-1}}  + \sqrt{\beta\frac{\Delta_0}{\tau_0}} .
\end{equation}
\end{lemma}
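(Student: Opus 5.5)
Starting from \eqref{estim-norm-ineq}, I drop the nonnegative term $\|u\de-u^n\|^2/\tau_n^2$. By \eqref{prod-seq}, $\sigma_n\tau_n=\sigma_0\tau_0$, so the coefficient of $\|v\de-v^n\|^2$ equals $\frac{1-\|\op\|^2\sigma_0\tau_0}{\sigma_0\tau_0}=1/\beta$. Multiplying through by $\beta$ gives
\begin{equation*}
\|v\de-v^n\|^2\le \beta\frac{\Delta_0}{\tau_0}+\sum_{k=1}^n \frac{2C\delta\beta}{\tau_{k-1}}\,\|v\de-v^k\|,
\end{equation*}
valid for every $n\ge1$. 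This is exactly the recursive inequality \eqref{lemma-hyp} of Proposition \ref{lemma-Schmidt} with the choices $a_n:=\|v\de-v^n\|$, $S_n:=\beta\Delta_0/\tau_0$ (constant, hence non-decreasing) and $\lambda_k:=2C\delta\beta/\tau_{k-1}\ge0$.

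To apply Proposition \ref{lemma-Schmidt}, I still have to check its initial condition $a_1\le\frac12\lambda_1+\sqrt{S_1+\frac14\lambda_1^2}$. As in Step 3 of the proof of Theorem \ref{main-theo-part}, this is equivalent to $(a_1-\frac12\lambda_1)^2\le S_1+\frac14\lambda_1^2$ whenever $a_1\ge\frac12\lambda_1$, and it holds trivially otherwise. Expanding the square, it reduces to $a_1^2\le S_1+\lambda_1a_1$, which is precisely the displayed inequality for $n=1$.

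The proposition then yields
\begin{equation*}
a_n\le\frac12\sum_{k=1}^n\lambda_k+\sqrt{S_n+\Big(\frac12\sum_{k=1}^n\lambda_k\Big)^2}.
\end{equation*}
Using $\sqrt{a+b}\le\sqrt a+\sqrt b$, the right-hand side is bounded by $\sum_{k=1}^n\lambda_k+\sqrt{S_n}=2C\delta\beta\sum_{k=1}^n\tau_{k-1}^{-1}+\sqrt{\beta\Delta_0/\tau_0}$, which is \eqref{estim-seq-v}.

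There is no real obstacle here. The only points needing care are the identification of the coefficient via the invariance $\sigma_n\tau_n=\sigma_0\tau_0$, which also ensures $\beta>0$ because $\tau_0\sigma_0\|\op\|^2<1$, and the verification of the initial condition of Proposition \ref{lemma-Schmidt}, both handled above.
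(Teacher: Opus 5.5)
Your proposal is correct and follows the paper's proof essentially step for step. You drop the primal term in \eqref{estim-norm-ineq}, use $\sigma_n\tau_n=\sigma_0\tau_0$ to identify the coefficient $1/\beta$, apply Proposition \ref{lemma-Schmidt} with the same choices of $a_n$, $S_n$, $\lambda_n$ (checking the initial condition via the $n=1$ case of the recursive inequality), and conclude with $\sqrt{a+b}\le\sqrt{a}+\sqrt{b}$.
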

\begin{proof}
Since the second term on the left-hand side of \eqref{estim-norm-ineq} is non-negative and $\sigma_n \tau_n = \sigma_0 \tau_0$ holds, one obtains 
\begin{equation}\label{ineq-dual-var}
    \Vert v\de-v^n\Vert^2 \leq \beta \frac{\Delta_0}{\tau_0}  + 2C\delta\beta\sum_{k=1}^{n}\frac{\|v^k-v\de\|}{\tau_{k-1}}.
\end{equation}

We employ Proposition \ref{lemma-Schmidt} by making the following choices: $a_n := \Vert v\de- v^n\Vert\geq0,$ $ S_n := \beta \frac{\Delta_0}{\tau_0} \text{ non-decreasing},$ $\lambda_n := 2C\delta\beta\frac{1}{\tau_{n-1}} \geq 0.$
 Thus, we need to check  $\Vert v^1-v\de\Vert\leq  C\delta\beta\frac{1}{\tau_0} + \sqrt{\beta\frac{\Delta_0}{\tau_0} +  C^2\delta^2\beta^2\frac{1}{\tau_0^2}} $.
It suffices to show 
$
    \left(\Vert v^1-v\de\Vert - C\delta\beta\frac{1}{\tau_0}\right)^2\leq \beta\frac{\Delta_0}{\tau_0} +  C^2\delta^2\beta^2\frac{1}{\tau_0^2},
$
thus recovering \eqref{ineq-dual-var} for $n=1$.

Then, for every $n\geq 1$, we have
\begin{align*}
     \Vert v\de- v^n\Vert\leq& C\delta\beta\sum_{k=1}^n\frac{1}{\tau_{k-1}}  + \sqrt{\beta\frac{\Delta_0}{\tau_0}  + C^2\delta^2\beta^2  \left(\sum_{k=1}^n\frac{1}{\tau_{k-1}}\right)^2} \leq 2 C\delta\beta\sum_{k=1}^n\frac{1}{\tau_{k-1}}  + \sqrt{\beta\frac{\Delta_0}{\tau_0}},
\end{align*}
where the last inequality follows from $\sqrt{a+b}\leq\sqrt{a}+\sqrt{b},$ for any $a,b\geq0.$
\end{proof}
The  theorem below shows estimates for the primal iterates.
\begin{theo}\label{theo-acc}
The following estimation holds,
\begin{align}\label{estimate-acc-alg}
        \Vert u\de-u^n\Vert^2 \sim  & \frac{1}{\gamma^2n^2}\left(\frac{\Vert v\de-v^0\Vert^2}{\sigma_0\tau_0} + \frac{\Vert u\de-u^0\Vert^2}{\tau_0^2} \right)+ \nonumber\\
        &\frac{1}{\gamma^2n^2}\left( C \gamma \sqrt{\beta\left(\frac{\Vert v\de-v^0\Vert^2}{\sigma_0\tau_0} + \frac{\Vert u\de-u^0\Vert^2}{\tau_0^2}\right)} \delta n^2+\frac{1}{2}C^2\beta \gamma^2 \delta^2 n^4 \right),
\end{align}
where $\beta = \frac{\sigma_0\tau_0}{1-\|\op \|^2\sigma_0\tau_0}, \gamma$ is a strong convexity constant for $R$ and $C$ is the constant from \ref{redassump0}.

In particular, for $n_{\delta}\sim\frac{1}{\sqrt{\delta}}$, one has
$
     \Vert u\de-u^{n_{\delta}}\Vert^2  = O(\delta).
$
\end{theo}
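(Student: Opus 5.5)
The plan is to combine the two preceding lemmas: insert the dual estimate \eqref{estim-seq-v} into the right-hand side of \eqref{estim-norm-ineq}, and then use the known asymptotics of the step sizes generated by Algorithm \ref{acc-alg}. From \eqref{estim-norm-ineq}, dropping the nonnegative dual term (its coefficient is positive by \eqref{prod-seq}), we get
\[
\frac{\|u\de-u^n\|^2}{\tau_n^2}\le \frac{\Delta_0}{\tau_0}+2C\delta\sum_{k=1}^n\frac{\|v^k-v\de\|}{\tau_{k-1}}.
\]
Note that $\Delta_0/\tau_0$ equals the bracket $\frac{\|v\de-v^0\|^2}{\sigma_0\tau_0}+\frac{\|u\de-u^0\|^2}{\tau_0^2}$ appearing in \eqref{estimate-acc-alg}.

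Next, apply \eqref{estim-seq-v} to each $\|v^k-v\de\|$ with $k\le n$. Writing $T_k:=\sum_{j=1}^k \tau_{j-1}^{-1}$, this gives
\[
2C\delta\sum_{k=1}^n\frac{1}{\tau_{k-1}}\Big(2C\delta\beta T_k+\sqrt{\beta\Delta_0/\tau_0}\Big)\le 4C^2\delta^2\beta\, T_n^2+2C\delta\sqrt{\beta\Delta_0/\tau_0}\,T_n .
\]
Here we use $\sum_k \tau_{k-1}^{-1}T_k\le T_n^2$. A sharper version, $\sum_k a_kT_k=\tfrac12(T_n^2+\sum a_k^2)$, would give the factor $\tfrac12$ appearing in the statement up to lower-order terms. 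Multiplying through by $\tau_n^2$ yields
\[
\|u\de-u^n\|^2\le \tau_n^2\Big(\frac{\Delta_0}{\tau_0}+2C\delta\sqrt{\beta\Delta_0/\tau_0}\,T_n+cC^2\beta\delta^2T_n^2\Big).
\]

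The remaining step, which I expect to be the main technical point, is the step-size asymptotics. By Chambolle--Pock (Corollary 1 / Lemma 2 in \cite{chambolle2011first}), the recursion $\tau_{k+1}=\tau_k/\sqrt{1+2\gamma\tau_k}$ satisfies $\gamma\tau_n n\to1$, i.e.\ $\tau_n\sim 1/(\gamma n)$. This follows from $1/\tau_{k+1}^2=1/\tau_k^2+2\gamma/\tau_k$, which gives a recursion for $1/\tau_k$ growing linearly with slope tending to $\gamma$. Consequently $1/\tau_{k-1}\sim\gamma k$, so $T_n\sim\gamma n^2/2$ and $\tau_n^2\sim1/(\gamma^2n^2)$. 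Substituting these into the last bound produces exactly the three terms of \eqref{estimate-acc-alg}: $\frac{1}{\gamma^2n^2}\Delta_0/\tau_0$, the cross term of order $\frac{1}{\gamma^2n^2}C\gamma\sqrt{\beta\Delta_0/\tau_0}\,\delta n^2$, and the quadratic term of order $\frac{1}{\gamma^2n^2}C^2\beta\gamma^2\delta^2n^4$. Because the statement uses $\sim$, only the orders and asymptotic constants need to be tracked, not sharp constants.

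Finally, setting $n_\delta\sim\delta^{-1/2}$, the three terms behave like $\delta$, $\delta$, and $\delta^2n^2=\delta$ respectively. Hence $\|u\de-u^{n_\delta}\|^2=O(\delta)$.
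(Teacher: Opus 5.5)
Your proposal is correct and follows the paper's proof essentially step for step. You insert \eqref{estim-seq-v} into \eqref{estim-norm-ineq}, drop the dual term using \eqref{prod-seq}, and use $\tau_n\sim 1/(\gamma n)$ from \cite{chambolle2011first}; the only cosmetic difference is that you evaluate the double sum through $T_k$ and the identity $\sum_k a_kT_k=\tfrac12\bigl(T_n^2+\sum_k a_k^2\bigr)$ instead of splitting at the index $N_0$, which recovers the same constants $C\gamma\sqrt{\beta\Delta_0/\tau_0}$ and $\tfrac12 C^2\beta\gamma^2$ a bit more transparently.
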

\begin{proof}
We plug \eqref{estim-seq-v} in \eqref{estim-norm-ineq} to get an estimate for $\|u\de-u^n\|^2$.
The coefficient of the first term in \eqref{estim-norm-ineq} is non-negative due to \eqref{prod-seq}, thus yielding
\begin{align*} 
    & \Vert u\de-u^n\Vert^2 \leq \tau_n^2\left(\frac{\Delta_0}{\tau_0}  + 2C\delta \sum_{k=1}^{n}\frac{\|v^k-v\de\|}{\tau_{k-1}}\right).
\end{align*}
By using \eqref{estim-seq-v} in this inequality, one gets 
\begin{equation} \label{estim-seq-u}
    \Vert u\de-u^n\Vert^2 \leq \tau_n^2\left(\frac{\Delta_0}{\tau_0}  + 2C\delta \sum_{k=1}^{n}\frac{1}{\tau_{k-1}}\left( 2 C\delta\beta\sum_{i=1}^k\frac{1}{\tau_{i-1}}  + \sqrt{\beta\frac{\Delta_0}{\tau_0}} \right)\right).
\end{equation}
For simplicity, denote the second term of the outer parenthesis by $\mu_n$, which will be estimated in the sequel. 
According to Section 5 from \cite{chambolle2011first},  there exists some $N_0$ such that for all $k\geq N_0,$ $\tau_k\sim \frac{1}{\gamma k}$. We therefore split the sums in $\mu_n$ at the index $N_0$ and take into account this asymptotic behavior for all indices beyond $N_0$, resulting in

\begin{align*}
    &\mu_n = \underbrace{2C\delta \sum_{k=1}^{N_0}\frac{1}{\tau_{k-1}}\left( 2C\delta\beta\sum_{i=1}^k\frac{1}{\tau_{i-1}}
 + \sqrt{\beta\frac{\Delta_0}{\tau_0}}\right)}_{\bar{\mu}_n} + \underbrace{2C\delta\sum_{k=N_0+1}^{n}\frac{1}{\tau_{k-1}}\left( 2 C\delta\beta\sum_{i=1}^k\frac{1}{\tau_{i-1}} + \sqrt{\beta\frac{\Delta_0}{\tau_0}} \right)}_{\tilde{\mu}_n}.
 \end{align*}
 We estimate $\mu_n, \tilde{\mu}_n$ as follows,
 \begin{align*}
     \bar{\mu}_n = & 4C^2\delta^2\beta\sum_{k=1}^{N_0}\left(\frac{1}{\tau_{k-1}}\sum_{i=1}^k\frac{1}{\tau_{i-1}}\right) + 2C\delta \sum_{k=1}^{N_0}\frac{1}{\tau_{k-1}}\sqrt{\beta\frac{\Delta_0}{\tau_0}} \sim \delta^2+\delta,
 \end{align*}

 \begin{align*}
\tilde{\mu}_n & \sim  2C\delta\gamma\sum_{k=N_0}^{n-1}k\left( 2 C\delta\beta\sum_{i=1}^{N_0}\frac{1}{\tau_{i-1}} + 2 C\delta\beta\gamma\sum_{i=N_0}^{k-1}i + \sqrt{\beta\frac{\Delta_0}{\tau_0}} \right)\\
 & \sim C\delta\gamma\left(2C\delta\beta\sum_{i=1}^{N_0}\frac{1}{\tau_{i-1}} + \sqrt{\beta\frac{\Delta_0}{\tau_0}} \right)((n-1)n-(N_0-1)N_0) +  \\
 & 2C^2\delta^2\beta\gamma^2 \sum_{k=N_0}^{n-1}k ((k-1)k-(N_0-1)N_0)\\
 & \sim  C \gamma \sqrt{\beta\frac{\Delta_0}{\tau_0}}\delta n^2 +\frac{1}{2}C^2\beta\gamma^2 \delta^2 n^4.
\end{align*}
Plugging this back in \eqref{estim-seq-u} yields
\begin{align*}
        \Vert u\de-u^n\Vert^2 \sim \frac{1}{\gamma^2n^2}\left(\frac{\Delta_0}{\tau_0}  + C \gamma\sqrt{\beta\frac{\Delta_0}{\tau_0}} \delta n^2 +\frac{1}{2}C^2\beta \gamma^2\delta^2 n^4 \right).
\end{align*}

If one chooses $n_{\delta}$ large enough such that $n_{\delta}\sim\frac{1}{\sqrt{\delta}}$, then one obtains
\begin{align*}
     \Vert u\de-u^{n_{\delta}}\Vert^2 \sim \frac{\delta}{\gamma^2}\left(\frac{\Delta_0}{\tau_0}   + C\gamma\sqrt{\beta \frac{\Delta_0}{\tau_0}} +2C^2\beta\gamma^2 \right), \text{i.e., }
     \Vert u\de-u^{n_{\delta}}\Vert^2  = O(\delta).
\end{align*}
\end{proof}

\begin{remark}
Letting $\delta=0$ in \eqref{estim-seq-u} implies  
$\|u\de-u^n\|^2\sim \frac{1}{n^2}\cdot \frac{\Delta_1}{\tau_1\gamma^2},$ $n\geq N_0,$
which is the rate in \cite{chambolle2011first}.
\end{remark}
\begin{remark}
    For an acceleration of the PDHG algorithm in reflexive Banach spaces, we refer the reader to \cite{darbon2021accelerated}, where the authors study convergence in the case of  exact data. 
    While this approach can be extended to noisy data, the resulting analysis is more technical due to the choice of the associated parameters and lies beyond the scope of this paper.
    \color{black}
\end{remark}

\section{Particular algorithms in Hilbert spaces}\label{sect:concrete-frameworks}

As announced in the Introduction, we specialize our general results to two important choices of data fidelity within the Hilbert space framework. Our main contributions in this setting are error estimates for the non-accelerated Condat-Vũ and accelerated PDHG algorithms applied to Morozov regularization, together with error estimates for the accelerated scheme applied to the equality constrained problem.

\subsection{The equality constraint case}
Consider $H_{f^{\delta}} = i_{\{f^{\delta}\}}$ as in \cite{molinari2024iterative}. Thus, we approach \eqref{intitial-pb-exact} via
\begin{equation*}
    \inf\limits_{u\in \X}\{ R(u)+G(u)\} \text{ such that } \op u=f^{\delta}.
\end{equation*}

The Fenchel conjugate of the data fidelity is $H_{f^{\delta}}^*(z^*) = \langle z^*, f^{\delta}\rangle$ and the subdifferential is $\partial H_{f^{\delta}}^*(z^*) = \{f^{\delta}\}$. In this particular framework, both \ref{A10} and \ref{redassump0} amount to 
\begin{equation*}
    \forall k\in\mathbb{N}, \; \langle f^{\delta} - f, v\de - v^k \rangle \leq \delta \| v\de-v^k\|,
\end{equation*}
which is clearly satisfied due to \eqref{eq:noise_level}.
Moreover, Algorithm \ref{alg:Condat-Vu-Hilbert} amounts to the following.
   \begin{algorithm}[H]
    \caption{The Condat-V\~u algorithm for equality constrained problems}
    \label{Condat-Vu-al-4}
    \begin{algorithmic}[1]
    \State \textbf{Input:} Initial values $u^0\in\operatorname{dom }G', v^0$, step sizes $\tau, \sigma > 0$, data $f^\delta$
    \For{$k = 0, 1, 2, \dots$ }
        \State $u^{k+1} \gets \text{prox}_{\tau R}  \left( u^{k}- \tau (\op^* v^{k} + G'(u^{k})) \right) $
        \State $v^{k+1} \gets  v^{k} + \sigma \op (2u^{k+1} - u^{k})-\sigma f^{\delta} $
    \EndFor
    \State \textbf{Return:} $u^{k+1}, v^{k+1}$
    \end{algorithmic}
    \end{algorithm}
Algorithm \ref{Condat-Vu-al-4} is  very similar to the one from \cite{molinari2024iterative}, the main difference being that the extrapolation step is done for the primal variable $(u^k)$ in the first case (see above) and for the dual $(v^k)$ in the second one - see below:
\begin{gather*}
    \begin{cases}
        u^{k+1} = \text{prox}_{\tau R}  \left( u^{k}- \tau (\op^* (2v^{k+1}-v^{k}) + G'(u^{k})) \right) \\
        v^{k+1} =  v^{k} + \sigma \op u^{k+1}-\sigma f^{\delta}.
    \end{cases}
\end{gather*}
 We omit the full description of the accelerated procedure in this particular instance, pointing out just the corresponding dual update from Algorithm \ref{acc-alg}: $v^{k+1} =  v^{k} + \sigma_k \op \tilde u^{k}-\sigma_k f^{\delta}.$ 

\subsection{The inequality constraint case - Morozov regularization}
Let $H_{f^{\delta}} = i_{C^{\delta}},$ with $C^{\delta} = \{ z : \|f^{\delta}-z\|\leq \delta\}$, which leads to the Morozov regularization problem \eqref{morozov-reg}. We verify below that our assumptions are satisfied in this setting.

The Fenchel conjugate of $H_{f^{\delta}}$ is given by
$
    H_{f^{\delta}}^*(z^*) = \langle z^*, f^{\delta}\rangle + \delta \|z^*\|.
$
Assuming that the source element $v\de$ does not vanish, one can calculate the subdifferential of $H_{f^{\delta}}^*$ at $v\de$ with the help of  Theorem 4.6 in \cite{clason2020introduction}, that is
$
    \partial H_{f^{\delta}}^*(v\de)=f^{\delta}+\delta\cdot\{x^*: \langle x^*,v\de\rangle=\|v\de\|, \|x^*\|=1\}.
$

We now show that \ref{A10} holds.
Let $\zeta^{\delta} \in \partial H_{f^{\delta}}^{*}\left(v\de\right)$ and $\zeta \in \partial H_{f}^{*}\left(v\de\right)$. Then there exists $x^*$ such that  $\langle x^*,v\de\rangle=\|v\de\|$, $\|x^*\|=1$ and $\zeta^{\delta} = f^{\delta} + \delta x^*$. One can evaluate the left-hand side of \eqref{hypo-noisy} to get
$
    \langle f^{\delta} + \delta x^* - f, v\de-v^k \rangle = \langle f^{\delta}  - f, v\de-v^k \rangle + \delta \langle x^* , v\de-v^k \rangle \leq 2\delta \|v\de-v^k\|.
$
Similarly, one can check that assumption \ref{redassump0} is satisfied.

Algorithm \ref{alg:Condat-Vu-Hilbert} takes the following form, given that the dual update has a closed-form expression, cf. \cite{Combettes2005}.
   \begin{algorithm}[H]
    \caption{The Condat-V\~u algorithm for Morozov regularization}
    \label{Condat-Vu-alg-6}
    \begin{algorithmic}[1]
    \State \textbf{Input:} Initial values $u^0\in\operatorname{dom }G', v^0$, step sizes $\tau, \sigma > 0$, data $f^\delta$
    \For{$k = 0, 1, 2, \dots$ }
        \State $u^{k+1} \gets \text{prox}_{\tau R} \left(  u^{k} - \tau (\op^* v^{k} + G'(u^{k})) \right) $
        \State $v^{k+1} \gets \max\left( 1-\frac{\sigma\delta}{\|v^{k}+\sigma \op (2u^{k+1} - u^{k})-\sigma f^{\delta}\|},0\right) (v^{k} + \sigma \op (2u^{k+1} - u^{k})-\sigma f^{\delta}) $
    \EndFor
    \State \textbf{Return:} $u^{k+1}, v^{k+1}$
    \end{algorithmic}
    \end{algorithm} 
We also omit a full presentation of the corresponding specialized accelerated scheme.

\section{The Condat-V\~u algorithm with noisy data in the Banach space setting}\label{sect:Condat-Vu-general}

\subsection{Reflexive Banach spaces}

In the sequel, we establish ergodic convergence rates for the Condat-V\~u algorithm with general data fidelity functions in the presence of noisy data, where both $\X$ and $\Y$ are reflexive Banach spaces. We also
highlight the main differences from the Hilbert space analysis developed in Section~\ref{sect:particular-instances}.

We start by noting that inclusion \eqref{OC1-gen-n} adapted to this setting takes the form
\begin{gather}\label{OC1-gen-n-Banach}
    \Phi(\op u\de)\in\partial H_f^*(v\de),
\end{gather}
where $\Phi:\Y\rightarrow \Y^{**}$ denotes the canonical embedding of $\Y$ in its bidual. The main differences from the Hilbert space setting is that the subdifferential of $H_f^*$ is a subset of $\Y^{**}$ and that the dual variables are in $Y^*$.

Fix $p,q>1$ such that $\frac{1}{p}+\frac{1}{q}=1.$ The function $B$  is defined here via two auxiliary functions $\varphi$ and $\psi$ which determine the non-Euclidian geometries used for the primal and dual variables. 

Recall that a function $h:X\rightarrow\mathbb{R}\cup\{+\infty\}$ is supercoercive if 
$\lim\limits_{\|u\|\rightarrow+\infty}\frac{h(u)}{\|u\|}=+\infty$.
For the definitions of essential smoothness and essential strict convexity, see \cite{bauschke2001essential}.

We assume the following:
\begin{enumerate}[start=10,label={(A\arabic*)}]
    \item $\varphi:\X\rightarrow\mathbb{R}\cup\{+\infty\}$ is proper, lower semicontinuous, essentially smooth, essentially strictly convex, such that $\operatorname{dom}\partial R\subset\operatorname{int}(\operatorname{dom} \varphi)$, and at least one of $R$ and $\varphi$ is supercoercive. Moreover, assume that it is $\mu_1$-strongly convex: \begin{equation*}
        \forall u_1, u_2\in\operatorname{dom }\varphi, \quad D_{\varphi}(u_1, u_2) \ge \frac{\mu_1}{p} \|u_1 - u_2\|^p_\X .
    \end{equation*} \label{A8-gen}
    \item $\psi:\Y^*\rightarrow\mathbb{R}\cup\{+\infty\}$ is proper, lower semicontinuous, essentially smooth, essentially strictly convex, such that $\operatorname{dom}\partial \fidelitynoisy^*\subset\operatorname{int}(\operatorname{dom} \psi)$, and at least one of $\fidelitynoisy^*$ and $\psi$ is supercoercive. Moreover, assume that it is $\mu_2$-strongly convex: 
    \begin{equation*}
        \forall v_1, v_2\in\operatorname{dom }\psi,\quad D_{\psi}(v_1, v_2) \ge \frac{\mu_2}{q} \|v_1 - v_2\|^q_{\Y^*}.
    \end{equation*}\label{A9-gen}
\end{enumerate}

With these clarifications, the function $B:\X\times \Y^*\rightarrow\mathbb{R}\cup\{+\infty\}$ is chosen as
\begin{equation}\label{def-B-gen}
B(u,v) = \frac{1}{\tau}\varphi(u) + \frac{1}{\sigma}\psi(v) - \langle v, \op u \rangle - G(u),
\end{equation}
where $\tau, \sigma>0$. Note that $\langle v,\op u\rangle = \langle \Phi(\op u),v\rangle.$

The  Condat-V\~u algorithm can again be derived from the fixed-point iteration
\begin{gather}\label{fpi-gen-Banach}
    \begin{pmatrix}
        0 \\ 0
    \end{pmatrix} \in
    \begin{pmatrix}
        \partial R\left(u^{k+1}\right)+G'\left(u^{k+1}\right)+\op^{*} v^{k+1} \\ \partial H_{f^{\delta}}^{*}\left(v^{k+1}\right)-\Phi(\op  u^{k+1})
    \end{pmatrix}+\nabla B\left(u^{k+1}, v^{k+1}\right)-\nabla B\left(u^{k}, v^{k}\right),
\end{gather}
 and  takes the following form.

\begin{algorithm}[H]
\caption{Generalized Condat-V\~u algorithm}
\label{Condat-Vu-alg}
\begin{algorithmic}[1]
\State \textbf{Input:} Initial values $u^0\in\operatorname{dom }G'\cap\operatorname{dom}\partial R\color{black}, v^0\in\operatorname{dom}\partial \fidelitynoisy^*\color{black}$, step sizes $\tau, \sigma > 0$, data $f^\delta$
\For{$k = 0, 1, 2, \dots$ }
    \State $u^{k+1} \gets \arg \min\limits_{u\in \X} G(u^{k}) + \langle G'(u^{k}), u - u^{k} \rangle +R(u)+\langle \op u, v^{k}\rangle+ \frac{1}{\tau} D_\varphi(u, u^{k}) $
    \State $v^{k+1} \gets \arg \min\limits_{v\in \Y^*} H^*_{f^{\delta}}(v) - \langle v, \op (2u^{k+1} - u^{k}) \rangle + \frac{1}{\sigma} D_\psi(v,v^{k})$
\EndFor
\State \textbf{Return:} $u^{k+1}, v^{k+1}$
\end{algorithmic}
\end{algorithm}
Note that this procedure is an adaptation of the one from \cite{chambolle2016ergodic} for noisy data.

Assumptions \ref{A2} - \ref{A4}, \ref{A8-gen} and \ref{A9-gen} imply that Algorithm \ref{Condat-Vu-alg} is well-defined - for more details see Appendix~A, Fact~A.8 from \cite{darbon2021accelerated}.

The convergence of the algorithm is discussed in \cite{chambolle2016ergodic}. One needs  the assumption  $\tau\sigma\|\op \|^2<1$ and existence of  a saddle point for \eqref{saddle-point-problem}  - the latter being  guaranteed by \ref{source-condition}. The authors remark that both the sequence of iterates $(u^n,v^n)$, as well as the ergodic averages $(\bar{u}^n,\bar{v}^n)$, are bounded. Additionally, any weak cluster point of the ergodic averages is a saddle point of problem \eqref{saddle-point-problem}.

We show below that $B$ is strictly convex, despite the fact that just convexity would suffice in the subsequent analysis.
Note that one does not need  source condition \ref{source-condition} in the next lemma.

\begin{lemma}\label{lem-str-conv}
    Let $\tau, \sigma>0$ be fixed. Let \ref{A4}, \ref{A8-gen}, \ref{A9-gen} hold and assume the inequality $\tau(\sigma^{p-1}\|\op \|^p+\mu_2^{p-1}L)<\mu_1\mu_2^{p-1}.$
    Then $B$ defined by \eqref{def-B-gen} is a strictly convex function.
\end{lemma}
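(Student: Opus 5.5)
We show that $B$ is strictly convex by proving that its derivative is a strictly monotone operator, in line with the hint given in the proof of Theorem \ref{main-theo-part}. Write $w_i=(u_i,v_i)$, $i=1,2$, with $w_1\neq w_2$ in the interior of the domain of $B$. The gradient is
$$\nabla B(u,v)=\Big(\tfrac1\tau\varphi'(u)-\op^*v-G'(u),\ \tfrac1\sigma\psi'(v)-\Phi(\op u)\Big).$$
We compute $\langle \nabla B(w_1)-\nabla B(w_2),w_1-w_2\rangle$, which equals the symmetric Bregman distance $D^s_B(w_1,w_2)$. It splits as
$$\tfrac1\tau D^s_\varphi(u_1,u_2)+\tfrac1\sigma D^s_\psi(v_1,v_2)-D^s_G(u_1,u_2)-2\langle v_1-v_2,\op(u_1-u_2)\rangle .$$
Here the bilinear coupling $-\langle v,\op u\rangle$ contributes $-2\langle v_1-v_2,\op(u_1-u_2)\rangle$ to the symmetric distance.

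\textbf{Step 1: bound each part.} By \ref{A8-gen} and \ref{A9-gen}, summing the one-sided bounds in both orders gives
$$D^s_\varphi\ge \tfrac{2\mu_1}{p}\|u_1-u_2\|^p,\qquad D^s_\psi\ge \tfrac{2\mu_2}{q}\|v_1-v_2\|^q.$$
By \ref{A4}, $D^s_G\le \tfrac{2L}{p}\|u_1-u_2\|^p$. For the coupling we use Young's inequality with exponents $p,q$ and a free parameter $\varepsilon>0$:
$$2|\langle v_1-v_2,\op(u_1-u_2)\rangle|\le 2\|\op\|\,\|u_1-u_2\|\,\|v_1-v_2\|\le \tfrac{2\varepsilon^p\|\op\|^p}{p}\|u_1-u_2\|^p+\tfrac{2}{q\varepsilon^q}\|v_1-v_2\|^q .$$

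\textbf{Step 2: collect coefficients.} This gives
$$D^s_B(w_1,w_2)\ge \tfrac2p\Big(\tfrac{\mu_1}{\tau}-L-\varepsilon^p\|\op\|^p\Big)\|u_1-u_2\|^p+\tfrac2q\Big(\tfrac{\mu_2}{\sigma}-\varepsilon^{-q}\Big)\|v_1-v_2\|^q .$$
We need both brackets to be strictly positive, i.e.
$$\varepsilon^{-q}<\mu_2/\sigma \quad\text{and}\quad \varepsilon^p\|\op\|^p<\mu_1/\tau-L .$$
The first condition means $\varepsilon^{q}>\sigma/\mu_2$. Since $p/q=p-1$, it is equivalent to $\varepsilon^p>(\sigma/\mu_2)^{p-1}$.

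\textbf{Step 3: choose $\varepsilon$ (the main point).} An admissible $\varepsilon$ exists exactly when
$$(\sigma/\mu_2)^{p-1}\|\op\|^p<\mu_1/\tau-L .$$
Multiplying by $\tau\mu_2^{p-1}$, this becomes
$$\tau\big(\sigma^{p-1}\|\op\|^p+\mu_2^{p-1}L\big)<\mu_1\mu_2^{p-1},$$
which is precisely the hypothesis. The main care needed is tracking the exponent relation $q/p$ versus $p-1$ correctly here. In the Hilbert case $p=q=2$, $\mu_i=1$, this reduces to $\tau(\sigma\|\op\|^2+L)<1$, matching Theorem \ref{main-theo-part}.

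\textbf{Step 4: conclude.} With such an $\varepsilon$, the right-hand side of Step 2 is strictly positive whenever $w_1\neq w_2$. Hence $\nabla B$ is strictly monotone. On the convex interior of the domain (where $\varphi$, $\psi$ and $G$ are differentiable by essential smoothness and \ref{A4}), strict monotonicity of the gradient implies strict convexity of $B$. To see this, restrict to a segment $t\mapsto w_2+t(w_1-w_2)$: the derivative of $B$ along the segment is then strictly increasing. Points on the boundary of the domain are handled through the lower semicontinuity of the pieces. Alternatively, the same Young estimate can be applied to the one-sided distance $D_B(w_1,w_2)$ directly, which avoids the boundary issue.
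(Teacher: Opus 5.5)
Your proof is correct and follows the paper's argument essentially verbatim: you show strict monotonicity of $\nabla B$ through the symmetric Bregman decomposition, apply the strong-convexity and $L$-smoothness bounds, and use a weighted Young inequality whose parameter ($\varepsilon^p$ in your notation, $\bar\epsilon$ in the paper's) lies in the interval $\bigl((\sigma/\mu_2)^{p-1}, (\mu_1/\tau - L)/\|\op\|^p\bigr)$, which is nonempty exactly under the stated step-size condition. Your segment argument for passing from strict monotonicity to strict convexity on the interior is a detail the paper omits, and while your boundary remark is only a sketch, the paper does not address that point either.
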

\begin{proof}
  We will show that $\nabla B$ is a strictly monotone operator. Fix $w = (u,v), \tilde{w} = (\tilde{u}, \tilde{v})$ such that $w\neq \tilde{w}$. Then one has
    \begin{gather}
        \langle \nabla B(w)-\nabla B(\tilde{w}),\begin{pmatrix}
                                                       u-\tilde{u}\\v-\tilde{v} 
                                                    \end{pmatrix}\rangle =
        \frac{1}{\tau}D_{\varphi}^s(u,\tilde{u}) + \frac{1}{\sigma}D_{\psi}^s(v,\tilde{v})-D_{G}^s(u,\tilde{u})-2\langle \op (u-\tilde{u}), v-\tilde{v}\rangle. \label{eq}
    \end{gather}
    Since $G$ is $L$-smooth, it follows that
    $
        D_G(u,\tilde{u})\leq\frac{L}{p}\Vert u-\tilde{u}\Vert^p_\X.
    $
    Changing the roles of $u$ and $\tilde{u}$, and summing the inequalities yield
    \begin{equation}\label{ineq3}
        D^s_G(u,\tilde{u})\leq \frac{2L}{p}\Vert u-\tilde{u}\Vert^p_\X.
    \end{equation} 
    From \eqref{eq}, \eqref{ineq3} and the strong convexity of $\varphi$ and $\psi$, one gets
    \begin{gather}
        \langle \nabla B(w)-\nabla B(\tilde{w}),\begin{pmatrix}
                                                       u\\v 
                                                    \end{pmatrix}-
                                                    \begin{pmatrix}
                                                       \tilde{u}\\ \tilde{v} 
                                                    \end{pmatrix}\rangle\geq
        \left(\frac{2\mu_1}{\tau}-2L\right)\frac{\Vert u-\tilde{u}\Vert^p_\X}{p} + \frac{2\mu_2}{\sigma} \frac{\Vert v-\tilde{v}\Vert^q_{\Y^*}}{q} - 2\langle u-\tilde{u}, \op ^*(v-\tilde{v})\rangle. \label{ineq4}
    \end{gather}
    From the inequalities of Cauchy and Young, respectively, we can estimate the last term of the sum,
    \begin{gather*}
        \langle u-\tilde{u}, \op ^*(v-\tilde{v})\rangle\leq \|\op \| \cdot \Vert u-\tilde{u}\Vert_\X \cdot \Vert v-\tilde{v}\Vert_{\Y^*}\leq \bar{\epsilon}\|\op \|^p\frac{\Vert u-\tilde{u}\Vert^p_\X}{p}+\frac{\Vert v-\tilde{v}\Vert^q_{\Y^*}}{q\bar{\epsilon}^{q/p}},
    \end{gather*}
    for some $\bar{\epsilon}>0.$
    Plugging this back in \eqref{ineq4}, it follows
    \begin{equation*}
        \langle \nabla B(w)-\nabla B(\tilde{w}),w-\tilde{w}\rangle> \left(\frac{2\mu_1}{\tau}-2L-2\bar{\epsilon}\|\op \|^p\right) \frac{\Vert u-\tilde{u}\Vert^p_\X}{p} + \left( \frac{2\mu_2}{\sigma}-\frac{2}{\bar{\epsilon}^{q/p}} \right) \frac{\Vert v-\tilde{v}\Vert^q_{\Y^*}}{q}.
    \end{equation*} 
    One can choose $\bar{\epsilon}>0$ such that the terms in both brackets are positive. Indeed, due to our assumptions, one has $\frac{\sigma^{p-1}}{\mu_2^{p-1}} < \frac{1}{\|\op \|^p}\left(\frac{\mu_1}{\tau}-L\right) $, thus one can pick an $\bar{\epsilon}$ such that
$
            \bar{\epsilon} \in \left(\frac{\sigma^{p-1}}{\mu_2^{p-1}} , \frac{1}{\|\op \|^p}\left(\frac{\mu_1}{\tau}-L\right)\right).
$
    Then, one gets
    $
        \langle \nabla B(w)-\nabla B(\tilde{w}),w-\tilde{w}\rangle > 0.
    $
\end{proof}

We state below the main result concerning error estimates for the ergodic averages $
    \overline{u}^n := \frac{1}{n}\sum_{k = 1}^n u^k, \quad \overline{v}^n := \frac{1}{n}\sum_{k = 1}^n v^k
$
 of Algorithm \ref{Condat-Vu-alg}.

\begin{theo}\label{main-th-gen}
    Let  $\tau, \sigma>0$ be fixed. Assume that \ref{A1} - \ref{A10},  \ref{A8-gen} and \ref{A9-gen}  hold, and that $\tau(\sigma^{p-1}\|\op \|^p+\mu_2^{p-1}L)<\mu_1\mu_2^{p-1}.$
    Then there exists $\nu > 0$ and $\bar{C}>0$ such that, for any $n\in\mathbb{N}$, one has
\begin{gather}\label{estim:non-acc-gen}
  D_{R+G}(\overline{u}^n, u^\dagger) + D_{H_{f^{\delta}}^\star}\left(\overline{v}^n, v^\dagger\right)  \leq \frac{1}{n} D_B(w\de,w^{0}) + \frac{\bar{C}^p q^{p/q}}{\nu^{p/q}} \delta^p n^{p-1}+ C \left(\frac{pq}{\nu}D_B(w\de,w^0)\right)^{1/q} \delta .
\end{gather}
\end{theo}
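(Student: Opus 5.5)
The plan is to follow the four steps of the proof of Theorem \ref{main-theo-part}. Three changes are needed: dual pairings replace inner products, the strong convexity of order $p$ and $q$ from \ref{A8-gen}–\ref{A9-gen} replaces the quadratic lower bounds, and Theorem \ref{lemma-Schmidt-gen} replaces Proposition \ref{lemma-Schmidt}.

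\textbf{Step 1 (exact identity).} Insert the source condition \ref{source-condition} and \eqref{OC1-gen-n-Banach} into the fixed-point inclusion \eqref{fpi-gen-Banach}. Then pair with $w^{k+1}-w\de$ and use the three-point identity for $B$, which is convex by Lemma \ref{lem-str-conv}. This gives the analogue of \eqref{main-eq-n}:
\begin{equation*}
D^s_{R+G}(u^{k+1},u\de)+D^s_{H_{f^\delta}^*}(v^{k+1},v\de)=\langle \zeta^\delta-\zeta, v\de-v^{k+1}\rangle-D_B(w\de,w^{k+1})-D_B(w^{k+1},w^k)+D_B(w\de,w^k),
\end{equation*}
where $\zeta^\delta\in\partial H_{f^\delta}^*(v\de)$ and $\zeta\in\partial H_f^*(v\de)$ are the elements of $Y^{**}$ provided by \ref{A10}, read with $\|\cdot\|_{Y^*}$. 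The operators $\partial R$ and $G'$ cancel against the primal row, and the $\Phi(Ku)$ terms cancel through the coupling in $B$, exactly as in the Hilbert case.

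\textbf{Step 2 (recursive inequality for the dual error).} Drop the nonnegative left-hand side and the terms $D_B(w^k,w^{k-1})\ge 0$, then sum over $k$. This yields $D_B(w\de,w^n)\le D_B(w\de,w^0)+C\delta\sum_{k=1}^n\|v\de-v^k\|_{Y^*}$. Next bound $D_B(w\de,w^n)$ from below. Expand it as $\frac1\tau D_\varphi+\frac1\sigma D_\psi-D_G+\langle v\de-v^n,K(u^n-u\de)\rangle$ and apply \ref{A4}, \ref{A8-gen}, \ref{A9-gen}. Estimate the coupling term with Young's inequality using exponents $p,q$ and $\bar\epsilon$ chosen in the interval from the proof of Lemma \ref{lem-str-conv}. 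This gives
\begin{equation*}
D_B(w\de,w^n)\ge \Big(\tfrac{\mu_1}{\tau}-L-\bar\epsilon\|K\|^p\Big)\tfrac{\|u\de-u^n\|^p}{p}+\nu\tfrac{\|v\de-v^n\|^q}{q},\qquad \nu:=\tfrac{\mu_2}{\sigma}-\bar\epsilon^{-q/p}>0 .
\end{equation*}
Consequently $\|v\de-v^n\|^q\le \frac{q}{\nu}D_B(w\de,w^0)+\sum_{k=1}^n\frac{qC\delta}{\nu}\|v\de-v^k\|$.

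\textbf{Step 3 (closing the recursion).} Apply Theorem \ref{lemma-Schmidt-gen} with $a_n=\|v\de-v^n\|$, $S_n=\frac q\nu D_B(w\de,w^0)$ (constant) and $\lambda_k=\frac{qC\delta}{\nu}$. This gives $a_n\le\big(\frac{q^2}{\nu(q-1)}D_B(w\de,w^0)+(\frac{qC\delta n}{\nu})^p\big)^{1/q}$. Since $q/(q-1)=p$, subadditivity of $t\mapsto t^{1/q}$ then yields
\begin{equation*}
a_n\le \Big(\tfrac{pq}{\nu}D_B(w\de,w^0)\Big)^{1/q}+\Big(\tfrac{qC}{\nu}\Big)^{p/q}\delta^{p/q}n^{p/q}.
\end{equation*}

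\textbf{Step 4 (ergodic bound).} Insert this bound into the Step 1 identity, sum for $k=0,\dots,n-1$ (the $D_B$ terms telescope), and use the convexity of the one-sided Bregman distances, dominated by the symmetric ones, to pass to $\bar u^n,\bar v^n$. After dividing by $n$, the term $C\delta\big(\frac{pq}{\nu}D_B(w\de,w^0)\big)^{1/q}$ appears directly. The remaining term is $\frac{1}{n}C\delta\big(\frac{qC}{\nu}\big)^{p/q}\delta^{p/q}\sum_{k\le n}k^{p/q}$. Since $1+p/q=p$, this is bounded by $\bar C^p\frac{q^{p/q}}{\nu^{p/q}}\delta^p n^{p-1}$, using $\sum_{k\le n}k^{p/q}\le n^{p}$ and absorbing constants into $\bar C$ (for instance $\bar C=C$ up to a factor).

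The main obstacle is Step 2. One must check carefully that, in the non-Hilbert geometry, the cross term cancellation in Step 1 and the Young splitting with mixed exponents $p,q$ leave a strictly positive coefficient $\nu$ in front of $\|v\de-v^n\|^q$. The step-size condition of the theorem is exactly what is needed for this. Well-definedness of $\partial H^*_{f^\delta}$ elements in $Y^{**}$ is taken as given by the assumptions.
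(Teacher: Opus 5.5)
Your proposal is correct and follows essentially the same route as the paper. The shared steps are: the identity obtained by pairing the fixed-point inclusion with $w^{k+1}-w^\dagger$; the mixed-exponent Young splitting that produces $\nu=\frac{\mu_2}{\sigma}-\bar\epsilon^{-q/p}>0$ under the step-size condition; Theorem~\ref{lemma-Schmidt-gen} with the same choices of $a_n$, $S_n$, $\lambda_n$; and summation plus Jensen on the one-sided Bregman distances.

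There are two minor differences. You bound $\sum_{k\le n}k^{p-1}\le n^{p}$ directly instead of citing the generalized Faulhaber formula, which gives the estimate with $\bar C=C$ explicitly. Also, in Step~1 the $\partial R$ and $G'$ terms do not cancel; they produce $D^s_{R+G}$, while the $K^*$ and $\Phi(K\cdot)$ terms cancel each other in the pairing.
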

\begin{proof}
Fix $n\geq1$ and $k$ with $0\leq k\leq n-1$. We follow the same steps as in the proof of Theorem \ref{main-theo-part} and only point out the main differences that appear in this setting. Equation \eqref{fixed-point-iteration-part} becomes:
\begin{gather}\label{fixed-point-Banach}
    \begin{pmatrix}
        0 \\ 0
    \end{pmatrix} \in
    \begin{pmatrix}
        \partial R\left(u^{k+1}\right)-\partial R\left(u\de\right)+G'\left(u^{k+1}\right)-G'\left(u\de\right)+\op^{*} (v^{k+1}-v\de) \\ \partial H_{f^{\delta}}^{*}\left(v^{k+1}\right)-\partial H_f^{*}\left(v\de\right)-\Phi(\op u^{k+1})+\Phi(\op u\de)
    \end{pmatrix}+\nabla B\left(w^{k+1}\right)-\nabla B\left(w^{k}\right).
\end{gather}

We take the dual product with $w^{k+1} - w^\dagger$, employ the three-point identity for $B$ and get the same inequality as in \eqref{ineq-Bregman-n},
\begin{equation} \label{ineq-Bregman-n0}
    \sum\limits_{k=1}^n{\langle  \partial H_{f^{\delta}}^{*}\left(v\de\right)-\partial H_f^{*}\left(v\de\right),v\de-v^k\rangle} +D_B(w\de,w^0)\geq D_B(w\de,w^n).
\end{equation}
The right-hand side can be rewritten as follows,
\begin{gather*}
    D_B(w\de,w^n)=
    \frac{1}{\tau} D_{\varphi}(u\de, u^n) + \frac{1}{\sigma} D_{\psi}(v\de, v^n) - D_G(u\de, u^n) + \langle \op (u\de - u^n), v^n - v\de \rangle.
\end{gather*}
    Following the same steps as in the proof of Lemma \ref{lem-str-conv}, one gets
    \begin{gather}\label{inq}
        D_B(w\de,w^n)\geq \left(\frac{\mu_1}{\tau}-L - \bar{\epsilon}\Vert \op \Vert^p\right)\frac{\Vert u\de-u^n\Vert^p_\X}{p} + \left( \frac{\mu_2}{\sigma}-\frac{1}{\bar{\epsilon}^{q/p}}\right)\frac{\Vert v\de-v^n\Vert^q_{\Y^*}}{q}.
    \end{gather}
     Denote by $\nu = \frac{\mu_2}{\sigma} - \frac{1}{\bar{\epsilon}^{q/p}}$.
    Both coefficients (in brackets) of the right-hand side of \eqref{inq-appendix} are positive, which implies 
    $
            D_B(w\de,w^n)\geq\nu\frac{\Vert v\de-v^n\Vert^q_{\Y^*}}{q}.
    $

    This, together with \eqref{ineq-Bregman-n0}, gives 
    $
        \Vert v\de-v^n\Vert^q_{\Y^*} \leq \frac{q}{\nu}\sum\limits_{k=1}^n{\langle  \partial H_{f^{\delta}}^{*}\left(v\de\right)-\partial H_f^{*}\left(v\de\right),v\de-v^k\rangle} +\frac{q}{\nu} D_B(w\de,w^0).
    $
    Consequently, by employing \eqref{hypo-noisy}, one gets
    \begin{equation}\label{estim-dual-CV-appendix}
        \Vert v\de-v^n\Vert^q_{\Y^*} \leq \frac{qC\delta}{\nu}\sum\limits_{k=1}^n{\Vert v\de-v^k\Vert_{\Y^*}} +\frac{q}{\nu} D_B(w\de,w^0).
    \end{equation}

Now the idea is to use Theorem \ref{lemma-Schmidt-gen}.
For this, we will make the choices: $a_n := \Vert v\de- v^n\Vert_{\Y^*}\geq0,$ $S_n := \frac{q}{\nu}D_B(w\de,w^0) \text{ non-decreasing}, $ $\lambda_n := \frac{qC{\delta}}{\nu} \geq 0,$
for any $n\geq 1$.
Then, for every $k$, 
\begin{gather} 
    \Vert v\de- v^{k+1}\Vert_{\Y^*}\leq  \left(\frac{pq}{\nu}D_B(w\de,w^0)\right)^{1/q} + \left( \left(\frac{(k+1)qC{\delta}}{\nu}\right)^p\right)^{1/q}, \label{estim-dualvar0}
\end{gather}
where we have also employed the property ${(a+b)}^{1/q}\leq{a}^{1/q}+{b}^{1/q},$ for $a, b$ non-negative and $q>1$.

 Using inequalities \eqref{hypo-noisy} and \eqref{estim-dualvar0} in \eqref{main-eq-n}, one finds
\begin{gather}
    D_R^{s}(u^{k+1}, u^\dagger)+D_G^{s}(u^{k+1}, u^\dagger) + D_{H_{f^{\delta}}^\star}^{s}(v^{k+1}, v^\dagger) \leq \nonumber \\ C \delta \left(\left(\frac{pq}{\nu}D_B(w\de,w^0)\right)^{1/q} + \left( \frac{(k+1)qC{\delta}}{\nu}\right)^{p/q}\right) - D_B(w\de,w^{k+1})-D_B(w^{k+1},w^{k})+D_B(w\de,w^{k}). 
\end{gather}
Summing up from $0$ to $n-1$ and using the sum rule for the subdifferential implies
\begin{gather*}
    \sum\limits_{k=1}^{n}{\left(D_{R+G}^{s}(u^k, u^\dagger) + D_{H_{f^{\delta}}^\star}^{s}(v^k, v^\dagger)\right)} \leq
    D_B(w\de,w^{0}) +  \frac{C^p\delta^p q^{p/q}}{\nu^{p/q}}\sum_{k=1}^n { k^{p/q}} + C\delta n \left(\frac{pq}{\nu}D_B(w\de,w^0)\right)^{1/q} .
\end{gather*}
It remains to estimate $\sum_{k=1}^n { k^{p/q}}$, which is the same as $\sum_{k=1}^n { k^{p-1}}$. According to the generalized Faulhaber formula (Corollary 15, \cite{schumacher2022generalization}), this term is of order $O(n^p)$.
With this, one finds
\begin{gather*}
 D_{R+G}(\overline{u}^n, u^\dagger) + D_{H_{f^{\delta}}^\star}\left(\overline{v}^n, v^\dagger\right)  \leq \frac{1}{n} D_B(w\de,w^{0}) + \frac{\bar{C}^p q^{p/q}}{\nu^{p/q}} \delta^p n^{p-1}+ C \left(\frac{pq}{\nu}D_B(w\de,w^0)\right)^{1/q} \delta,
\end{gather*}
for $\overline{u}^n = (\sum_{k = 1}^n u^k)/n$ and $\overline{v}^n = (\sum_{k = 1}^n v^k)/n$. 
\end{proof}

\subsection{Non-reflexive Banach spaces: Entropic PDHG} \label{ex:entropic_mirror_example} 
    In what follows, we focus on the recovery of a nonnegative solution $\truesol$ of $Ku=f$, that belongs to the probability simplex $\Delta = \{ u \in L^1(\Omega) : \int\limits_{\Omega} u \; dt = 1, u\geq0 \text{ a.e.}\}$.  Here we consider  $\op :L^1(\Omega)\rightarrow L^2(\Gamma)$, where  $\Omega, \Gamma\subset \mathbb{R}^2$ are two measurable sets. We choose the function $\varphi$ to be the negative Boltzmann-Shannon entropy,
\begin{gather*}     
    \varphi(u) = \begin{cases}
                    \int\limits_{\Omega} u\operatorname{ln} u \; dt, &\text{ if } u\geq0 \text{ a.e. and } u\operatorname{ln} u  \in L^1(\Omega),\\
                    +\infty, &\text{ otherwise,}
                \end{cases}
    \end{gather*} 
    and the function $B$ from \eqref{def-B-gen} to have the form 
    \begin{equation}
        B(u,v) = \frac{1}{\tau}\varphi(u) + \frac{1}{2\sigma}\|v\|^2_2 - \langle v, \op u \rangle.
    \end{equation}
    The natural space for $\varphi$ is $L^1(\Omega)$, whose positive cone has  empty interior. Therefore, the derivative of $\varphi$ (and consequently, that of  $B$) used in the previous proofs is no longer available. However, we adapt the arguments to this setting by employing directional derivatives of $\varphi$.

\subsubsection{Well-definedness}
    \begin{prop}\label{prop:entropic_mirror_example} 
    Assume that the penalty $R$ is the indicator function over the probability simplex, $G=0,$ and $\psi(v) = \frac{1}{2}\|v\|_2^2$. 
    Then Algorithm \ref{Condat-Vu-alg} is well-defined.
\end{prop}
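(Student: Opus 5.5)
Well-definedness means that for every $k$ each of the two argmin problems in Algorithm \ref{Condat-Vu-alg} has a unique solution, and that the new iterate again lies in the region where the next step makes sense. In particular $u^{k+1}$ must be strictly positive a.e. (so $\operatorname{ln} u^{k+1}$ and $D_\varphi(\cdot,u^{k+1})$ are meaningful) and have finite entropy. I will argue by induction on $k$, assuming $u^k\in\Delta$ with $u^k>0$ a.e. and $\operatorname{ln} u^k\in L^\infty(\Omega)$, and $v^k\in L^2(\Gamma)$. The initialization can be chosen this way, e.g. $u^0$ constant on $\Omega$ if $|\Omega|<\infty$, which I will assume.

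\textbf{Primal step.} With $G=0$ and $R=i_\Delta$, the primal subproblem is
\[
\min_{u\in\Delta}\ \langle K^*v^k,u\rangle+\frac{1}{\tau}D_\varphi(u,u^k),
\]
where $K^*v^k\in L^\infty(\Omega)$ because $K:L^1\to L^2$ is bounded, so $K^*:L^2\to L^\infty$. Since $D_\varphi$ is not Fréchet differentiable on $L^1$, I will avoid optimality conditions and use a direct Gibbs variational argument. Set
\[
\hat u = \frac{u^k e^{-\tau K^*v^k}}{\int_\Omega u^k e^{-\tau K^*v^k}\,dt}.
\]
For every $u\in\Delta$ the objective equals $\frac{1}{\tau}\bigl(\mathrm{KL}(u\,\|\,\hat u)-\operatorname{ln} Z\bigr)$, where $Z$ is the normalizing integral. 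Gibbs' inequality then gives existence and uniqueness of the minimizer $u^{k+1}=\hat u$, since $\mathrm{KL}(u\|\hat u)\ge0$ with equality iff $u=\hat u$ a.e. The computation only needs $\int u\operatorname{ln}(u/\hat u)$ to be well defined in $(-\infty,+\infty]$, which holds because $\operatorname{ln}\hat u\in L^\infty$. The same bound shows the induction hypothesis propagates: $\operatorname{ln} u^{k+1}=\operatorname{ln} u^k-\tau K^*v^k-\operatorname{ln} Z\in L^\infty$ and $u^{k+1}>0$.

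\textbf{Dual step.} Here $\psi=\frac12\|\cdot\|_2^2$ on the Hilbert space $L^2(\Gamma)$, so the dual step is exactly $v^{k+1}=\operatorname{prox}_{\sigma H_{f^\delta}^*}\bigl(v^k+\sigma K(2u^{k+1}-u^k)\bigr)$. This is well defined and unique because $H_{f^\delta}^*$ is proper, convex and lsc by \ref{A2}, and $K(2u^{k+1}-u^k)\in L^2$. For the concrete fidelities of Section \ref{sect:concrete-frameworks} it is moreover explicit.

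\textbf{Main obstacle.} The hard part is justifying the primal step without the differentiability used in the reflexive case, i.e. handling $\int u\operatorname{ln} u$ for $u\in\Delta$ with possibly infinite entropy and checking the KL identity rigorously. The $L^\infty$ bound on $\operatorname{ln}\hat u$ is what makes this rigorous, since it reduces the identity to linearity of the integral. If $|\Omega|=\infty$ were allowed, the initialization would need $u^0>0$ with $\operatorname{ln}u^0$ only locally controlled. In that case I would instead require $\operatorname{ln} u^0\in L^1_{loc}$ and argue with $\mathrm{KL}$ relative to $u^k$ directly.
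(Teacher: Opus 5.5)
Your proposal is correct and follows essentially the paper's route. The primal step is identified with the normalized Gibbs density $u^k e^{-\tau K^*v^k}/Z$, which again lies in $\operatorname{dom}\partial\varphi$ (your condition $\ln u^{k+1}\in L^\infty$), and the dual step is a standard proximal problem in $L^2(\Gamma)$. The differences are minor: you prove the Gibbs characterization directly through the identity $\tau\cdot(\text{objective}) = \mathrm{KL}(u\,\|\,\hat u)-\ln Z$ instead of citing Burger et al.; you obtain existence and uniqueness of the dual step from the Hilbert-space prox instead of the paper's weak$^*$-compactness argument; and you make explicit the finite-measure assumption on $\Omega$ that the paper's requirement $u^0\in\operatorname{dom}\partial\varphi\cap\Delta$ tacitly needs.
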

\begin{proof}
    The minimization problems defining Algorithm \ref{Condat-Vu-alg} can be expressed as follows, 
   \begin{gather}\label{minimization-pbs-KL}
    \begin{cases}
        u^{k+1} = \arg \min\limits_{u\in L^1} i_{\Delta}(u) + \frac{1}{\tau}KL(u, u^k) + \langle Ku, v^k\rangle\\
       v^{k+1} = \arg \min\limits_{v\in L^2} H^*_{f^{\delta}}(v) + \frac{1}{\sigma} \frac{\|v-v^k\|_2^2}{2} + \langle v, K(2u^{k+1}-u^k)\rangle.
    \end{cases}
    \end{gather}
    Notice that $H_{f^\delta}^*$ is convex and weakly$^*$-lower semicontinuous, being a Fenchel conjugate. Together with the quadratic and linear terms, this implies that the objective functional of the second problem is weakly$^*$-lower semicontinuous and coercive. Hence, the corresponding sublevel sets are bounded and weakly$^*$-closed, and are therefore weakly$^*$-compact, due to Banach-Alaoglu theorem.


According to \cite[Proposition 2.3]{burger2020entropic}, the first problem of \eqref{minimization-pbs-KL} has a unique solution in $\operatorname{dom}(\varphi+i_{\Delta})$ given by $u = \frac{1}{\int_{\Omega}u^ke^{-\tau K^*v^k}\; dt}u^ke^{-\tau K^*v^k}$, which satisfies $u\in \operatorname{dom}\partial\varphi$. 
Specifically,  the updates of the algorithm become
\begin{gather*}
\begin{cases}
     u^{k+1} = \frac{u^k\exp(-\tau \op^*v^k)}{\int_{\Omega} u^k\exp(-\tau \op ^*v^k)\; dt} \in\operatorname{dom}\partial\varphi,\\
    v^{k+1} = \text{prox}_{\sigma H_{f^{\delta}}^*} \left(   v^{k} + \sigma \op  (2u^{k+1} - u^{k}) \right), 
\end{cases}
\text{ where } u^0\in\operatorname{dom }\partial\varphi.
\end{gather*}
Then, from \cite[Lemma 2.2 (iv)]{burger2020entropic}, one also finds $\partial KL(\cdot, u^k)(u^{k+1}) = \{\operatorname{ln}u^{k+1} - \operatorname{ln}u^{k}\}$.
\end{proof}


\subsubsection{Convexity of the function $B$ and error estimates}

\begin{prop}
    Suppose that the assumptions of Proposition \ref{prop:entropic_mirror_example} hold and $\tau\sigma\|K\|^2<1$. Then, the function $B$ is convex. 
    If, additionally,  source condition \ref{source-condition} is satisfied, then there exist $\nu>0$ and $\bar{C}>0$ such that for any $n\in\mathbb{N},$ one has
    \begin{equation}\label{ineq:conclu-KL-th}
        D_R(\bar{u}^n, u\de) + D_{H_{\noisydata}^*} (\bar{v}^n, v\de) \leq \frac{1}{n} D_B(w\de, w^0) + \frac{2\bar{C}^2}{\nu}\delta^2n+ 2C\sqrt{\frac{D_B(w\de, w^0)}{\nu}}.
    \end{equation}
\end{prop}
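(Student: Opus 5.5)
For convexity of $B$ I would avoid differentiating $\varphi$ on $L^1$ and instead argue directly with directional derivatives, or by a lower bound on $B$ along segments. The key ingredient is Pinsker's inequality on the simplex: for $u,\tilde u\in\Delta\cap\operatorname{dom}\varphi$, $D_\varphi(u,\tilde u)=KL(u,\tilde u)\ge \frac12\|u-\tilde u\|_1^2$. So on $\operatorname{dom}R$ the function $\varphi$ plays the role of a $1$-strongly convex function with $p=q=2$, $\mu_1=\mu_2=1$, $L=0$. This matches Lemma \ref{lem-str-conv} with the condition $\tau\sigma\|\op\|^2<1$.

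Concretely, I would show that $t\mapsto B(\tilde w+t(w-\tilde w))$ is convex on $[0,1]$. Its second-order behaviour is $\frac1\tau$ times the entropy along the segment plus $\frac1{2\sigma}\|v-\tilde v\|^2t^2$ minus $t^2\langle v-\tilde v,\op(u-\tilde u)\rangle$. Equivalently, I would prove that the symmetric quantity $\frac1\tau D^s_\varphi(u,\tilde u)+\frac1\sigma\|v-\tilde v\|^2-2\langle \op(u-\tilde u),v-\tilde v\rangle$ is nonnegative, with $D^s_\varphi$ expressed through one-sided directional derivatives $\varphi'(u;\tilde u-u)$. Pinsker's inequality and Young's inequality with $\bar\epsilon\in(\sigma,1/(\tau\|\op\|^2))$ then give nonnegativity exactly as in Lemma \ref{lem-str-conv}.

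For the error estimate I would follow the proof of Theorem \ref{main-theo-part} step by step. By Proposition \ref{prop:entropic_mirror_example}, the iterates lie in $\operatorname{dom}\partial\varphi$ and the optimality conditions hold with the explicit subgradient $\ln u^{k+1}-\ln u^k$ of $KL(\cdot,u^k)$. I would write these optimality inequalities as subgradient inequalities for $R$ and $H^*_{\noisydata}$ evaluated at the test point $w\de$, rather than as an inclusion. Adding them and using the three-point identity for $KL$, which is valid because all logarithms are evaluated at iterates in $\operatorname{dom}\partial\varphi$, should give the analogue of \eqref{main-eq-n}. In that analogue $D_B$ is defined via these directional derivatives, with $u\de\in\Delta$, and the equality is relaxed to an inequality.

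The main obstacle is justifying the three-point identity and the pairing $\langle \ln u^{k+1},u\de-u^{k+1}\rangle$ when $u\de$ may not be in $\operatorname{dom}\partial\varphi$. I would need $\varphi(u\de)<\infty$, which holds since $D_B(w\de,w^0)$ must be finite. I would then use the directional-derivative inequality $\varphi(u\de)-\varphi(u^{k+1})\ge\varphi'(u^{k+1};u\de-u^{k+1})$, with the derivative computed as $\int(\ln u^{k+1}+1)(u\de-u^{k+1})$, which is finite because $\ln u^{k+1}-\ln u^k=-\tau\op^*v^k+\text{const}$ is essentially bounded.

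Once the analogue of \eqref{ineq-Bregman-n} holds, the Pinsker–Young bound gives $D_B(w\de,w^n)\ge\frac\nu2\|v\de-v^n\|^2$. Using \ref{A10} with constant $C$ (here $C=2$ for Morozov) and Proposition \ref{lemma-Schmidt}, I would obtain $\|v\de-v^k\|\le \frac{2Ck\delta}{\nu}+\sqrt{2D_B(w\de,w^0)/\nu}$. Substituting this bound back, summing over $k$, and applying Jensen to the one-sided Bregman distances then yields \eqref{ineq:conclu-KL-th} with suitable $\bar C$.
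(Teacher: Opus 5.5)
Your proposal is correct and follows essentially the paper's route. Convexity of $B$ comes from Pinsker's inequality on $\Delta$ together with $\tau\sigma\|\op\|^2<1$; the paper completes the square in $v$ and proves a chord inequality for $F=\frac{1}{\tau}\varphi-\frac{\sigma}{2}\|\op\,\cdot\,\|_2^2$ on $\Delta$, which amounts to your Young step with $\bar\epsilon=\sigma$ but avoids passing through monotone one-sided derivatives. The estimate is then a rerun of the Hilbert-space argument with $D_B$ built from directional derivatives, and your observation that $\ln u^k\in L^\infty$ for every $k$ (since $\op^*v^k\in L^\infty$ and $u^0\in\operatorname{dom}\partial\varphi$), so that only $\varphi(u\de)<\infty$ is needed for the three-point identity, is exactly the detail the paper omits; using Proposition~\ref{lemma-Schmidt} gives $\frac{C^2}{\nu}\delta^2(n+1)+C\delta\sqrt{2D_B(w\de,w^0)/\nu}$, which implies the stated bound with $\bar C=C$ once its last term is read with the factor $\delta$ it should carry (compare Theorem~\ref{main-th-gen} with $p=q=2$).
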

\begin{proof}
We begin by proving the convexity of $B$ under the assumption $\tau\sigma\|\op \|^2<1$.
Note that one can rewrite 
$
    \frac{1}{2\sigma}\|v\|_2^2 - \langle v, \op u \rangle = \left\| \frac{1}{\sqrt{2\sigma}}v - \frac{\sqrt{2\sigma}}{2}\op u \right\|_2^2 - \frac{\sigma}{2}\|\op u\|^2_2,
$
hence 
\begin{equation*}
B(u,v) = \left\| \frac{1}{\sqrt{2\sigma}}v - \frac{\sqrt{2\sigma}}{2}\op u \right\|_2^2 + \underbrace{\frac{1}{\tau}\varphi(u)  - \frac{\sigma}{2}\|\op u\|^2_2}_{F(u)} .
\end{equation*}
The first term is jointly convex as the squared norm of an affine mapping; thus, it suffices to prove the convexity of $F$.

For this, consider $u_0,u_1\in \Delta$ and define $u_t = (1-t)u_0 + tu_1, h = u_1-u_0.$
One can easily check the following identity,
\begin{equation*}
    (1-t)\varphi(u_0) +  t\varphi(u_1) - \varphi (u_t) = (1-t) D_{\varphi}(u_0, u_t) + t D_{\varphi}(u_1, u_t),
\end{equation*}
which, together with Pinsker's inequality \cite[Lemma 2.1 (v)]{burger2020entropic}, $\forall x,y\in \Delta, D_{\varphi}(x,y)\geq \frac{1}{2}\|x-y\|_1^2$, implies
\begin{equation}\label{ineq-entr}
    \varphi(u_t) \leq (1-t)\varphi(u_0) + t\varphi(u_1) - \frac{t(1-t)}{2}\|h\|_1^2. 
\end{equation}
On the other hand, $\op u_t = (1-t) \op u_0 + t\op u_1 = \op u_0 + t\op h,$ and the following equalities hold
\begin{equation*}
    \|\op u_t\|_2^2 = \|\op u_0\|^2_2 +t^2\| \op h\|_2^2+2t \langle \op u_0, \op h\rangle,
\end{equation*}
\begin{equation*}
    \|\op u_1\|_2^2 = \|\op u_0\|^2_2 +\| \op h\|_2^2+2 \langle \op u_0, \op h\rangle.
\end{equation*}
By substituting the term $2\langle \op u_0, \op h\rangle$ from the second equality into the first one, we find
\begin{equation}\label{id-norm}
\|\op u_t\|_2^2 = (1-t)\|\op u_0\|^2_2 +t\| \op u_1\|_2^2 - t(1-t) \|\op h\|^2_2.
\end{equation}
From \eqref{ineq-entr} and \eqref{id-norm} it follows $F(u_t)\leq   (1-t) F(u_0) + tF(u_1) + t(1-t)\left( \frac{\sigma}{2}\|\op h\|_2^2 - \frac{1}{2\tau} \|h\|_1^2\right).$
Since the operator $\op $ is bounded, the previous inequality becomes 
\begin{equation}\label{ex:KL-proof-ineq}
    F(u_t)\leq   (1-t) F(u_0) + tF(u_1) + t(1-t)\left( \frac{\sigma}{2}\|\op \|^2 - \frac{1}{2\tau} \right)\|h\|_1^2.
\end{equation}
By employing the assumption $\sigma\tau\|\op \|^2<1$, the convexity follows.

As for the error estimates, the results follow similarly to those in Theorem \ref{main-th-gen}, by showing directly that an equality of type \eqref{main-eq-n} holds, namely
\begin{gather*}
D_R(u^{k+1}, u^\dagger) + D_{H_{f^{\delta}}^\star}^{s}(v^{k+1}, v^\dagger) = \\
    \left\langle  \partial H_{f^{\delta}}^{*}\left(v\de\right)-\partial H_f^{*}\left(v\de\right),v\de-v^{k+1}\right\rangle  - D_B(w\de,w^{k+1})-D_B(w^{k+1},w^{k})+D_B(w\de,w^{k}),
\end{gather*}
where $D_B$ is defined by means of the directional derivative. We omit the details because of the similarity.
\end{proof}
\begin{remark}
    Note that $D_R(\bar{u}^n, u\de)$ from \eqref{ineq:conclu-KL-th} reduces in this case to $\langle \op^*v\de, \bar{u}^n - \truesol \rangle$.
\end{remark}

\section{Numerical results} \label{sect:numerics}
This section illustrates the performance of the proposed algorithms when applied to a sparsity problem and a hyperspectral abundance map
recovery.

Before presenting the numerical results, we briefly review several relevant instances of the Condat-V\~u algorithm studied in  \cite{chambolle2011first, chambolle2016ergodic, molinari2024iterative}.


\begin{table}[htbp]
    \centering
    \begin{tabular}{l|c|c|c|c|c|c|c}
         & $\X$ & $\Y$ & $H_{f^{\delta}}$ & $R$ & $G$ & $\varphi$ & $\psi$ \\ 
        \hline 
        \begin{tabular}{@{}l@{}} \scriptsize\textsc{Chambolle,} \\ \scriptsize\textsc{Pock (2011)} \end{tabular} 
        & $\mathcal{\mathbb{R}}^{m\times n }$ & $\X\times \X$ & $\frac{\lambda}{2}\|u-f^{\delta}\|^2_2$ & $\|\nabla u\|_1$ & $0$ & $\frac{1}{2}\|u\|^2$ & $\frac{1}{2}\|v\|^2$ \\ \cline{2-8}
        & $\mathcal{\mathbb{R}}^{m\times n}$ & $\X\times \X$ & $\lambda\|u-f^{\delta}\|_1$ & $\|\nabla u\|_1$ & $0$ & $\frac{1}{2}\|u\|^2$ & $\frac{1}{2}\|v\|^2$ \\ \cline{2-8}
        & $\mathcal{\mathbb{R}}^{m\times n}$ & $\X\times \X$ & $ \frac{\lambda}{2}\|u-f^{\delta}\|^2_2$ & $\|\nabla u\|_{\alpha}$ \scriptsize(Huber) & $0$ & $\frac{1}{2}\|u\|^2$ & $\frac{1}{2}\|v\|^2$ \\ \cline{2-8}
        & $\mathcal{\mathbb{R}}^{m\times n}$ & $\X\times \X$ & $\frac{\lambda}{2}\|\op u-f^{\delta}\|^2_2 $ & $\|\nabla u\|_1$ & $0$ & $\frac{1}{2}\|u\|^2$ & $\frac{1}{2}\|v\|^2$ \\ 
        \hline 
       \scriptsize\textsc{Chambolle,} & $\mathbb{R}^l$ & $\mathbb{R}^k$ & $H(\op u) = \max\limits_i(\op u)_i$ & $i_{\Delta_l}$ & $0$ & $\frac{1}{2}\|u\|^2$ & $\frac{1}{2}\|v\|^2$ \\ \cline{2-8}
       \scriptsize\textsc{Pock (2015)} & $\mathbb{R}^l$ & $\mathbb{R}^k$ & $H(\op u) = \max\limits_i(\op u)_i$ & $i_{\Delta_l}$ & $0$ & $u\log u$ & $v\log v$ \\ \cline{2-8}
       \scriptsize$f^{\delta}=f$ & $\mathbb{R}^l$ & $\mathbb{R}^k$ & $H_f(\op u) = \frac{1}{2}\|\op u-f\|^2$ & $i_{\Delta_l}$ & $0$ & $\frac{1}{2}\|u\|^2$ & $\frac{1}{2}\|v\|^2$ \\ \cline{2-8}
        & $\mathbb{R}^l$ & $\mathbb{R}^k$ & $H_f(\op u) = \frac{1}{2}\|\op u-f\|^2$ & $i_{\Delta_l}$ & $0$ & $u \log u$ & $\frac{1}{2}\|v\|^2$  \\ \cline{2-8}
        & $\mathbb{R}^l$ & $\mathbb{R}^k$ & $H_f(\op u) = \frac{1}{2}\|\op u-f\|^2$ & $\lambda_1\|u\|_1$ & $\lambda_2 \|u\|^2$ & $\frac{1}{2}\|u\|^2$ & $\frac{1}{2}\|v\|^2$ \\
        \hline 
        \begin{tabular}{@{}l@{}} \scriptsize\textsc{Molinari,} \\ \scriptsize\textsc {et al.(2022)} \\ \scriptsize$\|f^\delta-f\|_2\le\delta$ \end{tabular} 
        & $l^2$ & $\mathcal{\Y}$ & $H_{f^{\delta}} = i_{\{f^{\delta}\}}$ & $\|u\|_1$ & $0$ & $\frac{1}{2}\|u\|^2$ & $\frac{1}{2}\|v\|^2$ \\ \cline{2-8}
        & $\mathbb{R}^{d\times d}$ & $\mathbb{R}^{d\times d}$ & $H_{f^{\delta}} = i_{\{f^{\delta}\}}$ & $\|\cdot\|_*$\scriptsize (nuclear)& $0$ & $\frac{1}{2}\|u\|^2$ & $\frac{1}{2}\|v\|^2$ \\
        \hline
    \end{tabular}
    \vspace{0.4cm}
    \caption{Summary of several imaging and optimization settings approached via Condat–V\~u algorithms in the literature}
    \label{tab:pdhg_comparison}
\end{table}
\color{black}

\subsection{The sparsity problem}
As in Section \ref{sect:concrete-frameworks}, we consider  Algorithms \ref{Condat-Vu-al-4} and \ref{Condat-Vu-alg-6} for $R=\|\cdot\|_1$ and $G=0$.
The choice of the forward operator $\op $ is inspired by the examples presented in Section 8 of \cite{molinari2024iterative}. Specifically, let $\op \in\mathbb{R}^{100}\times\mathbb{R}^{100}$ be a matrix with Gaussian entries and Toeplitz correlation structure, with the correlation parameter $\rho = 0.5.$ The exact solution $u^{\dagger}$ is chosen to be sparse, with fifteen randomly selected nonzero entries and scaled to satisfy $\|u\de\|_2=20$. The exact data are generated as $f=\op u^{\dagger}$, and noisy observations are constructed by perturbing the exact data with Gaussian noise with a given noise level $\delta$. Therefore, fixing the exact solution norm eliminates the variations in reconstruction quality caused by changes in signal magnitude.

\subsubsection{The non-accelerated PDHG algorithm}

\noindent \textit{The equality constraint case}

\begin{equation*}
    \inf\limits_{u\in \mathbb{R}^{100}}\{ \|u\|_1\} \text{ such that } \op u=f^{\delta}.
\end{equation*}
We approach this using the PDHG method with step sizes $\tau = \sigma = \frac{0.9}{\|\op \|_2}$, where $\|\op \|_2$ denotes the spectral norm of the matrix $\op $. Figure \ref{fig:eq-different-noise-levels} displays the semi-convergence behavior of PDHG for different noise levels $\delta\in[1,15]$ (see also \cite{molinari2024iterative}). 

\noindent \textit{The inequality constraint case - Morozov regularization}

Next, assume that $H_{f^{\delta}} = i_{\{C^{\delta}\}}$ with $C^{\delta} = \{ z : \|f^{\delta}-z\|_2\leq \delta\}$. The resulting problem is:
\begin{equation}
    \inf\limits_{u\in\mathbb{R}^{100}}\{ \|u\|_1\} \text{ such that } \|\op u-f^{\delta}\|_2\leq\delta,
\end{equation}
for which we employ again the PDHG method with the same step-size parameters. Figure \ref{fig:Morozov-different-noise-levels} displays the convergence behavior of this approach for several $\delta\in[1,15].$ 

\begin{figure}[H]
    \centering
    \begin{subfigure}[t]{0.49\textwidth}
        \centering
    \includegraphics[width=\textwidth, height=0.2\textheight, keepaspectratio=false, trim=20 20 60 70, clip]{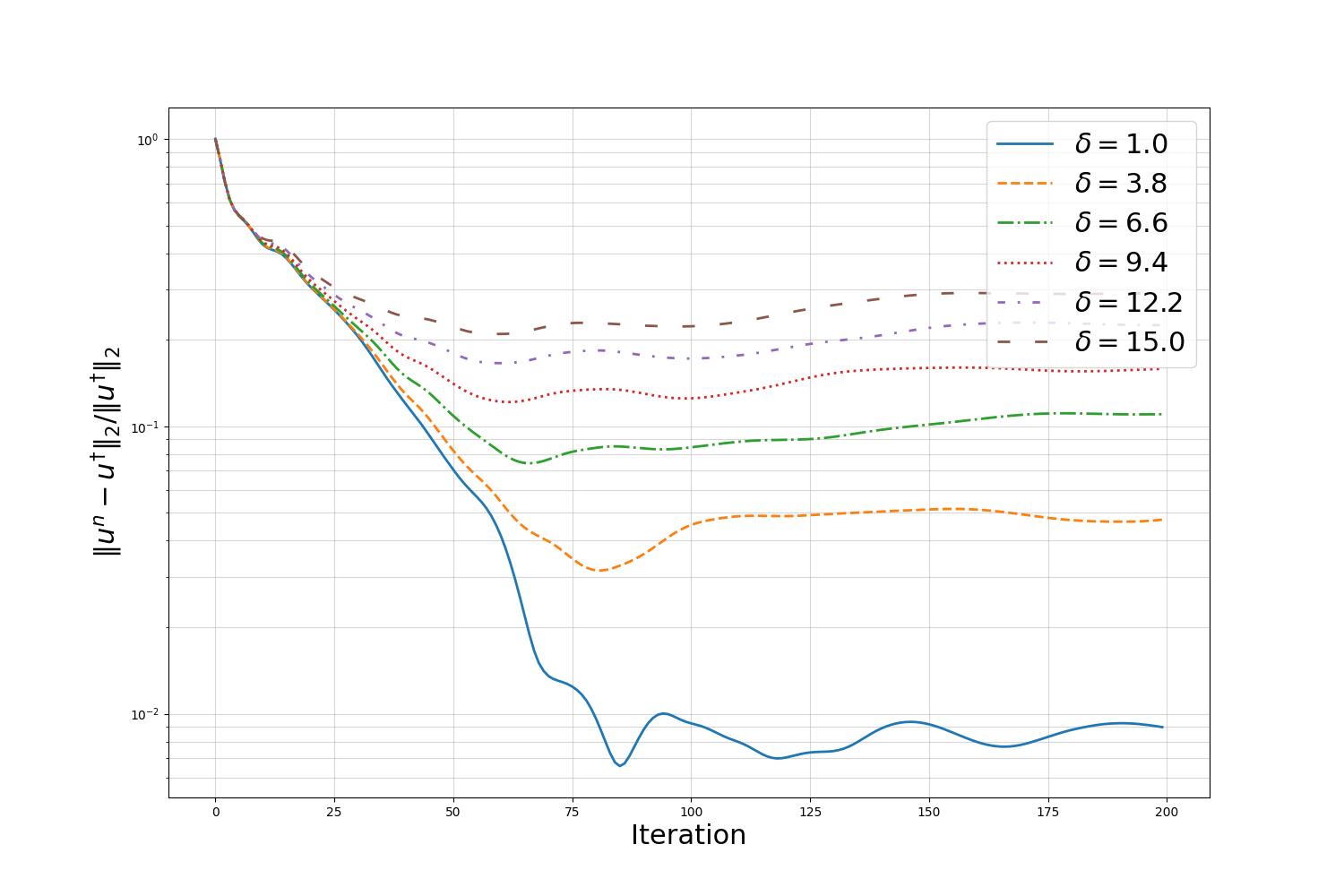}
        \caption{Equality constraint}
        \label{fig:eq-different-noise-levels}
    \end{subfigure}
    \hfill 
    \begin{subfigure}[t]{0.49\textwidth}
        \centering
        \includegraphics[width=\textwidth, height=0.2\textheight, keepaspectratio=false, trim=20 20 60 70, clip]{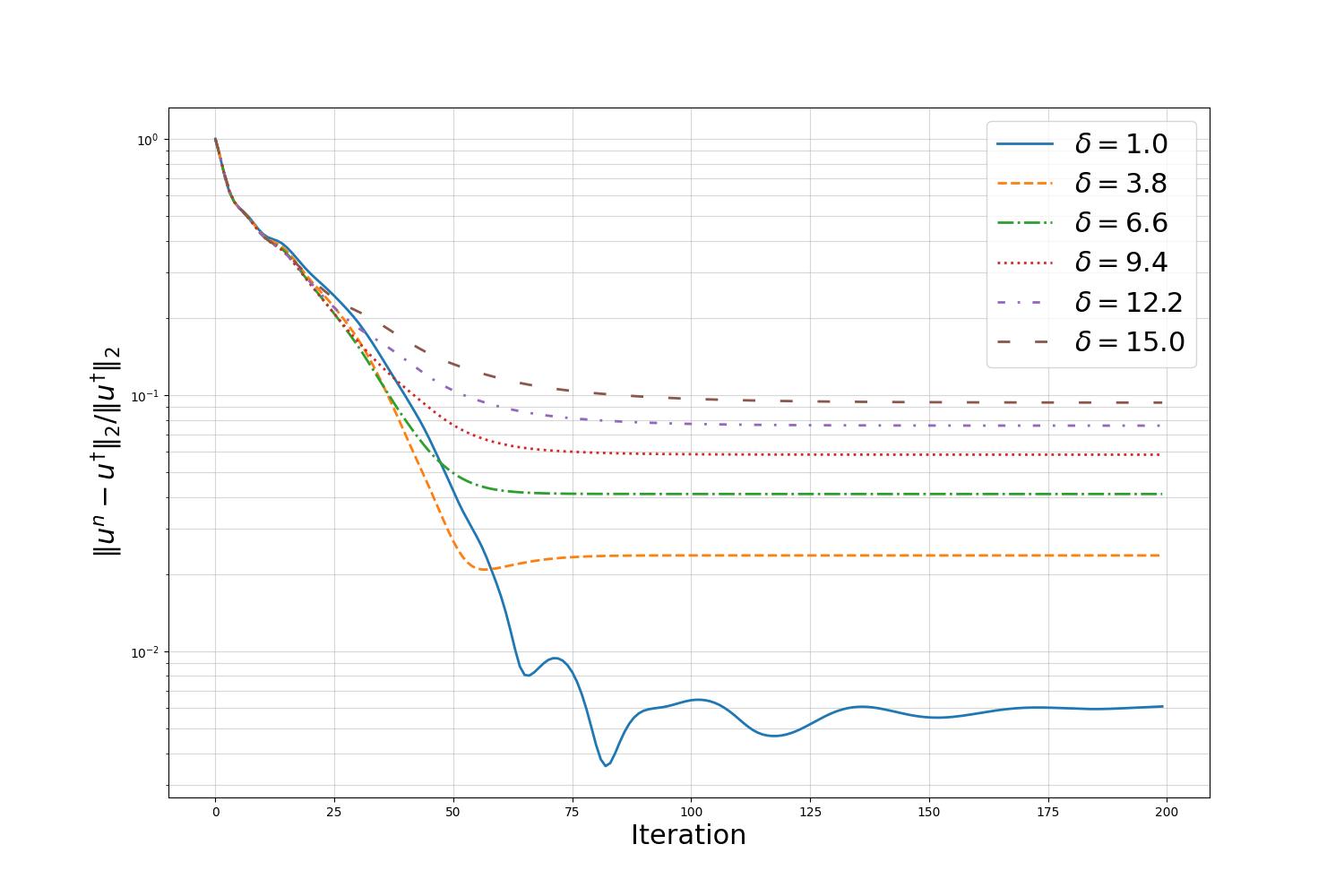}
        \caption{Morozov regularization}
        \label{fig:Morozov-different-noise-levels}
    \end{subfigure}
    \caption{Semi-convergence of the PDHG algorithm for different noise levels.}
    \label{fig:pdhg-semi-convergence}
\end{figure}

\noindent\textit{Equality versus  inequality constraint}

This paragraph compares the performance of the PDHG schemes applied to the equality constrained formulation and to Morozov regularization.
 Table \ref{table:eq-morozov-comparison} highlights for both situations the iteration index $n_{\text{min}}$ at which the smallest reconstruction error $D_R(\bar{u}^n, u\de)$ is achieved, and the corresponding value $D_R(\bar{u}^{n_{\text{min}}}, u\de)$ for different noise levels $\delta\in[1.6, 6]$. Morozov regularization attains lower errors across all tested noise levels.
This confirms that early stopping is the sole regularization mechanism for the equality constrained problem, whereas in the Morozov formulation it provides additional stability to that already induced by the data-fitting inequality.

\begin{table}[H]
\centering
\begin{tabular}{ccccc}
\cline{2-5}
\multicolumn{1}{l|}{}                                  & \multicolumn{2}{c|}{\cellcolor[HTML]{DAD7D7}Equality}                                                              & \multicolumn{2}{c|}{\cellcolor[HTML]{DAD7D7}Morozov}                                                               \\ \hline
\rowcolor[HTML]{EBEBEB} 
\multicolumn{1}{|c|}{\cellcolor[HTML]{DAD7D7}$\delta$} & \multicolumn{1}{c|}{\cellcolor[HTML]{EBEBEB}$n_{\text{min}}$} & \multicolumn{1}{c|}{\cellcolor[HTML]{EBEBEB}Error} & \multicolumn{1}{c|}{\cellcolor[HTML]{EBEBEB}$n_{\text{min}}$} & \multicolumn{1}{c|}{\cellcolor[HTML]{EBEBEB}Error} \\ \hline
1.6                                                    & 695                                                           & 3.3135                                             & 6000                                                           & 0.3445                                             \\
3.1                                                    & 302                                                           & 6.1250                                             & 6000                                                           & 0.4728                                             \\
4.5                                                    & 171                                                           & 10.2414                                             & 6000                                                           & 0.5202                                             \\
6.0                                                    & 203                                                           & 9.3907                                            & 6000                                                           & 0.9549                                            
\end{tabular}
\caption{Comparison of the iteration index at which the smallest reconstruction error is achieved and the corresponding reconstruction error}
\label{table:eq-morozov-comparison}
\end{table}
\color{black}

When comparing the CPU execution time of these two frameworks for different noise levels, one can notice that for small noise levels, the execution times are comparable. For example, when $\delta = 1.6$, the execution times are $0.56$ seconds for the equality constrained case and $0.68$ seconds for the Morozov regularization. 

Since the convergence rate in Theorem \ref{main-theo-part} is formulated in terms of the Bregman distance $D_R$ involving the ergodic iterates, Figure \ref{fig:comparison-ergodic-different-noise-levels} displays the evolution of $D_R(\bar{u}^n, u\de)$ appearing in the statement of this theorem, for different noise levels $\delta$. The dots on each curve represent the points at which the Bregman distance reaches its minimum for each algorithm, given that the algorithm stops after reaching the maximum number of iterations, that is, $6000$.

For completeness, the analogous comparison for the current iterates, $D_R(u^n, u\de)$, is included in Figure \ref{fig:comparison-current-different-noise-levels} of Appendix \ref{appendix:figures}; however, this quantity does not appear in the theoretical result. Figure \ref{fig:comparison-sc-eq} illustrates that the source condition is indeed verified (see Appendix \ref{appendix:figures}).

\begin{figure}[H] 
    \centering
    \includegraphics[width=1.0\textwidth]{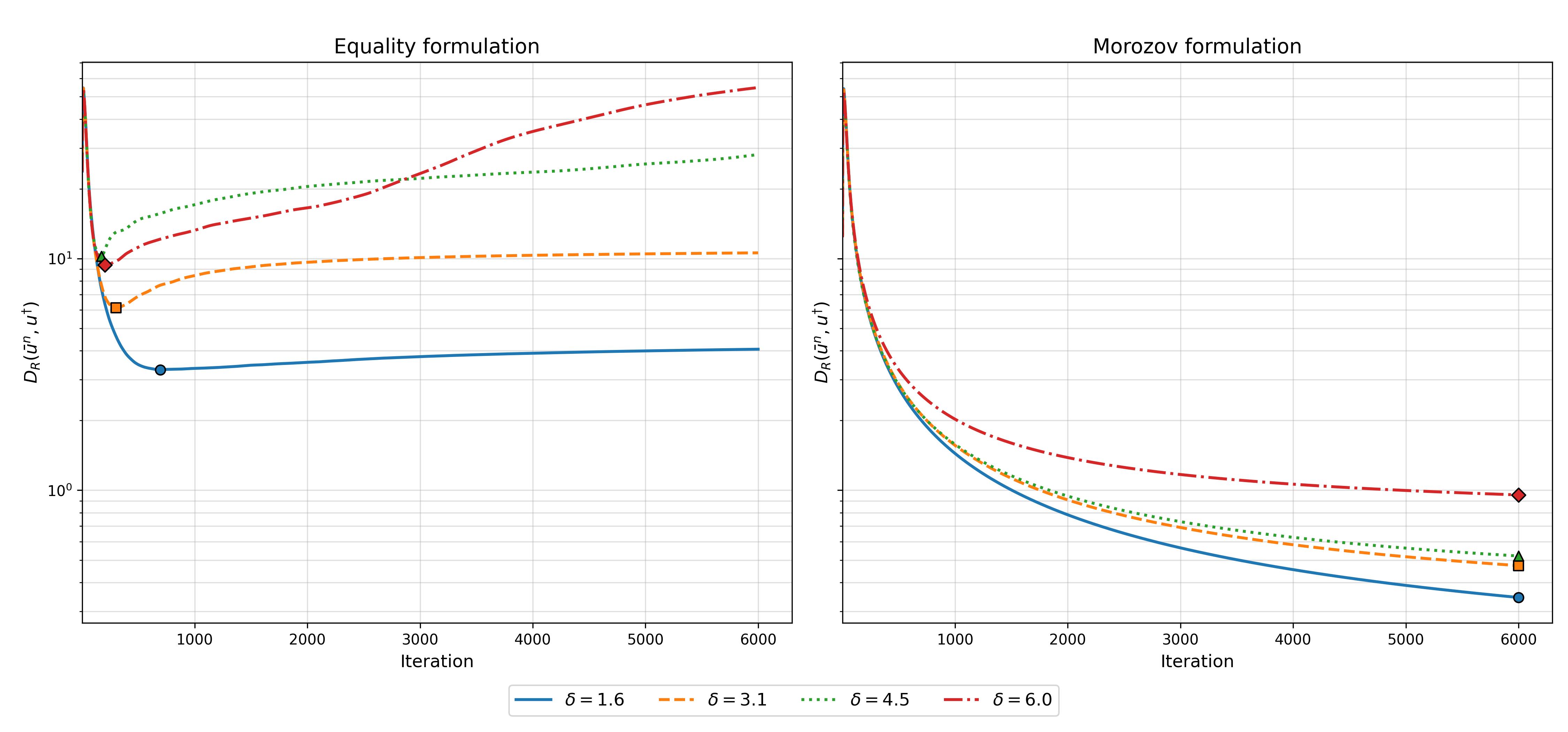}
    \caption{Comparison  of the error estimate for the ergodic sequences at different noise levels}
    \label{fig:comparison-ergodic-different-noise-levels}
\end{figure}

\subsubsection{Comparison of the basic PDHG with the accelerated PDHG method}
This subsection focuses on the problem
\begin{equation}
    \inf\limits_{u\in\mathbb{R}^{100}}\left\{ \|u\|_1 + \frac{\gamma}{2}\|u\|_2^2\right\} \text{ such that } \op u=f^{\delta},
\end{equation}
for $\gamma = 10.0$. The regularizer $R=\|\cdot\|_1 + \frac{\gamma}{2} \|\cdot\|_2^2$ is  strongly convex and promotes sparsity, while the data fidelity is $H_{f^{\delta}}(z)=i_{\{f^{\delta}\}}(z)$ with   $\delta = 0.1$. We compare the standard PDHG with the accelerated version given by Algorithm \ref{acc-alg}. 
Figure \ref{fig:acc-reconstruction} displays the reconstruction and convergence behavior of Algorithm \ref{acc-alg} in this framework.

\begin{figure}[H] 
    \centering
    \includegraphics[width=\textwidth, trim=10 5 10 5, clip]{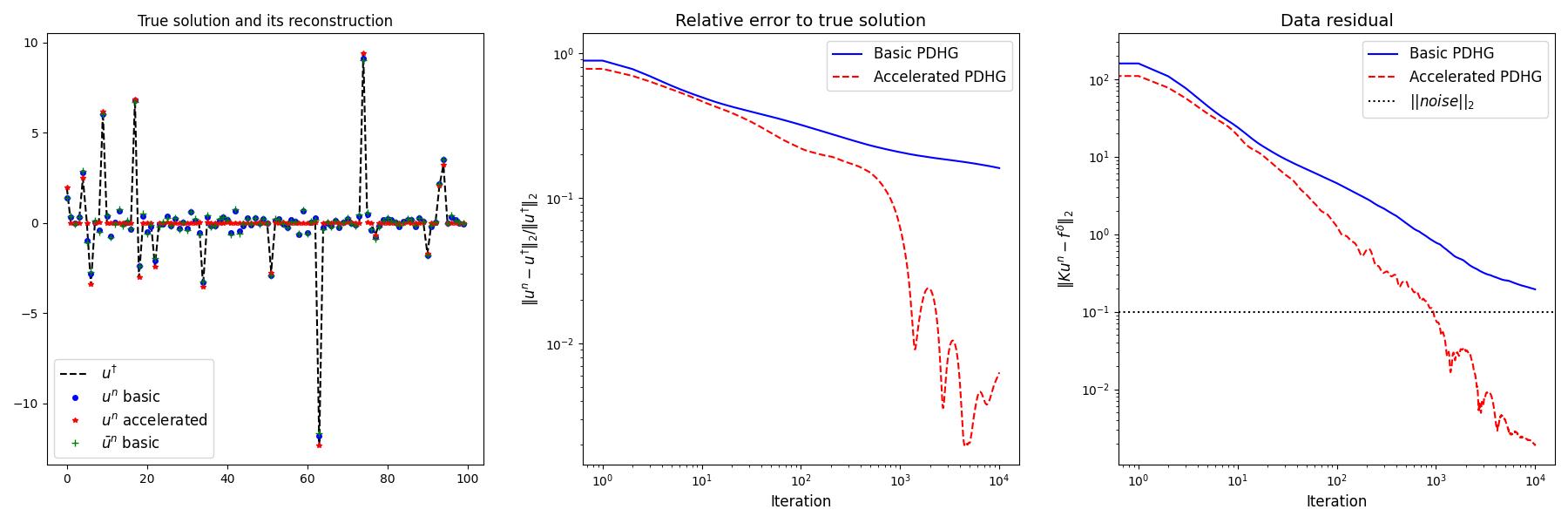}
    \caption{Reconstruction using the accelerated PDHG algorithm with equality constraint}
    \label{fig:acc-reconstruction}
\end{figure}

The plot on the left of Figure \ref{fig:comparison-conclusion-theorems} compares the error $D_R{(\bar{u}^n,u\de)}$ for the standard PDHG algorithm with the theoretical upper bound provided in the statement of Theorem \ref{main-theo-part} and confirms the expected result. The plot on the right of Figure \ref{fig:comparison-conclusion-theorems} compares the squared error for the accelerated PDHG method with the estimate \eqref{estimate-acc-alg} (dotted line) and with the upper bound \eqref{estim-seq-u} (dashed line), validating the statement of Theorem \ref{theo-acc}.

\begin{figure}[H] 
    \centering
    \includegraphics[width=\textwidth, trim=80 10 80 10, clip]{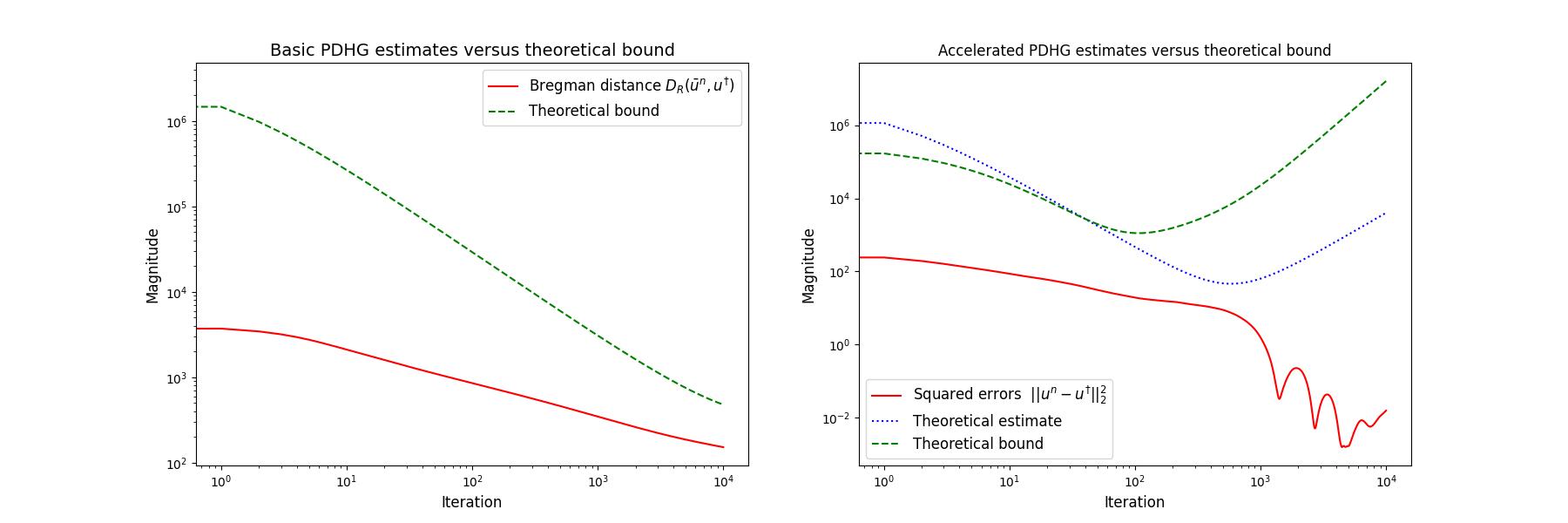}
    \caption{Comparison of the error estimates and theoretical bounds from Theorems \ref{main-theo-part} and \ref{theo-acc}}
    \label{fig:comparison-conclusion-theorems}
\end{figure}

Next, we compare the basic PDHG method with its accelerated version. However, it is not immediately clear which metric is the most appropriate for a fair comparison, since the two schemes are theoretically analyzed using different quantities. For the standard PDHG, the error is expressed in terms of the Bregman distance for the ergodic sequence $\bar{u}^n$, while for acceleration, the theoretical guaranties are formulated in terms of the squared norm for the current iterate $u^n$. 
Because of this mismatch, we present the comparison using both measures.

\begin{figure}[H] 
    \centering
    \includegraphics[width=\textwidth, trim=80 10 80 10, clip]{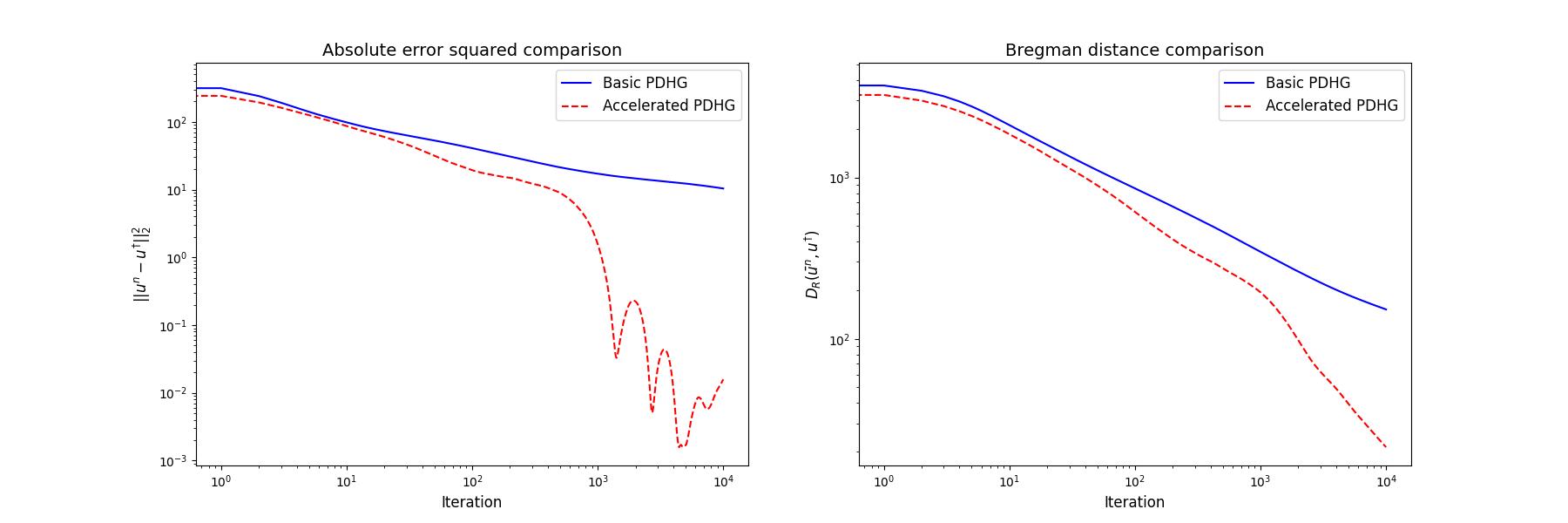}
    \caption{Comparison of basic and accelerated PDHG using two performance metrics}
    \label{fig:comparison_basic_acc}
\end{figure}
One can see in Figure \ref{fig:comparison_basic_acc} that the accelerated version of the algorithm achieves faster convergence than the basic method, showing a sharp drop followed by oscillations in the squared error measure, while maintaining a smooth decay under the Bregman distance.

\subsection{Hyperspectral abundance map recovery}
We consider hyperspectral unmixing - decomposing the mixed spectral signature at each pixel of a hyperspectral image (HSI) into pure material spectra (endmembers) and their fractional abundances - as a practically relevant instantiation of our framework in the Banach space setting of Section~\ref{ex:entropic_mirror_example}. Let $N_b$ be the number of spectral bands, $N_c$ the number of endmember classes, and $N_h \times N_w$ the spatial dimensions of the HSI. The linear mixing model reads $f^\delta = \op u^\dagger + \eta$, where $u^\dagger \in \mathbb{R}^{N_c \times N_h \times N_w}$ is the ground-truth abundance map, $f^\delta \in \mathbb{R}^{N_b \times N_h \times N_w}$ contains the noisy spectral observations, and $\eta$ is measurement noise. The forward operator $\op \colon \mathbb{R}^{N_c \times N_h \times N_w} \to \mathbb{R}^{N_b \times N_h \times N_w}$ applies the endmember signature matrix $E \in \mathbb{R}^{N_b \times N_c}$ pixel-wise, i.e., $[\op u]_{b,i,j} = \sum_{c=1}^{N_c} E_{b,c}\, u_{c,i,j}$.

Physical constraints require the abundance vector at each pixel to be non-negative and to sum to one, confining it to the $N_c$ - dimensional probability simplex
\begin{equation} \label{eq:pavia_simplex}
    \Delta^{N_c} := \left\{ u_{i,j} \in \mathbb{R}^{N_c} \;\middle|\; \sum_{c=1}^{N_c} u_{c,i,j} = 1, \; u_{c,i,j} \ge 0 \;\; \forall\, c \right\}.
\end{equation}

\paragraph{Framework instantiation}
We work in the setting of Section~\ref{ex:entropic_mirror_example}, with the specific choices
\begin{itemize}
    \item $\X = \mathbb{R}^{N_c \times N_h \times N_w}$, $\Y = \mathbb{R}^{N_b \times N_h \times N_w}$;
    \item $R(u) = i_{\Delta^{N_c}}(u)$ (per-pixel simplex indicator);
    \item $G_\alpha(u) = \frac{\alpha}{2}\|\nabla u\|_2^2$, with $\nabla$ the standard 2D forward finite-difference operator and $\alpha \in \{0,1\}$, which acts as a smooth spatial bias rather than the regularizer itself - regularization is achieved via early stopping or the Morozov discrepancy constraint;
    \item $\varphi(u) = \sum_{c,i,j}(u_{c,i,j}\log u_{c,i,j} - u_{c,i,j})$ (negative Boltzmann-Shannon entropy), so that $D_\varphi$ becomes the Kullback-Leibler divergence;
    \item $\psi(v) = \tfrac{1}{2}\langle \Sigma^{-1} v, v\rangle$ (quadratic dual reference with step-size operator $\Sigma$);
    \item $H_{f^\delta} = i_{\{f^\delta\}}$ (equality constraint) or $H_{f^\delta} = i_{C^\delta}$ with $C^\delta = \{z : \|f^\delta - z\|_2 \le \delta\}$ (Morozov discrepancy constraint).
\end{itemize}
Under the entropic reference $\varphi$ and the simplex indicator $R$, the primal mirror step reduces to a multiplicative softmax update, i.e.,
\begin{equation} \label{eq:pavia_primal_update}
    u_{c,i,j}^{k+1} = \frac{u_{c,i,j}^k \exp\bigl(-\tau [\op^* v^k + G'_\alpha(u^k)]_{c,i,j}\bigr)}{\sum_{c'=1}^{N_c} u_{c',i,j}^k \exp\bigl(-\tau [\op^* v^k + G'_\alpha(u^k)]_{c',i,j}\bigr)},
\end{equation}
which maintains simplex feasibility by construction and avoids an explicit projection step. For numerical stability, the exponent is shifted by its channel-wise maximum before exponentiation, and iterates are clamped to a floor of $10^{-15}$ before renormalization. The dual reference $\psi$ yields either a preconditioned linear update (when $\Sigma$ is diagonal) or the standard Euclidean update (when $\Sigma = \sigma I$). The resulting algorithms -Algorithms~\ref{alg:entropic_unmixing_equality} and~\ref{alg:entropic_unmixing_morozov} - are the entropic counterparts of Algorithm~\ref{Condat-Vu-al-4} and Algorithm~\ref{Condat-Vu-alg-6}, respectively.
\begin{algorithm}[H]
\caption{Entropic mirror descent for equality constrained spectral unmixing}
\label{alg:entropic_unmixing_equality}
\begin{algorithmic}[1]
\State \textbf{Input:} Initial values $u^0, v^0$, step size $\tau > 0$, dual step size operator $\Sigma$ (diagonal matrix or scalar $\sigma$), data $f^\delta$.
\For{$k = 0, 1, 2, \dots$}
\State Compute the primal update for each pixel $(i, j)$ and channel $c$:
\begin{equation*}
    u_{c, i, j}^{k+1} = \frac{u_{c, i, j}^k \exp\left( -\tau [ \op ^* v^k + G'_\alpha(u^k) ]_{c, i, j} \right)}{\sum_{c'=1}^{N_c} u_{c', i, j}^k \exp\left( -\tau [ \op ^* v^k + G'_\alpha(u^k) ]_{c', i, j} \right)}
\end{equation*}
\State Compute the dual update:
$
    v^{k+1} = v^k + \Sigma (\op (2u^{k+1} - u^k) - f^\delta)
$
\EndFor
\State \textbf{return} $u^{k+1}, v^{k+1}$
\end{algorithmic}
\end{algorithm}

\begin{algorithm}[H]
\caption{Entropic mirror descent for Morozov-constrained spectral unmixing}
\label{alg:entropic_unmixing_morozov}
\begin{algorithmic}[1]
\State \textbf{Input:} Initial values $u^0, v^0$, step size $\tau > 0$, dual step size $\sigma > 0$, data $f^\delta$, noise level $\delta$.
\For{$k = 0, 1, 2, \dots$}
\State Compute the primal update for each pixel $(i, j)$ and channel $c$:
\begin{equation*}
    u_{c, i, j}^{k+1} = \frac{u_{c, i, j}^k \exp\left( -\tau [ \op ^* v^k + G'_\alpha(u^k) ]_{c, i, j} \right)}{\sum_{c'=1}^{N_c} u_{c', i, j}^k \exp\left( -\tau [ \op ^* v^k + G'_\alpha(u^k) ]_{c', i, j} \right)}
\end{equation*}
\State Compute the auxiliary dual variable:
$
    z^k = v^k + \sigma (\op (2u^{k+1} - u^k) - f^\delta)
$
\State Compute the dual update via soft-thresholding:
\begin{equation*}
    v^{k+1} = \begin{cases} 0 & \text{if } \|z^k\|_2 \le \sigma \delta, \\ \left(1 - \frac{\sigma \delta}{\|z^k\|_2}\right) z^k & \text{if } \|z^k\|_2 > \sigma \delta \end{cases}
\end{equation*}
\EndFor
\State \textbf{return} $u^{k+1}, v^{k+1}$
\end{algorithmic}
\end{algorithm}
\clearpage
\begin{figure}[tbp]
    \centering
    \includegraphics[width=0.5\textwidth]{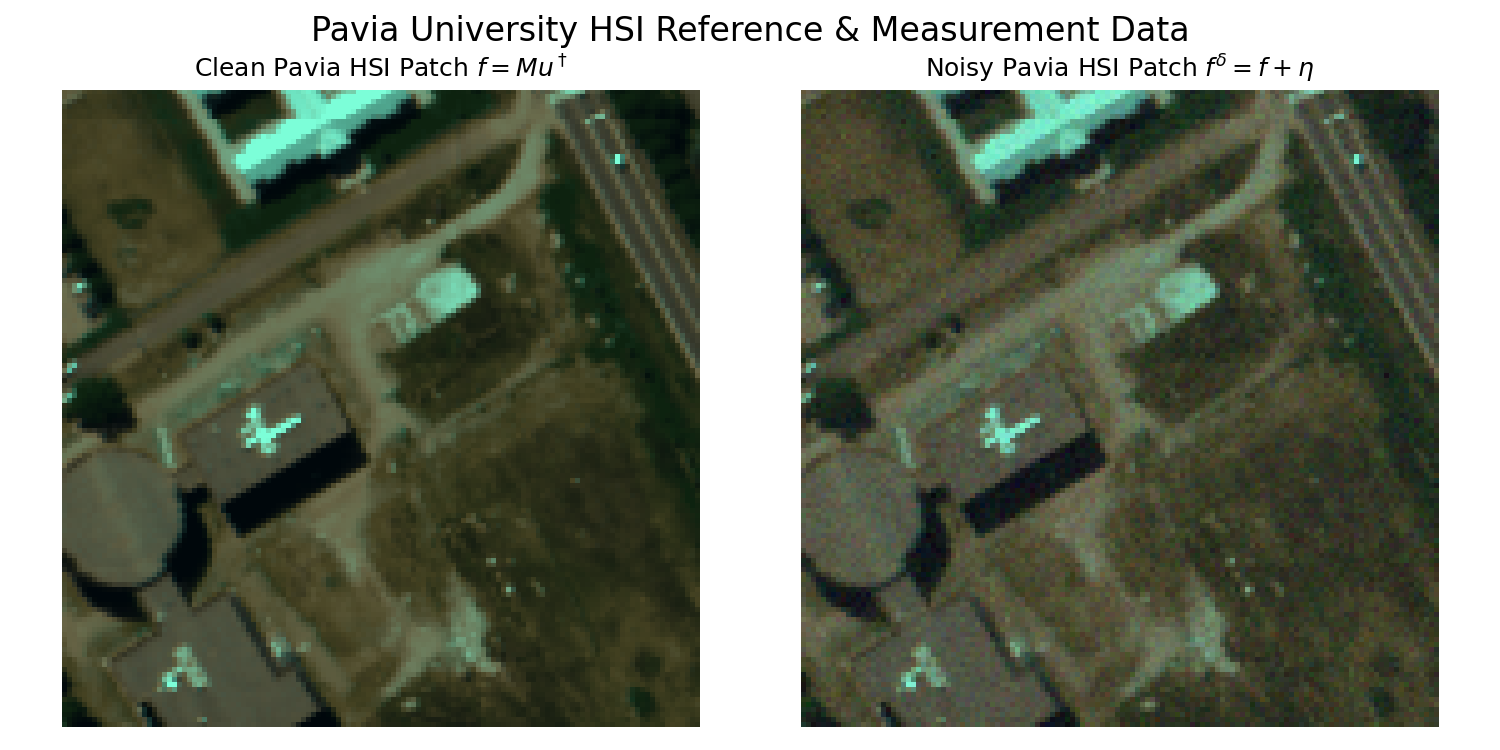}
    \caption{Left: clean spatial patch $f$. Right: noisy spatial patch $f^\delta$ with 1\% Gaussian noise. Both panels are false-color RGB composites (bands 55, 30, 5).}
    \label{fig:pavia_reference_data}
    \vspace{10pt}
    \includegraphics[width=1.0\textwidth]{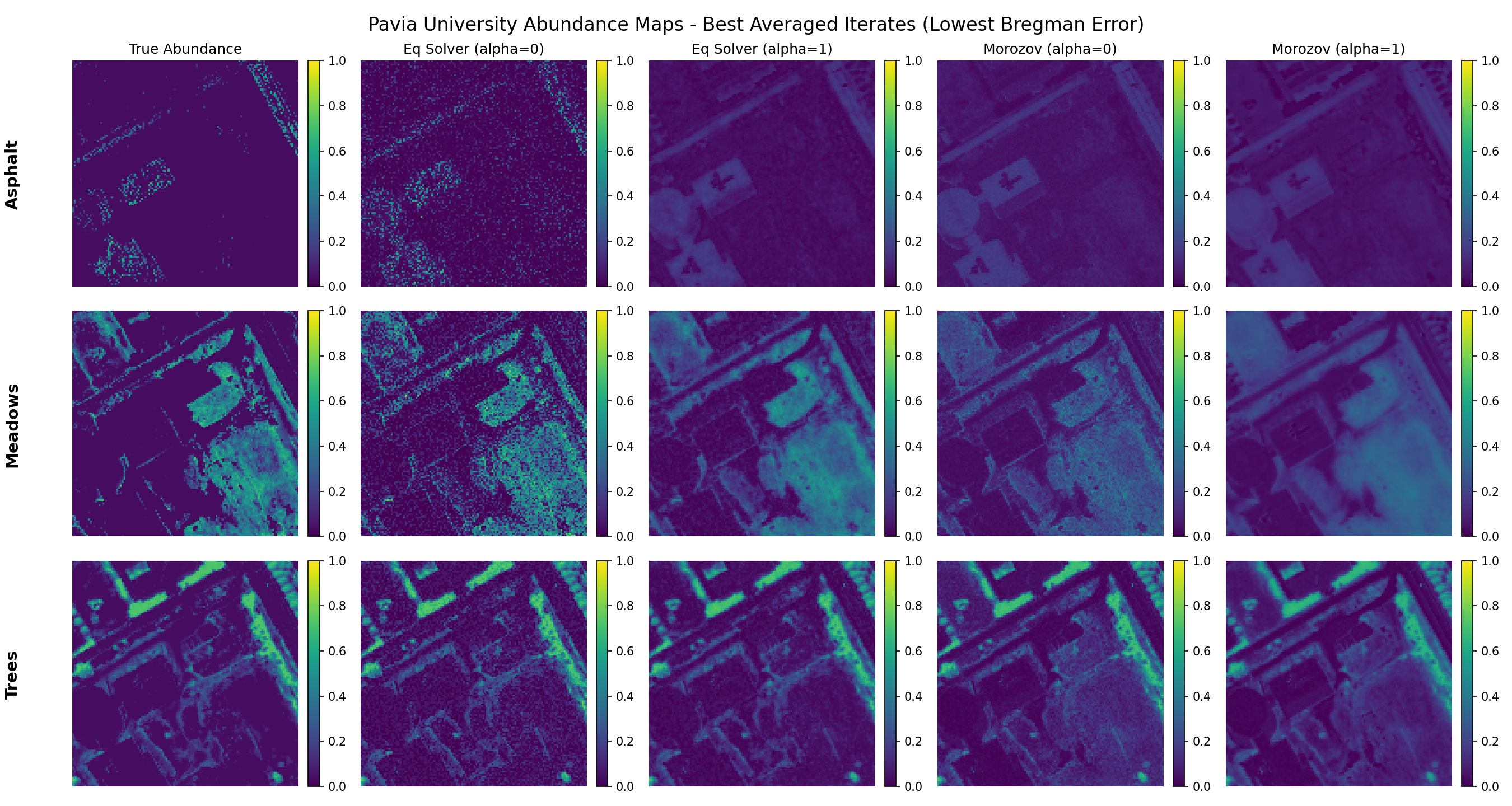}
    \caption{Abundance maps (best averaged iterates, minimizing $D_{R+G_\alpha}$). Rows: Asphalt, Meadows, Trees. Columns: ground truth, equality solver ($\alpha = 0$, $\alpha = 1$), Morozov solver ($\alpha = 0$, $\alpha = 1$).}
    \label{fig:pavia_abundances_comparison_avg}
    \vspace{10pt}
    \includegraphics[width=1.0\textwidth]{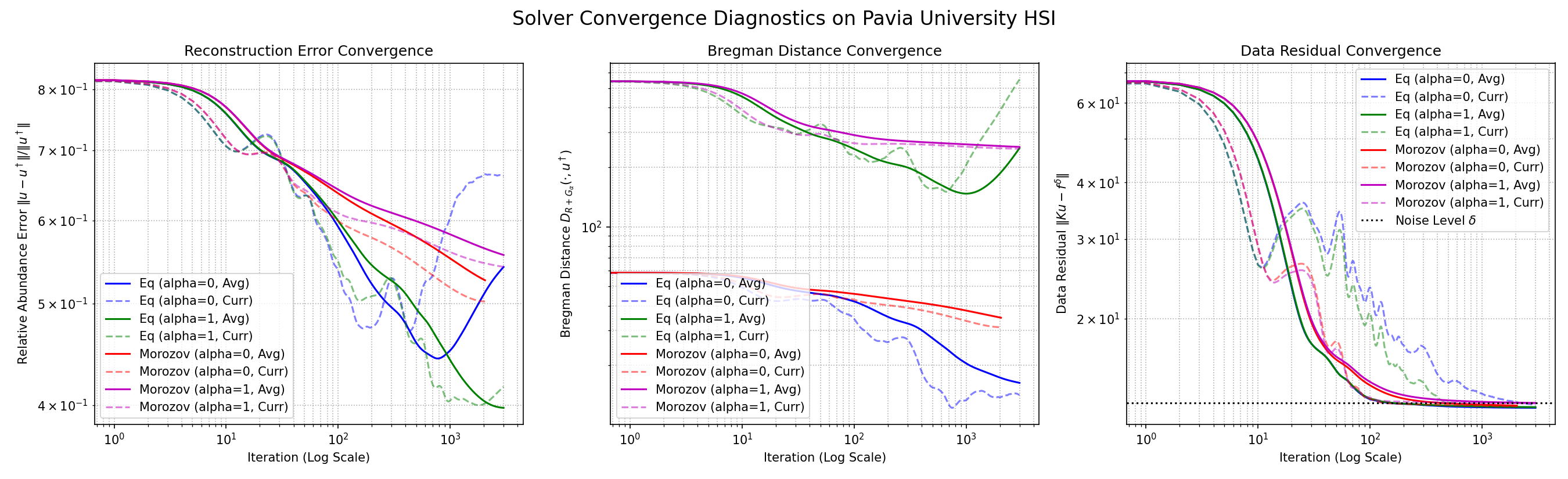}
    \caption{Left: relative error $\|u - u^\dagger\|_2/\|u^\dagger\|_2$. Middle: $D_{R+G_\alpha}(\cdot, u^\dagger)$. Right: $\|\op u - f^\delta\|_2$ (dotted line: noise level $\delta$). Solid: averaged iterates; dashed: current iterates; horizontal axis: log scale.}
    \label{fig:pavia_convergence_curves}
\end{figure}
\clearpage

\paragraph{Connection to the error bounds}
Because $R = i_{\Delta^{N_c}}$ and both $\bar{u}^n, u^\dagger \in \Delta^{N_c}$, the Bregman distance from Theorem~\ref{main-th-gen} simplifies to
\begin{equation} \label{eq:pavia_bregman_simplification}
    D_{R+G_\alpha}(\bar{u}^n, u^\dagger) = G_\alpha(\bar{u}^n) - G_\alpha(u^\dagger) + \langle \op^* v^\dagger, \bar{u}^n - u^\dagger \rangle,
\end{equation}
where $v^\dagger$ denotes the optimal dual variable from the source condition. In the present setting, we have $p = q = 2$ (via Pinsker's inequality for the primal $\ell^1$-norm and the Euclidean structure of the dual space), so the noise propagation term in the bound~\eqref{estim:non-acc-gen} grows as $O(\delta^2 n)$. This linear growth explains the  behavior observed in both formulations: The iterates initially approach $u^\dagger$ but then deteriorate as noise is amplified. Under equality constraints, the error eventually diverges. Under the Morozov constraint, the soft-thresholding of the dual variable bounds the noise propagation and the error plateaus near the regularized solution $u^*$ satisfying $\|\op u^* - f^\delta\| \le \delta$. In both cases, early stopping remains beneficial.

\paragraph{Experimental setup}
We use a $128 \times 128$ spatial patch of the Pavia University HSI dataset~\cite{83h4-0556-25}, which contains $N_b = 103$ spectral bands and $N_c = 9$ material classes. The endmember signature matrix $E \in \mathbb{R}^{N_b \times N_c}$ is formed by averaging the globally normalized spectral profiles over all annotated pixels of each class; the resulting matrix has condition number $\approx 1812$ under the spectral norm.

A ground-truth abundance map $u^\dagger$ is obtained by running Algorithm~\ref{alg:entropic_unmixing_equality} for $20{,}000$ iterations on the clean patch with diagonal dual preconditioning ($\sigma_b = \sigma_0 / \sum_{c} E_{b,c}$, $\sigma_0 = 1$), which accelerates convergence for the ill-conditioned $E$. The primal step size is chosen to satisfy the inequality $\tau_{\mathrm{pre}} \max_c \sum_b \sigma_b E_{b,c}^2 < 1$.  After adding a baseline offset of $0.05$ and renormalizing (so that $u^\dagger$ lies strictly in the simplex interior), we generate noisy data $f^\delta = \op u^\dagger + \eta$ with $1\%$ additive Gaussian noise and record the noise level $\delta = \|f^\delta - f\|_2$.

For the comparison runs ($3000$ iterations on $f^\delta$), diagonal preconditioning is no longer available because the Morozov dual projection lacks a closed-form solution when $\Sigma$ is non-scalar. We therefore set $\Sigma = \sigma I$ with $\sigma = 0.5$ and choose
\begin{equation} \label{eq:pavia_step_size}
    \tau = \frac{0.99}{\sigma \max_c \sum_{b=1}^{N_b} E_{b,c}^2 + \alpha L},
\end{equation}
where $L = 8.0$ is the Lipschitz constant of $G_1'$, in accordance with the step-size condition of Lemma~\ref{lem-str-conv}. We compare four configurations: equality and Morozov constraints, each with $\alpha = 0$ (no spatial bias) and $\alpha = 1$.

\paragraph{Results}
Figure~\ref{fig:pavia_reference_data} shows the reference and noisy data. The reconstructed abundance maps for the averaged iterates (at the iteration minimizing the Bregman distance error) are displayed in Figure~\ref{fig:pavia_abundances_comparison_avg} for three representative material classes; the corresponding current iterates are provided in Figure~\ref{fig:pavia_abundances_comparison_curr} of Appendix~\ref{appendix:figures}. The convergence diagnostics - relative error $\|u - u^\dagger\|_2/\|u^\dagger\|_2$, Bregman distance $D_{R+G_\alpha}(\cdot, u^\dagger)$, and data residual $\|\op u - f^\delta\|_2$ are plotted in Figure~\ref{fig:pavia_convergence_curves}. The equality constrained runs exhibit classical semi-convergence, while the Morozov-constrained runs stabilize once the discrepancy reaches $\delta$. Including the smooth bias ($\alpha = 1$) visibly reduces noise artifacts in the recovered abundance maps.

\section*{Conclusions}
In this work, we establish a regularization theory for certain primal–dual splitting algorithms applied to ill-posed inverse problems with noisy data. In the Hilbert space setting, we derive error estimates for both the Condat--Vũ and accelerated PDHG methods under a standard source condition. We carry over the analysis of the non-accelerated Condat--Vũ algorithm to Banach spaces and develop an entropic PDHG instance in a natural non-reflexive setting. The proposed framework allows for a general class of convex data fidelities, with its assumptions explicitly verified for both equality-constrained problems and Morozov regularization. Last but not least, numerical experiments on a sparse recovery problem and hyperspectral abundance map recovery confirm the theoretical results and demonstrate the applicability of the considered methods.

While our analysis is restricted to problems affected by additive noise, extending the framework to non-additive noise models is an interesting direction for future research. Moreover, investigating accelerated schemes with other relevant choices of step sizes remains an open problem.

\section*{Disclosure statement}
The authors report there are no competing interests to declare.

\section*{Declaration of generative AI use}
During the preparation of this manuscript, the authors used ChatGPT-5.5, Gemini 3.6 and Grammarly Pro  for linguistic revision, and assistance with code development for the numerical experiments. The authors edited and verified all text and experimental code, and take full responsibility for the content of the final work.

\section*{Acknowledgements}
The research of Diana-Elena Mirciu and Elena Resmerita was funded in whole or in part by the Austrian Science Fund (FWF) 10.55776/PAT4084924.

\appendix
\section{Appendix}\label{appendix:proofs}

\subsection{Proof of Theorem \ref{lemma-Schmidt-gen}}\label{appendix:proof-aux-result}
\begin{proof}
    Let $n\geq1$ be fixed. The assumptions of the theorem imply that for any $k\in\{1,\dots, n\}$, 
    $
        a_k^q \le S_k + \sum_{i=1}^k \lambda_i a_i.
    $
    Let us denote by $b_n:=\max\{a_k:1\leq k\leq n\}$. The sequence $(S_n)$ is non-decreasing and $(\lambda_n)\subset[0,+\infty)$, thus
    $
        a_k^q \le S_n + b_n\sum_{i=1}^k \lambda_i. 
    $
    From the non-negativity of the sequences $(a_n)$ (which implies that $(b_n)\subset[0,+\infty)$) and $(\lambda_n)$, one finds
    $
        a_k^q \le S_n + b_n\sum_{i=1}^n \lambda_i. 
    $
    The previous inequality holds for $k$ arbitrarily fixed in $\{1,\dots, n\}$, therefore
    $
        b_n^q \le S_n + b_n\sum_{i=1}^n \lambda_i. 
    $
    Now we use Young inequality in the form $ab\leq \frac{a^p}{p} + \frac{b^q}{q}$ for the second term on the right-hand side to derive
    $
        b_n^q \le S_n + \frac{b_n^q}{q} + \frac{\left(\sum_{i=1}^n \lambda_i\right)^p}{p}, 
    $
    or, equivalently,
   $
        \frac{q-1}{q}b_n^q \le S_n + \frac{\left(\sum_{i=1}^n \lambda_i\right)^p}{p}. 
    $
    This implies 
    $
        a_n\leq b_n \leq \left(\frac{q}{q-1}S_n + \frac{q}{q-1} \frac{\left(\sum_{i=1}^n \lambda_i\right)^p}{p}\right)^{1/q},
    $
    where the definition of $b_n$ gives the first inequality.
    Note that $\frac{q}{(q-1)p}=1$, which establishes the conclusion.
\end{proof}
\subsection{Additional figures - Comparison of the equality constrained formulation with the Morozov regularization}\label{appendix:figures}

\begin{figure}[H]
    \centering
    \includegraphics[width=0.8\textwidth]{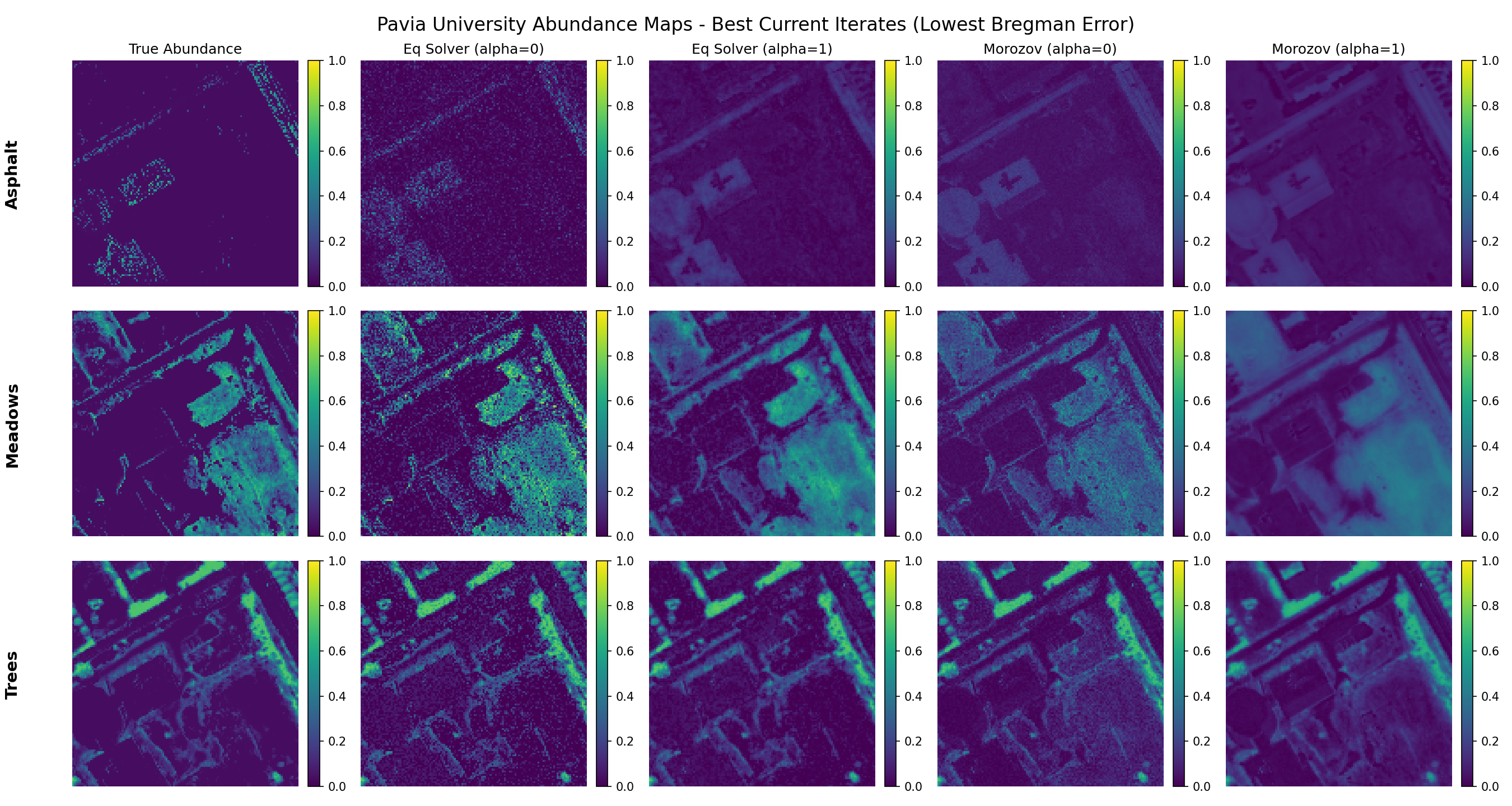}
    \caption{Abundance maps (best current iterates, minimizing $D_{R+G_\alpha}$). Rows: Asphalt, Meadows, Trees. Columns: ground truth, equality solver ($\alpha = 0$, $\alpha = 1$), Morozov solver ($\alpha = 0$, $\alpha = 1$).}
    \label{fig:pavia_abundances_comparison_curr}
\end{figure}

\begin{figure}[H] 
    \centering
    \includegraphics[width=1.0\textwidth]{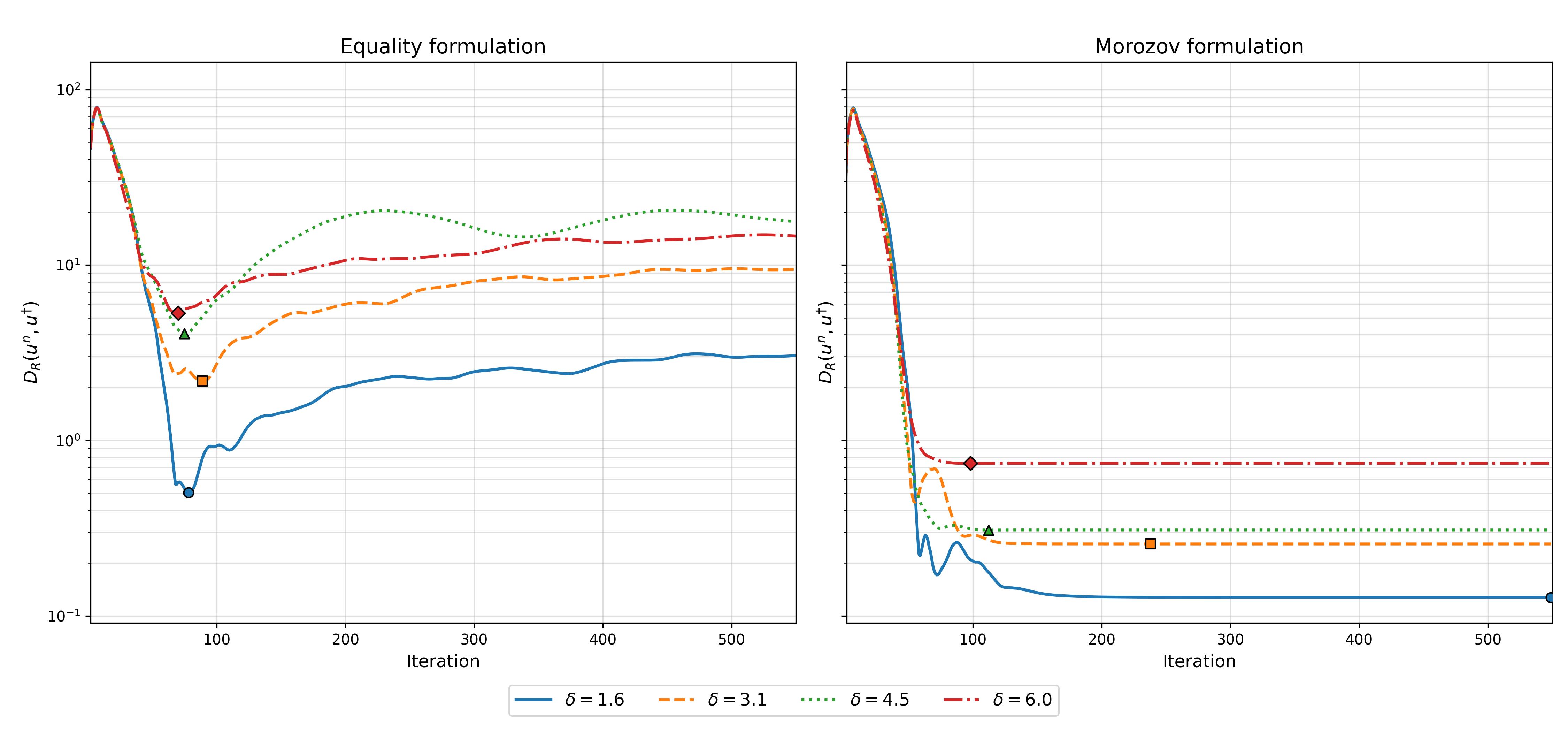}
    \caption{Comparison of the error estimate for the current iterates at different noise levels}
    \label{fig:comparison-current-different-noise-levels}
\end{figure}

\begin{figure}[H] 
    \centering
    \includegraphics[width=0.7\textwidth]{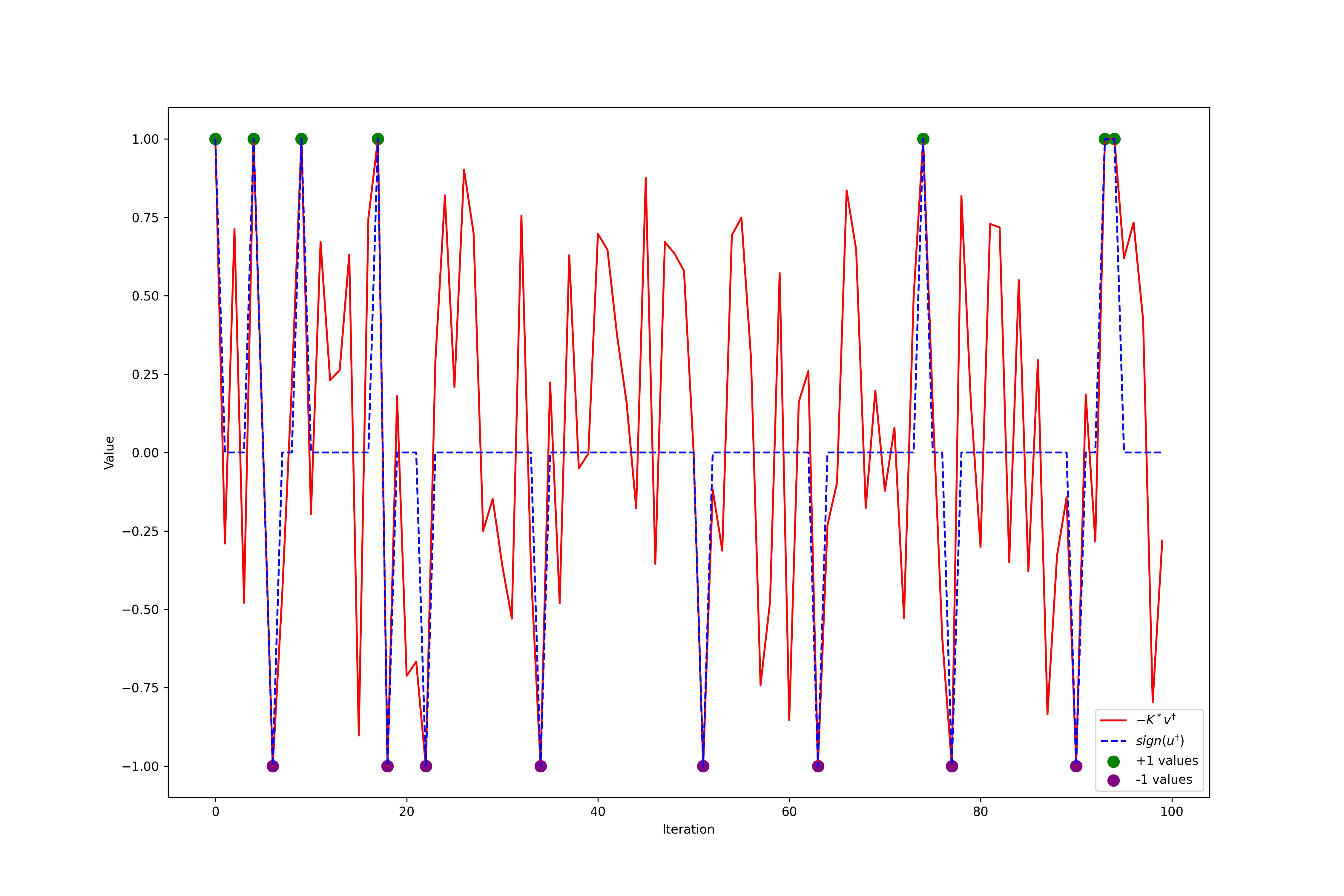}
    \caption{Comparison between $-\op ^*v\de$ and the sign of $u\de$, indicating that the estimated source element $v\de$ is correct.}
    \label{fig:comparison-sc-eq}
\end{figure}

\printbibliography
\end{document}